\tikzset{dynkdot/.style={circle,draw,scale=.38}}
\numberwithin{equation}{section}
\newcommand{\nc}{\newcommand}
\renewcommand{\le}{\leqslant}
\renewcommand{\ge}{\geqslant}
\nc{\op}{\operatorname}
\theoremstyle{plain}
\newtheorem{lemma}{Lemma}[section]
\newtheorem{proposition}[lemma]{Proposition}
\newtheorem{theorem}[lemma]{Theorem}
\newtheorem*{maintheorem}{Main Theorem}
\theoremstyle{definition}
\newtheorem{remark}[lemma]{Remark}
\newtheorem{example}[lemma]{Example}
\newtheorem{definition}[lemma]{Definition}
\newtheorem{corollary}[lemma]{Corollary}
\newtheorem{convention}[lemma]{Convention}
\nc{\Prop}{\begin{proposition}}
\nc{\enprop}{\end{proposition}}
\nc{\Lemma}{\begin{lemma}}
\nc{\enlemma}{\end{lemma}}
\nc{\Cor}{\begin{corollary}}
\nc{\encor}{\end{corollary}}
\nc{\Rem}{\begin{remark}}
\nc{\enrem}{\end{remark}}
\nc{\Def}{\begin{definition}}
\nc{\edf}{\end{definition}}
\nc{\shc}{\mathcal{C}}
\newcommand{\Q}{\mathbb{Q}}
\newcommand{\C}{\mathbb{C}}
\newcommand{\Seq}{\Sigma}
\newcommand{\dT}{\mathrm{T}}
\newcommand{\Ca}{\mathscr{C}}
\nc{\F}{\mathcal{F}}
\newcommand{\D}{\mathscr{D}}
\nc{\HOM}{\on{H\textsc{om}}}
\newcommand{\M}{\mathrm{M}}
\newcommand{\W}{\mathsf{W}}
\newcommand{\Z}{\mathbb{\ms{1mu}Z}}
\newcommand{\seteq}{\mathbin{:=}}
\newcommand{\hd}{{\operatorname{hd}}}
\newcommand{\soc}{{\operatorname{soc}}}
\nc{\ov}[1]{\overline{#1}}
\nc{\Wlmj}[3]{\W_{#2,#3}^{(#1)}}
\nc{\Mkl}[2]{\M_\ttww(#1,#2)}
\nc{\rmat}[1]{{\mathbf r}_{\mspace{-2mu}\raisebox{-.5ex}{${\scriptstyle{#1}}$}}}
\newcommand{\on}{\operatorname}
\nc{\de}{\on{\textfrak{d}}}
\nc{\tL}{\widetilde{\Lambda}}
\nc{\tl}{\widetilde{\lambda}}
\nc{\mqs}{(-q^2)}
\nc{\Cquiver}{\upsigma}
\nc{\mut}[1]{{\mu}_{\mspace{-2mu}\raisebox{-.5ex}{${\scriptstyle{#1}}$}}}
\newcommand{\g}{{\mathfrak{g}}}
\newcommand{\h}{{\mathfrak{h}}}
\newcommand{\n}{{\mathfrak{n}}}
\newcommand{\isoto}[1][]{\mathop{\xrightarrow%
[{\raisebox{.3ex}[0ex][.3ex]{$\scriptstyle{#1}$}}]%
{{\raisebox{-.6ex}[0ex][-.6ex]{$\mspace{2mu}\sim\mspace{2mu}$}}}}}
\newcommand{\longisoto}[1][]{\mathop{\xrightarrow%
[{\raisebox{.3ex}[0ex][.3ex]{$\scriptstyle{\hs{1ex}#1\hs{1ex}}$}}]%
{{\raisebox{-.6ex}[0ex][-.6ex]{$\hs{1ex}\sim\hs{1ex}$}}}}}
\newcommand{\id}{\on{id}}
\newcommand{\soplus}{\mathop{\mbox{\normalsize$\bigoplus$}}\limits}
\newcommand{\sotimes}{\mathop{\mbox{\normalsize$\bigotimes$}}\limits}
\newcommand{\ww}{ \textbf{\textit{w}}}
\newcommand{\ttww}{{\widetilde{\ww}} }
\nc{\nconv}{\mathop{\mbox{\large $\odot$}}}
\nc{\nnconv}{\mathop{\mbox{\large $\star$}}}
\nc{\lb}{\llbracket}
\nc{\rb}{\rrbracket}
\newcommand{\ko}{{{\mathbf{k}}}}
\nc{\la}{\lambda}
\nc{\La}{\Lambda}
\nc{\tLa}{\widetilde{\Lambda}}
\nc{\ve}{\varepsilon}
\nc{\ep}{\epsilon}
\nc{\vp}{\varphi}
\nc{\lan}{\langle}
\nc{\ran}{\rangle}
\nc{\Uqg}{U_q(\g)}
\nc{\Aqg}{A_q(\g)}
\nc{\Aqn}{A_q(\n)}
\nc{\al}{\alpha}
\nc{\be}{\beta}
\nc{\ga}{\gamma}
\nc{\wt}{\operatorname{wt}}
\nc{\ch}{\operatorname{ch}}
\nc{\norm}{{\mathrm{norm}}}
\nc{\aff}{{\mathrm{aff}}}
\nc{\Maf}{M_\aff}
\nc{\ev}{{\mathrm{even}}}
\nc{\od}{{\mathrm{odd}}}
\nc{\Sev}{\Seq^{\ev}}
\nc{\Sod}{\Seq^{\od}}
\nc{\Spl}{\Seq^{+}}
\nc{\Smi}{\Seq^{-}}
\nc{\low}{{\mathrm{low}}}
\nc{\upper}{{\mathrm{up}}}
\nc{\one}{{\bf{1}}}
\nc{\To}[1][{\hspace{2ex}}]{\xrightarrow{\,#1\,}}
\nc{\te}{\tilde{e}}
\nc{\tw}{{\widetilde{w}}}
\nc{\tww}{\ww}
\nc{\tuu}{{\mathsf{u}}}
\nc{\tel}{\tilde{e}^\low}
\nc{\teu}{\tilde{e}^\upper}
\nc{\tf}{\tilde{f}}
\nc{\tfl}{\tilde{f}^\low}
\nc{\tfu}{\tilde{f}^\upper}
\nc{\tE}{\widetilde{E}}
\nc{\tF}{\widetilde{F}}
\nc{\tFF}{\widetilde{\F}}
\nc{\tB}{\widetilde{B}}
\nc{\tz}{\tilde{z}}
\nc{\tQ}{\hspace{-.2ex}\textbf{\textit{Q}}}
\nc{\tb}{\tilde{b}}
\nc{\Ft}{\F^\dT}
\nc{\Seed}{\mathcal{S}}%for an algebra
\newenvironment{rouge}
{\color{red}}
{}
\nc{\er}{\end{rouge}}
\newcommand{\berm}{\begin{rouge}{}\marginnote{\fbox{\scshape\lowercase{M}}}
{}}
\newcommand{\bero}{\begin{rouge}{}\marginnote{\fbox{\scshape\lowercase{O}}}
{}}
\newenvironment{blue}
{\color{Dandelion}}
{}
\nc{\beb}{\begin{blue}}
\nc{\eb}{\end{blue}}
\newenvironment{bluk}
{\relax\color{blue}}
{\hspace*{.5ex}\relax}
\newcommand{\bek}{\begin{bluk}}
\newcommand{\ek}{\end{bluk}}
\nc{\cor}{\mathbf{k}}
\nc{\tens}{\mathop\otimes}
\nc{\gmod}{\mbox{-$\mathrm{gmod}$}}
\nc{\md}{\mbox{-$\mathrm{mod}$}}
\nc{\uqm}{\mathscr{C}_\g}
\nc{\gMod}{\mbox{-$\mathrm{gMod}$}}
\nc{\proj}{\mbox{-$\mathrm{proj}$}}
\nc{\gproj}{\mbox{-$\mathrm{gproj}$}}
\nc{\smod}{\mbox{-$\mathrm{mod}$}}
\nc{\nmod}{\mbox{-$\mathrm{nilmod}$}}
\nc{\seed}{\mathscr{S}}%for a monoidal seed
\newcommand{\cmA}{\mathsf{A}}
\nc{\Rnorm}{R^{\mathrm{norm}}}
\nc{\Runiv}{R^{\ms{1mu}\mathrm{univ}}}
\nc{\Rren}{R^{\ms{1mu}\mathrm{ren}}} %renormalized R-matrix with spectral parameter
\nc{\col}{\colon}
\nc{\epiTo}[1][]{\xymatrix{\ar@{->>}[r]^-{{#1}}&}}
\nc{\epito}{\twoheadrightarrow}
\nc{\monoTo}[1][]{\xymatrix{\ar@{>->}[r]^-{{#1}}&}}
\nc{\monogets}[1][]{\xymatrix{&\ar@{_{(}->}[l]^-{{#1}}}}
\nc{\sym}{\mathfrak{S}}
\nc{\rl}{\mathsf{Q}}
\nc{\prl}{\rl_+}
\nc{\crl}{\mathsf{Q}^\vee}
\nc{\pcrl}{\crl_+}
\nc{\Qq}{{\Q(q)}}
\nc{\wl}{\mathsf{P}}   % weight lattice
\nc{\Oint}{\mathcal{O}_{{\mathrm{int}}}}
\newcommand{\scbul}{{\,\raise1pt\hbox{$\scriptscriptstyle\bullet$}\,}}
\nc{\conv}{\mathop{\mathbin{\mbox{\large $\circ$}}}}
\newcommand{\hconv}{\mathbin{\scalebox{.9}{$\nabla$}}}
\newcommand{\sconv}{\mathbin{\scalebox{.9}{$\Delta$}}}
\nc{\pv}{  \to\updownarrow\gets }
\nc{\nv}{  \longleftrightarrow {\raise -1pt\hbox{$\hspace{-2ex}\begin{matrix}\downarrow \\[-1ex] \uparrow\end{matrix}$}} }
\newcommand{\Hom}{\operatorname{Hom}}
\renewcommand{\Im}{\op{Im}}
\newcommand{\ex}{{\mathrm{ex}}}
\nc{\K}{{K}}
\nc{\Kex}{{\K}^{\mathrm{ex}}}
\nc{\Uex}{\Uppsi_{\mathrm{ex}}}
\nc{\Kfr}{{\K}^{\mathrm{f\mspace{.01mu}r}}}
\nc{\cl}{{\mathrm{cl}\ms{1mu}}}
\nc{\ben}{\begin{enumerate}}
\nc{\ee}{\end{enumerate}}
\nc{\bnum}{\begin{enumerate}[{\rm(i)}]}
\nc{\bna}{\begin{enumerate}[{\rm(a)}]}
\nc{\bc}{\begin{cases}}
\nc{\ec}{\end{cases}}
\newenvironment{myequation}
{\relax\setlength{\arraycolsep}{1pt}\begin{eqnarray}}
{\end{eqnarray}}
\newenvironment{myequationn}
{\relax\setlength{\arraycolsep}{1pt}\begin{eqnarray*}}
{\end{eqnarray*}}
\nc{\eq}{\begin{myequation}}
\nc{\eneq}{\end{myequation}}
\nc{\eqn}{\begin{myequationn}}
\nc{\eneqn}{\end{myequationn}}
\nc{\hs}{\hspace*}
\newenvironment{myarray}[1]{\relax\setlength{\arraycolsep}{.5pt}

\begin{array}{#1}}{\end{array}\relax}
\newcommand{\ba}{\begin{myarray}}
\newcommand{\ea}{\end{myarray}}
\nc{\noi}{\noindent}
\nc{\ang}[1]{\langle{#1}\rangle}
\nc{\fr}{{\mathrm{fr}}}
\nc{\qt}[1]{\quad\text{#1}\quad}
\nc{\ol}{\overline}
\nc{\true}{\delta}
\nc{\ms}{\mspace}
\renewcommand{\mod}{\ms{3mu}\mathbin{\mathrm{mod}}\ms{1mu}}
\nc{\vs}{\vspace*}
\nc{\bl}{\bigl(}
\nc{\br}{\bigr)}
\nc{\bep}{\ol{\ep}}
\nc{\bal}{\,\ol{\al}}
\nc{\qtq}[1][{and}]{\quad\text{#1}\quad}
\nc{\set}[2]{\left\{{#1}\mid{#2}\right\}}
\nc{\ro}{{\rm(}}
\nc{\rf}{{\rm)}\xspace}
\nc{\Proof}{\begin{proof}}
\nc{\QED}{\end{proof}}
\nc{\monoto}[1][]{\xymatrix@C=2ex{\ar@{>->}[r]^-{{#1}}&}\ms{-8mu}}
\nc{\etens}{\boxtimes}
\nc{\height}[1]{\vert{#1}\vert}
\nc{\dsum}{\mathop\sum\limits}
\nc{\Lrev}{L^{\bek{\rev}\ek}}
\nc{\rev}{\mathrm{rev}}
\nc{\fw}{\Lambda}
\nc{\uqpg}{U_q'(\g)}
\nc{\tp}{\ms{1.5mu}{\widetilde{p}}\ms{2mu}}
\nc{\Deg}{\mathrm{Deg}}
\nc{\Bg}{\mathcal{G}}
\nc{\wb}[1]{\mbox{$\rule[-1.1ex]{0ex}{2ex}#1$}}
\nc{\bwr}{\mbox{\large$\wr$}}
\nc{\vphi}{\varphi}
\nc{\G}{\mathcal{G}}
\nc{\tD}{\widetilde{\mathrm{De}}\mathrm{g}}
\nc{\Li}{\La^\infty}
\nc{\Di}{\Deg^\infty}
\nc{\zero}{\ms{1mu}\mathrm{zero}\ms{1mu}}
\nc{\cwl}{\wl^\vee}
\nc{\rc}{renormalizing coefficient\xspace}
\nc{\cz}{{\cor[z^{\pm1}]}}
\nc{\ake}[1][2ex]{\rule[-.5ex]{0ex}{#1}}
\nc{\akew}[1][2ex]{\rule[-1ex]{#1}{0ex}}
\nc{\rd}{{}^*\ms{-3mu}}
\nc{\st}[1]{\{{#1}\}}
\nc{\corh}{\widehat{\cor}}
\nc{\czt}{\cz^\times}
\nc{\eps}{\varepsilon}
\nc{\rr}{rationally renormalizable\xspace}
\nc{\QHA}{\mathrm{QHA}}
\newenvironment{magem}{\relax\color{magenta}}{\relax}
\newcommand{\bema}{\begin{magem}}
\newcommand{\ema}{\end{magem}}
\newlength{\mylength}
\title[Monoidal categorification and quantum affine algebras]{Monoidal categorification and quantum affine algebras}
\author[M. Kashiwara]{Masaki Kashiwara}
\thanks{The research of M.\ Kashiwara
was supported by Grant-in-Aid for Scientific Research (B)
15H03608, Japan Society for the Promotion of Science.}
\address[M. Kashiwara]{
Kyoto University Institute for Advanced Study,
Research Institute for Mathematical Sciences, Kyoto University,
Kyoto 606-8502, Japan \& Korea Institute for Advanced Study, Seoul 02455, Korea }
\email[M. Kashiwara]{masaki@kurims.kyoto-u.ac.jp}
\author[M. Kim]{Myungho Kim}
\address[M. Kim]{Department of Mathematics, Kyung Hee University, Seoul 02447, Korea}
\email[M. Kim]{mkim@khu.ac.kr}
\thanks{The research of M.\ Kim was supported by the National Research Foundation of
Korea(NRF) Grant funded by the Korea government(MSIP) (NRF-2017R1C1B2007824).}
\author[S.-j. Oh]{Se-jin Oh}
\thanks{ The research of S.-j.\ Oh was supported by the Ministry of Education of the Republic of Korea and the National Research Foundation of Korea (NRF-2019R1A2C4069647).}
\address[S.-j. Oh]{Department of Mathematics, Ewha Womans University, Seoul 03760, Korea}
\email[S.-j. Oh]{sejin092@gmail.com}
\author[E. Park]{Euiyong Park}
\thanks{The research of E.\ P.\ was supported by the National Research Foundation of Korea(NRF) Grant funded by the Korea Government(MSIP)(NRF-2017R1A1A1A05001058).}
\address[E. Park]{Department of Mathematics, University of Seoul, Seoul 02504, Korea}
\email[E. Park]{epark@uos.ac.kr}
\keywords{Quantum affine algebra, Monoidal categorification, R-matrices, Cluster algebra}
\subjclass[2010]{17B37, 13F60, 18D10}
\date{\today}
\begin{document}

\begin{abstract}
We introduce and investigate new invariants on the pair of modules
$M$ and $N$ over quantum affine algebras $U_q'(\g)$ by analyzing
their associated $R$-matrices. From new invariants, we provide a
criterion for a monoidal category of finite-dimensional integrable
$U_q'(\g)$-modules to become a monoidal categorification of a
cluster algebra.
\end{abstract}

\maketitle
\tableofcontents

\section{Introduction}

For an affine Kac-Moody algebra $\g$, let $U_q'(\g)$ be the corresponding quantum affine algebra.
Since the category $\Ca_\g$ of finite-dimensional integrable representations over
$U_q'(\g)$ has a rich structure including rigidity, it has been intensively studied in various fields of mathematics and physics
(see \cite{AK,CP94,FR99,GV93,KS95,Nak01} for examples). In particular,  the representation theory for $\Ca_{\widehat{\mathfrak{sl}}_2}$ is well-understood:
every simple module in $\Ca_{\widehat{\mathfrak{sl}}_2}$ is isomorphic to a tensor product $S_1 \otimes S_2 \otimes \cdots \otimes S_r$ of simple modules,
called \emph{Kirillov-Reshetikhin} modules, satisfying that $S_i$ and $S_j$ \emph{are in general position} \cite{CP91}.
A simple object $S$ of a monoidal category is said to be \emph{real} if $S \otimes S$
is simple, and to be \emph{prime} if there exists no non-trivial factorization $S \simeq S_1 \otimes S_2$.
Since Kirillov-Reshetikhin modules over $U_q'(\widehat{\mathfrak{sl}}_2)$ are prime and real, every simple module in $\Ca_{\widehat{\mathfrak{sl}}_2}$
is real and can be expressed as a tensor monomial of prime real simple modules. However, these phenomena  cannot be expected for general $\g$.
In fact, there exist $\g$
 such that $\uqm$ contains  non-real simple modules~\cite{L03}.

The cluster algebras were introduced by Fomin and Zelevinsky in \cite{FZ02} for studying
the upper global bases of quantum groups \cite{K93,Lus93} and total positivity \cite{Lus94} in the viewpoint of combinatorics. Since their introduction,  numerous connections and applications have been discovered in
diverse fields of mathematics including representation theory, tropical geometry, integrable system and Poisson geometry (see \cite{BM06,FG06,FZ03,FN05,GSV}).

The representation theory of quantum affine algebras and the cluster algebras are connected by the notion of \emph{monoidal categorification},
introduced by  Hernandez-Leclerc in~\cite{HL10}.
A monoidal category $\shc$ is called a monoidal categorification of a cluster algebra $\mathscr{A}$ if it satisfies
\bna \item
the Grothendieck ring $K(\shc)$ of $\shc$ is isomorphic to $\mathscr{A}$ and
\item each cluster monomial of $\mathscr{A}$ corresponds to a real simple object in $\shc$, under the isomorphism. \ee
(This definition is weaker than the original one.)
Note that, by the \emph{Laurent phenomenon} of $\mathscr{A}$ (\cite{FZ02}), the \emph{Laurent positivity}  (proved by \cite{LS13} in a general setting) follows immediately,
if $\shc$ is a monoidal categorification of $\mathscr{A}$.

The notion of a monoidal categorification is extended in \cite{KKKO18} to quantum cluster algebras, a $q$-analogue of cluster algebras,
which were introduced by Berenstein and Zelevinsky in \cite{BZ05}.
Unlike cluster algebras, cluster variables are not commutative
but $q$-commutative, where the $q$-commutation relation is controlled by a skew-symmetric matrix $L$.
In \cite{GLS13},  Gei\ss, Leclerc and Schr{\"o}er showed that the quantum unipotent coordinate algebra $A_q(\mathfrak{n}(w))$, associated with a symmetric quantum group $U_q(\mathsf{g})$ and its Weyl
group element $w$, has a skew-symmetric
quantum cluster algebra structure  (see~\cite{GY16} for the non-symmetric case).
Using the quiver Hecke algebras $R$ introduced by Khovanov--Lauda \cite{KL09,KL11} and
Rouquier \cite{R08,R11} independently, the authors in \cite{KKKO18} introduced certain monoidal subcategory $\shc_w$ of the category  $R\gmod$ of finite-dimensional graded modules over $R$ and proved that $\shc_w$
gives a monoidal categorification for $A_q(\mathfrak{n}(w))$ in the following sense:
\begin{eqnarray} &&
\parbox{84ex}{
\begin{enumerate}[{\rm (i)}]
\item $\Z[q^{\pm 1/2}] \otimes_{\Z[q^{\pm1}]} K(\shc_w) \simeq A_{q^{1/2}}(\mathfrak{n}(w)) \seteq
\Z[q^{\pm 1/2}] \otimes_{\Z[q^{\pm1}]} A_q(\mathfrak{n}(w))$,
\item there exists a \emph{quantum monoidal seed} $\seed=(\{ V_i \}_{i \in K},L,\tB, D)$ in $\shc_w$, consisting of  a strongly commuting family $\{ V_i\}_{i\in\K}$ of real simple modules in $\shc_w$,
the $K\times K$ $\Z$-valued matrix $L=(-\Lambda(V_i,V_j))_{i,j\in K}$, an exchange matrix $\tB$ and a set $D$ of \emph{weights} of $V_i$'s in the root lattice of $\mathsf{g}$,
such that $[\seed]\seteq ( \{ q^{m_i}[V_i] \}_{i \in K}, L,\tB)$ is a quantum seed of $A_{q^{1/2}}(\mathfrak{n}(w))$ for some $m_i \in \tfrac{1}{2}\Z$,
\item $\seed$ \emph{admits successive mutations in all directions in $\Kex$}.
\end{enumerate}
}\label{eq: q-monoidal categorification}
\end{eqnarray}
Here $\Lambda(V,W)$ denotes the degree of the $R$-matrix $\rmat{V,W}$, constructed in \cite{KKK18A}, which is a distinguished homomorphism from $V \conv W$ to $W \conv V$,
where $V \conv W$ denotes the convolution product of $V$ and $W$
 (see \cite{KKKO18} for notations).
Note that the condition (iii) in~\eqref{eq: q-monoidal categorification} is
not easy to check since it is concerned with infinitely many mutations.
In the first part of \cite{KKKO18}, it was proved that the conditions (ii) and (iii) in~\eqref{eq: q-monoidal categorification}
are a consequence of the following condition:
\begin{enumerate}
\item[{\rm (ii$'$)}] there exists an \emph{admissible} monoidal seed $\seed=(\{ V_i \}_{i \in K},\tB)$ in $\shc_w$
such that $[\seed]\seteq ( \{ q^{m_i}[V_i] \}_{i \in K}, (-\Lambda(V_i,V_j))_{i,j\in K},\tB)$ is a quantum seed of $A_{q^{1/2}}(\mathfrak{n}(w))$ for some $m_i \in \tfrac{1}{2}\Z$ (see Definition~\ref{def:admissible}).
\end{enumerate}
Here, the admissibility means that the monoidal seed admits the first step mutations.
Thus, {\rm (ii$'$)} implies that, to achieve a monoidal categorification, it suffices to check the existence of such $M_k'$
only at the first mutation in each direction $k$.

On the whole flow of \cite{KKKO18}, the integer-valued invariants $\Lambda(V,W)$, $\tLa(V,W)$ and $\de(V,W)$, arising from the $\Z$-grading
structure of $R$ and defined in \cite{KKK18A,KKKO18}, provide important information in the representation theory of quiver Hecke algebra $R$.
To name a few,
(i) $\Lambda$ provides information about  whether the restriction of $R$-matrix $\rmat{V,W}$ to $V' \conv W'$ vanishes or not
for subquotients $V'$ and $W'$ of $V$ and $W$ respectively,
(ii) $\Lambda$  indicates the head and socle in the constituent of $V \conv W$,
(iii) $\tLa(V,W) \seteq \tfrac{1}{2}(\La(V,W)+(\wt(V),\wt(W)))$ measures the degree shifts of $V \conv W$ from the \emph{self-duality},
(iv) the non-negative integer $\de(V,W) \seteq  \tfrac{1}{2}(\La(V,W)+\La(W,V))$ tells whether $V \conv W$ is simple or not, corresponding to $\de(V,W)=0$ or $>0$,
under suitable assumptions on $V$ and $W$. However, in general, computing those values are quite difficult, and the second half of \cite{KKKO18}
is devoted to investigating several properties of those invariants.

 After the success in the quiver Hecke algebras setting, it is natural to ask a criterion for monoidal categorification for subcategories of  $\Ca_\g$.
There are monoidal subcategories $\Ca_N$ $(N \in \Z_{\ge1})$, $\Ca_\g^{-}$ and $\Ca_\g^{0}$ of $\Ca_\g$, introduced by Hernandez-Leclerc in \cite{HL10,HL16}
(see also \cite{KKOP19B} for $\Ca_\g^{0}$), whose Grothendieck rings $K(\shc)$ have cluster algebra structures, and which are conjectured to be monoidal categorifications of $K(\shc)$.
The conjecture for $\Ca_\g^{0}$ of affine types $A_n^{(t)}$ $(t=1,2)$ and $B_n^{(1)}$ is proved in \cite{KKOP19B} \emph{indirectly} by using
 generalized quantum Schur-Weyl duality constructed in \cite{KKK18A,KKKO15C,KKO18}, and for $\Ca_1$ and  $\Ca_N$ $(N \in \Z_{\ge1})$ of untwisted affine types $ADE$ are
 proved in \cite{HL10,HL13,Nak04} and \cite{Qin17} respectively, by approaches different from \cite{KKOP19B}.
However, by the lack of $\Z$-grading structure on $U_q'(\g)$, one can \emph{not} apply the framework in \cite{KKKO18} to those categories for monoidal categorifications directly (see also \cite[Section 4]{CW19}).

The aim of this paper is to give a criterion for a monoidal subcategory $\shc$ of $\Ca_\g$
to become a monoidal categorification of $K(\shc)$. We first introduce new invariants,
denoted also by $\Lambda(M,N)$, $\tLa(M,N)$, $\La^{\infty}(M,N)$ and $\de(M,N)$, for a pair of modules $M$ and $N$ in $\Ca_\g$, by analyzing $R$-matrices associated to $M \otimes N_z$.

We say that the universal $R$-matrix $$\Runiv_{M,N_z}
\col \cor((z))\tens_\cz(M\tens N_z)\to \cor((z))\tens_\cz(N_z\tens M)$$
 is \emph{rationally renormalizable} if there exists $c_{M,N}(z) \in \ko((z))^\times$ such that
$\Rren_{M,N_z} \seteq c_{M,N}(z)\Runiv_{M,N_z}$
sends $M\tens N_z$ to $N_z\tens M$.
In such a case, we can normalize $c_{M,N}(z) \in \ko((z))^\times$ (up to a multiple of $\ko[z^{\pm1}]^\times$)
such that $\Rren_{M,N_z}\big\vert_{z=x} \colon M \otimes N_x  \to N_x \otimes M$
does not vanish at any $x \in \ko^\times$. We call $c_{M,N}(z)$ the \emph{renormalizing coefficient} of $M$ and $N$.
We define $\tL(M,N)$ as
the order of zero of $c_{M,N}(z)$ at $z=1$.
We then define $\La(M,N)$, $\La^{\infty}(M,N)$ and $\de(M,N)$
similarly to $\tL(M,N)$ (see Definition~\ref{def: Lams} for new invariants).
Note that $\La^{\infty}(M,N) = 2\tLa(M,N)-\La(M,N)$ can be understood as a quantum affine analogue of $(\wt(V),\wt(W))$.

When $M$ and $N$ are simple modules in $\Ca_\g$, $c_{M,N}(z)$ is the ratio $d_{M,N}(z)$ to $a_{M,N}(z)$,
where $d_{M,N}(z)$ (resp.\ $a_{M,N}(z)$) denotes the \emph{denominator} (resp.\;\emph{universal coefficient}) of
the \emph{normalized $R$-matrix $\Rnorm_{M,N_z}(z)$} of $M$ and $N$, computed in~\cite{AK,DO94,KKK15B,Oh15,OT19} for fundamental representations.
Thus $\de(M,N)$ can be interpreted as the degree of zero of $d_{M,N}(z)d_{N,M}(z^{-1})$ at $z=1$ with the results in \cite{AK} (see Subsection~\ref{sec:Rmat}).

We next investigate several properties of the new invariants by using $R$-matrices and their coefficients, and prove that they
play the same role in the representation theory for quantum affine algebras as the ones for quiver Hecke algebras do.
Furthermore, new invariants provide more information arising from taking duals in $\Ca_\g$, which \emph{can not} be obtained in
the quiver Hecke algebra setting (see Remark~\ref{rem:dual}). For instances, we have
\begin{itemize}
\item $\Lambda(M,N)$ and $\Lambda^\infty(M,N)$ can be expressed in terms of $\de(M,\D^{k}N)$ for $k \in \Z$,
\item $ \Lambda^{\infty}(M,N) =\Li(N,M) = -\Lambda^{\infty}(M^*,N)= -\Lambda^{\infty}(\rd M,N)$,
\item $\Lambda^\infty(M,N) =  -\Lambda(M,D^{2n}N)=\Lambda(M,D^{-2n}N)$ for $n \gg 0$,
\item $\La(M,N)=\La(N^*,M)=\La(N,\rd M)$ and hence $\de(M,N)= \tfrac{1}{2}(\La(M,N)+\La(N,M)) = \tfrac{1}{2}(\La(M,N)+\La(M^*,N))$,
\end{itemize}
where $N^*$ (resp.\ $^*N$ and $\D^{k}N$) denotes the left (resp.\ right and $k$-th left) dual of $N$  (see Section~\ref{sec: New invs}).

With the new invariants at hand, we introduce the notions
(a) a \emph{$\Uplambda$-seed} $\Seed_\Uplambda$,
a triple $\Seed_{\Uplambda}=(\{ X_i \}_{i \in K}, L,\tB) $ consisting of a cluster $\{ X_i \}_{i \in K}$,
a skew-symmetric $K \times K$-matrix $L$ and  a $K \times \Kex$-matrix $\tB=(b_{jk})$ such that $(L\tB)_{ij}=2\delta_{ij}$, and (b)
a cluster algebra \emph{associated to} $\Seed_\Uplambda$. Here the mutation rule for a cluster algebra associated to $\Seed_\Uplambda$ is the same as the ones for quantum cluster algebras.

Finally, we introduce the  notion of a \emph{$\Uplambda$-admissible monoidal seed} in  a monoidal subcategory $\shc$ of $\Ca_\g$ by using the new invariants as follows.
 A monoidal seed $\seed=(\{M_i\}_{i\in \K},\widetilde B)$ is said to be $\Uplambda$-admissible if  it satisfies
\bna
\item
$(\Lambda^\seed\tB)_{jk}=-2\delta_{jk}$ where $\Lambda^\seed \seteq (\Lambda(M_i,M_j))_{i,j \in K}$,
\item for each $k\in\Kex$,  there exists a  real simple module $M_k'$ in $\shc$, corresponding to the mutated cluster variable $X_k'$, satisfying $\de(M_j,M_k')=\delta_{jk}$ and the short exact sequence
$$     0 \to  \sotimes_{b_{ik} >0} M_i^{\tens b_{ik}} \to M_k \tens M_k' \to  \sotimes_{b_{ik} <0} M_i^{\tens (-b_{ik})} \to 0 .$$
\ee

By employing the framework of  \cite[Section 7]{KKKO18} with new invariants and notions, we prove the main result of this paper:
\begin{maintheorem}[{Theorem~\ref{th:main}}]
For a monoidal seed $\seed=(\{M_i\}_{i \in \K},\tB)$ in a monoidal subcategory $\shc$ of $\Ca_\g$, assume the following condition{\rm:}
\begin{itemize}
\item The Grothendieck ring $\K(\shc)$ of $\shc$ is isomorphic to the cluster algebra $\mathscr{A}$ associated to
the initial seed $[\seed]\seteq \bl \{ [M_i]\}_{i \in \K}, \tB\br$,
\item $\seed$ is $\Uplambda$-admissible.
\end{itemize}
Then the category $\shc$ is a monoidal categorification of the cluster algebra $\mathscr{A}$.
\end{maintheorem}
As consequences, we can obtain the following applications (Corollary~\ref{cor:main}):
\bnum
\item For $k\in\Kex$ and the $k$-th cluster variable module $\widetilde{M}_k$ of a monoidal seed $\widetilde{\seed}$ obtained by successive mutations from the initial monoidal seed
$\seed$,  we have $\de(\widetilde{M}_k,\widetilde{M}_k')=1$.
\item  Any monoidal cluster $\{\widetilde{M}_i\}_{i\in\K}$  is a maximal real commuting family in $\shc$ (see Definition~\ref{def: max com}).
\end{enumerate}

In the forthcoming paper,
we will apply the main theorem to certain monoidal subcategories $\shc$ of $\Ca_\g$ for providing monoidal categorifications.

\smallskip

This paper is organized as follows. We give the necessary background on quantum affine algebras, their representations, and $R$-matrices, their related coefficients in Section~\ref{sec: preliminaries}.
In Section~\ref{sec: New invs} and Section~\ref{sec: Further properties}, we introduce new invariants for pairs of $\uqpg$-modules by using $R$-matrices and investigate their properties. Especially, we will show the
similarities of new invariants with the ones for quiver Hecke algebras in Section~\ref{sec: Further properties}.
In Section~\ref{sec: cluster algebra}, we briefly recall the definition of cluster algebras with the consideration on $\Uplambda$-seeds.
In Section~\ref{sec: monoidal categorification}, we prove our main result with newly introduced invariants and notions.

\section{Preliminaries} \label{sec: preliminaries}

\begin{convention}\label{convention}
\bnum
\item
For a statement $P$, $\delta(P)$ is $1$ or $0$ according that
$P$ is true or not.
\item for a field $\cor$, $a\in\cor$ and $f(z)\in\cor(z)$,
we denote
by $\zero_{z=a}f(z)$ the order of zero of $f(z)$ at $z=a$.\label{conv:zero}
\ee
\end{convention}

\subsection{Quantum affine algebras}\label{subsec:Quantum affine algebra}
Let ($\cmA$,$\wl$,$\Pi$,$\wl^\vee$,$\Pi^\vee$) be an \emph{affine Cartan datum}. It consists of   an affine Cartan matrix $\cmA=(a_{ij})_{i,j\in I}$
with a finite index set $I$, a free abelian group $\wl$ of rank $|I|+1$,
called the \emph{weight lattice}, a set $\Pi=\{\al_i \in \wl \ | \ i \in I\}$ of linearly independent elements called \emph{simple roots}, the group $\wl^\vee \seteq \Hom_\Z(\wl,\Z)$ called the \emph{coweight lattice}, and a set $\Pi^\vee=\{h_i \ | \ i \in I\} \subset \wl^\vee$ of \emph{simple coroots}.
Note that the pairing $\lan \ , \ \ran$ between $\cwl$ and $\wl$
satisfies $\langle h_i,\al_j \rangle=a_{ij}$ for all $i,j\in I$, and for each $i \in I$, there exists $\fw_i \in \wl$ such that $\lan h_j,\fw_i\ran=\delta_{ij}$ for all $j \in I$. We choose such
elements $\fw_i$ and call them the \emph{fundamental weights}.
The free abelian group $\rl \seteq \bigoplus_{i \in I}\Z\al_i\subset\wl$ is called the \emph{root lattice}. Set $\prl = \sum_{i \in I}\Z_{\ge 0}\al_i
\subset \rl$.
Similarly we set $\crl\seteq \bigoplus_{i \in I}\Z h_i\subset\wl^\vee$
and $\pcrl\seteq \sum_{i \in I}\Z_{\ge0} h_i$.

We choose \emph{the imaginary root} $\updelta= \sum_{i \in I} \mathsf{a}_i\al_i \in \prl$ and
\emph{the center} $c= \sum_{i \in I} \mathsf{c}_ih_i \in \pcrl$ such that
$\{ \la \in \wl \ | \  \lan h_i,\la \ran = 0 \text{ for every } i \in I \}=\Z\updelta$ and $\{ h \in \wl^\vee\ | \  \lan h,\al_i \ran = 0 \text{ for every } i \in I \}=\Z c$ (see \cite[Chapter 4]{Kac}).
We set $\wl_\cl  \seteq \wl / (\wl\cap\Q \updelta)\simeq\Hom(\crl,\Z)$ and call it \emph{the classical weight lattice}.
We choose $\rho \in \wl$ (resp.\ $\rho^\vee \in \wl^\vee$) such that $\lan h_i,\rho \ran=1$ (resp.\ $\lan \rho^\vee,\al_i\ran =1$) for all $i \in I$.

Set $\h \seteq \Q \tens_{\Z} \wl^\vee$. Then there exists a symmetric bilinear form $( \ , \ )$ on $\h^*$ satisfying $$\lan h_i,\la \ran= \dfrac{2(\al_i,\la)}{(\al_i,\al_i)} \qquad \text{ for any $i \in I$ and $\la \in \h^*$}.$$
We normalize the bilinear form $( \ , \ )$  by
\eq \lan c,\la \ran = (\updelta,\la)\qt{for any $\la \in \h^*$.}
\eneq

We denote by $\g$ the \emph{affine Kac-Moody algebra} associated with $(\cmA,\wl,\Pi,\wl^\vee,\Pi^\vee)$ and by $\W \seteq \lan r_i \ | \ i \in I \ran \subset GL(\h^*)$ the \emph{Weyl group} of $\g$, where
$r_i(\la) \seteq \la - \lan h_i,\la \ran \al_i$ for $\la \in \h^*$.
We will use the standard convention in~\cite{Kac}
to choose $0\in I$ except $A^{(2)}_{2n}$-case, in which case we take the longest simple root as $\al_0$.
In particular, we have always $\mathsf{a}_0=1$, while $\mathsf{c}_0=2$ or $1$
according that $\g=A^{(2)}_{2n}$ or not.

We define $\g_0$ to be the subalgebra of $\g$ generated by the Chevalley generators $e_i$, $f_i$  and $h_i$ for $i \in I_0 \seteq I \setminus \{ 0 \}$
and $\W_0$ to be the subgroup of $\W$ generated by $r_i$ for $i \in I_0$. Note that $\g_0$ is a finite-dimensional simple Lie algebra and $\W_0$ contains the longest element $w_0$.

\smallskip

Let $q$ be an indeterminate and $\ko$ be the algebraic closure of the subfield $\C(q)$
in the algebraically closed field $\corh\seteq\bigcup_{m >0}\C((q^{1/m}))$. When we deal with quantum affine algebras, we regard $\ko$ as the base field.

 For $m,n \in \Z_{\ge 0}$ and $i\in I$, we define
$q_i = q^{(\alpha_i,\alpha_i)/2}$ and
\begin{equation*}
 \begin{aligned}
 \ &[n]_i =\frac{ q^n_{i} - q^{-n}_{i} }{ q_{i} - q^{-1}_{i} },
 \ &[n]_i! = \prod^{n}_{k=1} [k]_i ,
 \ &\left[\begin{matrix}m \\ n\\ \end{matrix} \right]_i=  \frac{ [m]_i! }{[m-n]_i! [n]_i! }.
 \end{aligned}
\end{equation*}

\begin{definition} \label{Def: GKM}
The {\em quantum affine algebra} $U_q(\g)$ associated with an affine Cartan datum $(\cmA,\wl,\Pi,\wl^\vee,\Pi^\vee)$ is the associative algebra over $\ko$ with $1$ generated by $e_i,f_i$ $(i \in I)$ and
$q^{h}$ $(h \in \gamma \; \wl^{\vee})$ satisfying following relations:
\begin{enumerate}[{\rm (i)}]
\item  $q^0=1, q^{h} q^{h'}=q^{h+h'} $\quad for $ h,h' \in \gamma\; \wl^{\vee},$
\item  $q^{h}e_i q^{-h}= q^{\langle h, \alpha_i \rangle} e_i$,
$q^{h}f_i q^{-h} = q^{-\langle h, \alpha_i \rangle }f_i$\quad for $h \in \gamma^{-1}\wl^{\vee}, i \in I$,
\item  $e_if_j - f_je_i =  \delta_{ij} \dfrac{K_i -K^{-1}_i}{q_i- q^{-1}_i }, \ \ \text{ where } K_i=q_i^{ h_i},$
\item  $\displaystyle \sum^{1-a_{ij}}_{k=0}
(-1)^ke^{(1-a_{ij}-k)}_i e_j e^{(k)}_i =  \sum^{1-a_{ij}}_{k=0} (-1)^k
f^{(1-a_{ij}-k)}_i f_jf^{(k)}_i=0 \quad \text{ for }  i \ne j, $
\end{enumerate}
where $e_i^{(k)}=e_i^k/[k]_i!$ and $f_i^{(k)}=f_i^k/[k]_i!$.
\end{definition}

Let us denote by $U_q^+(\g)$ (resp.\ $U_q^-(\g)$) the subalgebra of $U_q(\g)$ generated by $e_i$ (resp.\ $f_i$) for $i \in I$.
We denote by $U_q'(\g)$ the subalgebra of $U_q(\g)$ generated by $e_i,f_i,K^{\pm 1}_i$ $(i \in I)$ and we call it \emph{also} the quantum
affine algebra. Throughout this paper, we mainly deal with $U_q'(\g)$.

We use the coproduct $\Delta$ of $\uqpg$ given by
\begin{align}\label{eq: comultiplication}
\Delta(q^h)=q^h \tens q^h, \ \ \Delta(e_i)=e_i \tens K_i^{-1}+1 \tens e_i, \ \Delta(f_i)=f_i \tens 1 +K_i \tens f_i.
\end{align}

Let us denote by $ \ \bar{ } \ $ the bar involution of $U_q'(\g)$ defined as follows:
\[ q^{1/m} \to q^{-1/m}, \qquad\qquad
e_i \mapsto e_i, \qquad\qquad f_i \mapsto f_i, \qquad\qquad K_i \mapsto K_i^{-1}.
\]

We denote by $\uqm$ the category of finite-dimensional integrable
$\uqpg$-modules;
i.e., finite-dimensional modules $M$ with a weight decomposition
$$M=\soplus_{\la\in\wl_\cl}M_\la  \qquad \text{ where } M_\la=\st{u\in M\mid K_iu=q_i^{\ang{h_i,\la}}}.$$
Note that $\uqm$ is a monoidal
category with the coproduct in~\eqref{eq: comultiplication}.
It is known that the Grothendieck ring $K(\uqm)$ is a commutative ring.
A simple module $M$ in $\uqm$ contains a non-zero vector $u$ of weight $\lambda\in \wl_\cl$ such that (i) $\langle h_i,\lambda \rangle \ge 0$ for all $i \in I_0$,
(ii) all the weight of $M$ are contained in $\lambda - \sum_{i \in I_0} \Z_{\ge 0} \cl(\alpha_i)$, where $\cl\colon \wl\to \wl_\cl$ denotes the canonical projection.
Such a $\la$ is unique and $u$ is unique up to a constant multiple. We call $\lambda$ the {\it dominant extremal weight} of $M$ and $u$ a {\it dominant extremal weight vector} of $M$.

For an integrable $U_q'(\g)$-module $M$, the {\it affinization} $M_z$ of $M$ is the $U_q(\g)$-module
$$M_z = \soplus_{\lambda \in \wl} (M_z)_\lambda \quad \text{ with } \quad  (M_z)_\lambda=M_{\cl(\lambda)}.$$
Here the actions $e_i$  and $f_i$ are defined in a way that they commute with the canonical projection $\cl: M_z \to M$.

We denote by $z_M \colon M_z \to M_z$ the $U_q'(\g)$-module automorphism of weight $\updelta$ defined by
$(M_z)_{\lambda} \overset{\sim}{\longrightarrow} (M_z)_{\lambda+\updelta}$. For $x \in \ko^\times$, we define
$$M_x \seteq M_z / (z_M -x)M_z.$$
We call $x$ a {\it spectral parameter}. Note that, for a module $M$ in $\uqm$ and $x \in \ko^\times$, $M_x$ is also contained in $\uqm$.
The functor $T_x$ defined by $T_x(M)=M_x$ is an endofunctor of $\uqm$
which commutes with tensor products.

Let us take a  section  $\iota\col \wl_\cl\monoto \wl$ of  $\cl\col\wl\to\wl_\cl$
such that $\iota\cl (\al_i)=\al_i$ for all $i\in I_0$.
 For $u\in M_\lambda$ ($\lambda \in P_\cl$) and an indeterminate $z$, let
us denote by $u_z\in (M_z)_{\iota(\lambda)}$ the element such that
$\cl(u_z)=u$. With this notation,  we have
$$e_i(u_z)=z^{\delta_{i,0}}(e_iu)_z, \quad
f_i(u_z)=z^{-\delta_{i,0}}(f_iu)_z, \quad K_i(u_z)=(K_iu)_z.
$$
Then we have $M_z \simeq \cz\tens M$, and
the automorphism $z_M$ on $M_z$ corresponds to
the multiplication of $z$ on $\cz\tens M$.
Thus $u_z$ is the element $1 \tens u \in \cz \tens M$ for $u \in M$. We also use $M_{z_{M}}$ instead of $M_z$ to emphasize $z$
as the automorphism on $M_z$ of weight $\updelta$.

For each $i \in I_0$, we set $$\varpi_i \seteq {\rm
gcd}(\mathsf{c}_0,\mathsf{c}_i)^{-1}\cl(\mathsf{c}_0\Lambda_i-\mathsf{c}_i \Lambda_0) \in \wl_\cl.$$
Then $\wl_\cl^0\seteq\{\la\in\wl_\cl\mid \ang{c,\la}=0\}$ is equal to
$\soplus_{i\in I_0}\Z\varpi_i$. Moreover, for any $i\in I_0$, there
exists a unique simple module $V(\varpi_i)$ in $\uqm$ satisfying certain conditions (see \cite[\S 5.2]{Kas02}), which is called the \emph{fundamental module of weight $\varpi_i$}.
The dominant extremal weight of $V(\varpi_i)$ is $\varpi_i$.

For a $U_q'(\g)$-module $M$, we denote by $\overline{M}=\{ \bar{u} \mid u \in M \}$
the $U_q'(\g)$-module defined by $x \bar{u} \seteq \overline{\ms{2mu}\overline{x} u\ms{2mu}}$ for $x \in U_q'(\g)$.
Then we have
\begin{align} \label{eq: bar structure}
\overline{M_a} \simeq (\overline{M})_{\,\overline{a}}, \qquad\qquad \overline{M \otimes N} \simeq \overline{N} \otimes \overline{M}.
\end{align}
Note that $V(\varpi_i)$ is \emph{bar-invariant}; i.e., $\overline{V(\varpi_i)}\simeq V(\varpi_i)$ (see \cite[Appendix A]{AK}).

\begin{remark}[{\cite[\S 1.3]{AK}}] \label{rem:m_i}
Let $m_i$ be a positive integer such that
$$\W\pi_i\cap\bigl(\pi_i+\Z\updelta\bigr)=\pi_i+\Z m_i\updelta,$$
where $\pi_i$ is an element of $\wl$ such that $\cl(\pi_i)=\varpi_i$.
We have $m_i=(\al_i,\al_i)/2$ in the case when $\g$ is the dual of an untwisted affine algebra, and $m_i=1$ otherwise.
Then, for $x,y\in \ko^\times$, we have
$$V(\varpi_i)_x \simeq V(\varpi_i)_y  \quad \text{ if and only if }   \quad   x^{m_i}=y^{m_i}.$$
\end{remark}

For a module $M$ in $\uqm$, let us denote the right and the left dual of $M$ by $\rd M$ and $M^*$, respectively.
That is, we have isomorphisms
\begin{align*}
&\Hom_{U_q'(\g)}(M\hspace{-.4ex} \tens \hspace{-.4ex} X,Y) \hspace{-.2ex} \simeq \hspace{-.2ex} \Hom_{U_q'(\g)}(X, \hspace{-.4ex} \rd M \hspace{-.4ex} \tens \hspace{-.4ex} Y),\
\Hom_{U_q'(\g)}(X \hspace{-.4ex} \tens \hspace{-.4ex} \rd M,Y)\hspace{-.2ex} \simeq \hspace{-.2ex} \Hom_{U_q'(\g)}(X, Y \hspace{-.4ex} \tens \hspace{-.4ex} M),\\
&\Hom_{U_q'(\g)}(M^* \hspace{-.4ex} \tens \hspace{-.4ex}  X,Y)\hspace{-.2ex} \simeq \hspace{-.2ex} \Hom_{U_q'(\g)}(X, M \hspace{-.4ex}  \tens \hspace{-.4ex}  Y),\
\Hom_{U_q'(\g)}(X \hspace{-.4ex} \tens \hspace{-.4ex}  M,Y)\hspace{-.2ex} \simeq \hspace{-.2ex} \Hom_{U_q'(\g)}(X, Y \hspace{-.4ex}  \tens \hspace{-.4ex} M^*),
\end{align*}
which are functorial in $U_q'(\g)$-modules $X$ and $Y$.

Hence, we have the evaluation morphisms
$$M\tens \rd M\to\one,\quad M^*\tens M\to\one$$
and the co-evaluation morphisms
$$\one\to \rd M\tens M,\quad \one\to M\tens M^*.$$

Note the followings (see \cite[Appendix A]{AK}):
\begin{enumerate}[{\rm (i)}]
\item For any module $M$ in $\uqm$, we have
$$  M^{**} \simeq M_{q^{-2(\updelta,\rho)}} \quad \text{ and }  {}^{**}\ms{-1mu}M \simeq M_{q^{2(\updelta,\rho)}}.$$
\item The duals of $V(\varpi_i)_x$ $(x\in \cor^\times)$ satisfy:
\begin{equation}\label{eq: dual}
\bigl( V(\varpi_i)_x\bigr)^*  \simeq   V(\varpi_{i^*})_{(p^*)^{-1}x},
\qquad {}^*\bigl(V(\varpi_i)_x \bigr) \simeq   V(\varpi_{i^*})_{p^*x}
\end{equation}
where $p^* \seteq (-1)^{\langle \rho^\vee ,\updelta \rangle}q^{\ang{c,\rho}}$ and $i^*\in I_0$
is defined by  $\al_{i^*}=-w_0\,\al_i$.
\end{enumerate}

We say that a $U_q'(\g)$-module $M$ is {\em good}
if it has a {\em bar involution}, a crystal basis with {\em simple
crystal graph}, and a {\em global basis} (see~\cite{Kas02} for
the precise definition). It is known that the fundamental representations are good modules.

\begin{definition}
We say that  a $U_q'(\g)$ module $M$ is {\em quasi-good} if
$$ M \simeq V_c $$
for some good module $V$ and $c \in \cor^\times$.
\end{definition}

Note that every quasi-good module is a simple $U_q'(\g)$-module. Moreover the tensor product $M^{\otimes k} \seteq \underbrace{M \tens \cdots \tens M}_{k\text{-times}}$ for a quasi-good module $M$ and $k \in \Z_{\ge 1}$
is again quasi-good.

For simple modules $M$ and $N$ in $\uqm$,
we say that $M$ and $N$ {\em commute}
or $M$ commutes with $N$ if $M\tens N\simeq N\tens M$.
We say that $M$ and $N$ \emph{strongly commute} or $M$ \emph{strongly commutes with} $N$ if $M\tens N$ is simple.
When simple modules $M$ and $N$ strongly commute, they commute.
Note that $M\tens N$ is simple if and only if $N\tens M$ is simple, since $K(\uqm)$ is a commutative ring.

Also, when the simple modules $M_t$ $(1 \le t \le m)$ strongly commute with
 each other, it is proved in \cite{Her10S} that
$$  M_{1} \otimes \cdots \otimes M_m \simeq M_{\sigma(1)} \otimes  \cdots \otimes M_{\sigma(m)} \text{ is simple}     $$
for every element $\sigma$ in the symmetric group $\sym_m$ on $m$-letters.
We say that a simple module $L$ in $\Ca_\g$  is \emph{real} if $L$ strongly commutes with itself, i.e., if $L\tens L$ is simple.
Note that quasi-good modules are real.

\subsection{R-matrices, universal and renormalizing coefficients} \label{sec:Rmat}

In this subsection, we review the notion of $R$-matrices on $\uqpg$-modules and their coefficients
by following mainly \cite[\S 8]{Kas02} and \cite[Appendices A and B]{AK}. Let us choose the \emph{universal $R$-matrix}
in the following way: take a basis $\{P_\nu\}_\nu$ of $U_q^+(\g)$
and a basis $\{Q_\nu\}_\nu$ of $U_q^-(\g)$ dual to each other with respect to a suitable coupling between
$U_q^+(\g)$ and $U_q^-(\g)$. Then for $\uqpg$-modules $M$ and $N$ define
\begin{align}
\Runiv_{M,N}(u\otimes v)=q^{(\wt(u),\wt(v))} \sum_\nu P_\nu v\otimes Q_\nu u\,,
\end{align}
so that
$\Runiv_{M,N}$ gives a $\uqpg$-linear homomorphism from $M\otimes N$ to $N\otimes M$ provided that the infinite sum has a meaning.

For modules $M$ and $N$ in $\uqm$, it is known that $\Runiv_{M,N_z}$ converges in the $z$-adic topology. Hence, it induces a morphism of
$\ko((z))\tens\uqpg$-modules
\begin{align}\label{eq: renom1}
\Runiv_{M,N_z} \colon \ko((z))\tens_{\ko[z^{\pm1}]} (M \tens N_z) \To \ko((z))\tens_{\ko[z^{\pm1}]} (N_z\tens M).
\end{align}
Moreover,  $\Runiv_{M,N_z}$ is an isomorphism.

It is known that $\Runiv$ satisfies the following properties:
the following diagram commutes
\eq\label{eq: commutativity}
&&
\scalebox{.89}{\xymatrix@C=1ex{
\ko((z))\tens_{\ko[z^{\pm1}]} (M\tens N\tens L_z)
 \ar[rd]_(.4){M\tens \Runiv_{N,L_z}}\ar[rr]^
{\Runiv_{M\tens N,L_z}}
&&
\ko((z))\tens_{\ko[z^{\pm1}]} (L_z\tens M\tens N)\\
&\ko((z))\tens_{\ko[z^{\pm1}]} (M\tens L_z\tens N)
\ar[ru]_(.6){\Runiv_{M,L_z}\tens N}
}}
\eneq
for $L$, $M$, $N$ in $\uqm$.

\medskip
Let $M$ and $N$ be non-zero modules in $\uqm$.
If there exists $a(z) \in \ko((z))$
such that
$$a(z) \Runiv_{M,N_z}\bl M\tens N_z\br\subset N_z\tens M, $$
then we say that $\Runiv_{M,N_z}$ is \emph{\rr}.
In this case, we can choose
$c_{M,N}(z) \in \ko((z))^\times$ as $a(z)$ such that,
 for any $x \in \ko^\times$, the specialization
of $\Rren_{M,N_z} \seteq c_{M,N}(z)\Runiv_{M,N_z}
\col M \otimes N_z \to N_z \otimes M$ at $z=x$
$$  \Rren_{M,N_z}\big\vert_{z=x} \colon M \otimes N_x  \to N_x \otimes M$$
does not vanish.  Such $\Rren_{M,N_z}$ and $c_{M,N}(z)$ are unique up to a multiple of $\cz^\times = \bigsqcup_{\,n \in \Z}\ko^\times z^n$. We call $c_{M,N}(z)$  the
\emph{\rc}.

We write
$$ \rmat{M,N} \seteq \Rren_{M,N_z}\vert_{z=1} \colon M \otimes N \to N \otimes M,$$
and call it \emph{$R$-matrix}. The $R$-matrix $\rmat{M,N}$ is well-defined up to a constant multiple when $\Runiv_{M,N_z}$ is \rr. By the definition, $\rmat{M,N}$ never vanishes.

\medskip
Now assume that $M$ and $N$ are
simple $\uqpg$-modules in $\uqm$.
Then $\ko(z)\otimes_{\ko[z^{\pm1}]}  \big(M \otimes N_z \big)$ is a simple $\ko(z)\tens\uqpg$-module (\cite[Proposition 9.5]{Kas02}).

Furthermore, we have the following.
Let $u$ and $v$
be dominant extremal weight vectors of $M$ and $N$,
respectively.
Then there exists $a_{M,N}(z)\in\ko[[z]]^\times$
such that
$$\Runiv_{M,N_z}\big( u \tens v_z\big)=
a_{M,N}(z)\big( v_z\tens u \big).$$
Then $\Rnorm_{M,N_z}\seteq a_{M,N}(z)^{-1}\Runiv_{M,N_z}\vert_{\;\ko(z)\otimes_{\ko[z^{\pm1}]} ( M \otimes N_z) }$
induces a unique $\ko(z)\tens\uqpg$-module isomorphism
\begin{equation*}
\Rnorm_{M, N_z} \colon
\ko(z)\otimes_{\ko[z^{\pm1}]} \big( M \otimes N_z\big) \longisoto\ko(z)\otimes_{\ko[z^{\pm1}]}  \big( N_z \otimes M \big)
\end{equation*}
satisfying
\begin{equation*}
\Rnorm_{M, N_z}\big( u  \otimes v_z\big) = v_z\otimes u .
\end{equation*}
Hence, the universal $R$-matrix
$\Runiv_{M,N_z}$ is \rr.
We call $a_{M,N}(z)$ the {\it universal coefficient} of $M$ and $N$, and $\Rnorm_{M,N_z}$ the {\em normalized $R$-matrix}.

Similarly there exists a unique $\ko(z)\tens\uqpg$-module isomorphism
\eqn
&&
\Rnorm_{M_z, N} \colon
\ko(z)\otimes_{\ko[z^{\pm1}]} \big( M_z \otimes N\big) \longisoto \ko(z)\otimes_{\ko[z^{\pm1}]}  \big( N\otimes M_z \big)
\eneqn
satisfying
\begin{equation*}
\Rnorm_{M, N}\big( u_z  \otimes v\big) = v\otimes u_z .
\end{equation*}
Note that
$\Rnorm_{M_z,N}=T_z\circ\Rnorm_{M,N_w}$
with $w=1/z$.
Here, for $x\in\cor(z)$, the functor $T_x$ is the endofunctor of the category of
$\cor(z)\tens\uqpg$-modules $L$ given by $T_x(L)=L_x$

Let $d_{M,N}(z) \in \ko[z]$ be a monic polynomial of the smallest degree such that the image of $d_{M,N}(z)
\Rnorm_{M,N_z}(M\tens N_z)$ is contained in $N_z \otimes M$. We call $d_{M,N}(z)$ the {\em denominator of $\Rnorm_{M,N_z}$}. Then we have
\begin{equation}
\Rren_{M,N_z}= d_{M,N}(z)\Rnorm_{M,N_z}
\col M \otimes N_z \To N_z \otimes M
\qt{up to a multiple of $\cz^\times$.}
\end{equation}
Hence, we have
\begin{align} \Rren_{M,N_z} =a_{M,N}(z)^{-1}d_{M,N}(z)\Runiv_{M,N_z}
\quad \text{and} \quad  c_{M,N}(z)= \dfrac{d_{M,N}(z)}{a_{M,N}(z)}
\end{align}
up to a multiple of $\ko[z^{\pm1}]^\times$.

Since $\ko(z)\otimes_{\ko[z^{\pm1}]}  \big(M \otimes N_z \big)$ is a simple $\ko(z)\tens\uqpg$-module (\cite[Proposition 9.5]{Kas02}),
we have
\eq
&&\Hom_{\cz\tens\uqpg}(M\tens N_z,N_z\tens M)=\cz\Rren_{M,N_z}.
\eneq
Similarly there exists a $\cz\tens\uqpg$-linear homomorphism
$\Rren_{M_z,N}\col M_z\tens N\to N\tens M_z$ such that
\eq
&&\Hom_{\cz\tens\uqpg}(M_z\tens N,N\tens M_z)=\cz\Rren_{M_z,N}.
\eneq
The homomorphism
$\Rren_{M_z,N}$ is unique up to a multiple of $\czt$.
We have
\eq
\Rren_{M_z,N}=d_{M,N}(z^{-1})\;\Rnorm_{M_z,N}\quad\mod\czt.
\eneq
In particular, we have
\eq
\Rren_{N_z,M}\circ\Rren_{M,N_z}=d_{M,N}(z)d_{N,M}(z^{-1})\id_{M\tens N_z}
\quad\mod \ \czt.\label{eq:RR}
\eneq

\begin{theorem}[{ \cite{AK,Chari,Kas02,KKKO15}; see also \cite[Theorem 2.2]{KKK18A}}]  \label{Thm: basic properties}\hfill
\bnum
\item For good modules $M$ and $N$, the zeroes of $d_{M,N}(z)$ belong to
$\C[[q^{1/m}]]q^{1/m}$ for some $m\in\Z_{>0}$.
\item \label{it:comm} For simple modules $M$ and $N$ such that one of them is real, $M_x$ and $N_y$ strongly commute to each other if and only if $d_{M,N}(z)d_{N,M}(1/z)$ does not vanish at $z=y/x$.
\item  Let $M_k$ be a good module
with a dominant extremal vector $u_k$ of weight $\lambda_k$, and
$a_k\in\ko^\times$ for $k=1,\ldots, t$.
Assume that $a_j/a_i$ is not a zero of $d_{M_i, M_j}(z) $ for any
$1\le i<j\le t$. Then the following statements hold.
\bna
\item  $(M_1)_{a_1}\otimes\cdots\otimes (M_t)_{a_t}$ is generated by $u_1\otimes\cdots \otimes u_t$.
\item The head of $(M_1)_{a_1}\otimes\cdots\otimes (M_t)_{a_t}$ is simple.
\item Any non-zero submodule of $(M_t)_{a_t}\otimes\cdots\otimes (M_1)_{a_1}$ contains the vector $u_t\otimes\cdots\otimes u_1$.
\item The socle of $(M_t)_{a_t}\otimes\cdots\otimes (M_1)_{a_1}$ is simple.
\item  Let $\rmat{}: (M_1)_{a_1}\otimes\cdots\otimes (M_t)_{a_t} \to (M_t)_{a_t}\otimes\cdots\otimes (M_1)_{a_1}$  be the specialization of $R^{{\rm norm}}_{M_1,\ldots, M_t}
\seteq\prod\limits_{1\le j<k\le t}R^{{\rm norm}}_{M_j,\,M_k}$ at $z_k=a_k$.
Then the image of $\rmat{}$ is simple and it coincides with the head of $(M_1)_{a_1}\otimes\cdots\otimes (M_t)_{a_t}$
and also with the socle of $(M_t)_{a_t}\otimes\cdots\otimes (M_1)_{a_1}$.
\end{enumerate}
\item\label{item5}
For a simple integrable $U_q'(\g)$-module $M$, there exists
a finite sequence
\begin{align} \label{eq: ordered pair}
\big((i_1,a_1),\ldots, (i_t,a_t)\big) \in  I_0\times \mathbf{k}^\times
\end{align}
which satisfies the following condition$:$
for any $\sigma\in\sym_t$ such that
$$d_{V(\varpi_{i_{\sigma(k)}}),V(\varpi_{i_{\sigma(k')}})}(a_{\sigma(k')}/a_{\sigma(k)}) \not=0\quad\text{for $1\le k<k'\le t$,}$$
$M$ is isomorphic to the head of $V(\varpi_{i_{\sigma(1)}})_{a_{\sigma(1)}}\otimes\cdots\otimes V(\varpi_{i_{\sigma(t)}})_{a_{\sigma(t)}}$.

Moreover, such a sequence $\big((i_1,a_1),\ldots, (i_t,a_t)\big)$
is unique up to permutation. In particular, $M$ has the dominant extremal weight $\sum_{k=1}^t \varpi_{i_k}$.
\end{enumerate}
\end{theorem}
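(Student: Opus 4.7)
The plan is to establish each of the four parts in turn, drawing heavily on the explicit construction of the universal $R$-matrix via the quasi-$R$-matrix pairing of $U_q^+(\g)$ and $U_q^-(\g)$, and on the crystal/global basis structure of good modules.

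For part (i), I would examine the action of $\Runiv_{M,N_z}$ on the dominant extremal weight vector $u\tens v_z$ and more general weight vectors. Because good modules admit a bar-invariant global basis over a DVR of the form $\mathbb{C}[[q^{1/m}]]$, the entries of $\Runiv_{M,N_z}$ in this basis are power series in $q^{1/m}$ and $z$ with integral coefficients. The denominator $d_{M,N}(z)$ is the smallest polynomial clearing these coefficients after dividing by $a_{M,N}(z)\in\ko[[z]]^\times$; a valuation-theoretic argument then forces each root of $d_{M,N}(z)$ to lie in $\mathbb{C}[[q^{1/m}]]\cdot q^{1/m}$, since a root outside this set would have to specialize at $q=0$ to a nonzero classical number, contradicting the vanishing pattern seen at the crystal level.

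For part (ii), I would use the key identity $\Rren_{N_z,M}\circ\Rren_{M,N_z}=d_{M,N}(z)d_{N,M}(1/z)\cdot\id$ from \eqref{eq:RR}. Specializing at $z=y/x$, if $d_{M,N}(y/x)d_{N,M}(x/y)\neq 0$ then $\rmat{M_x,N_y}$ is invertible (in particular nonzero), and since its composition with $\rmat{N_y,M_x}$ is a nonzero scalar, both maps are isomorphisms; combined with the reality of one factor one deduces that $M_x\tens N_y$ is simple, hence strongly commuting. Conversely, if the scalar vanishes, the image of $\rmat{M_x,N_y}$ is a proper submodule, blocking simplicity. For part (iii), I would argue by induction on $t$, reducing it to the two-module case handled by (ii). The non-vanishing hypothesis guarantees that each $\Rnorm$ in the product $\prod_{j<k}\Rnorm_{M_j,M_k}$ specializes without pole, so $\rmat{}$ is well-defined. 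A dominant extremal weight vector $u_1\tens\cdots\tens u_t$ generates the tensor product from the left by a standard highest-weight-type argument (using that $\Runiv$ is triangular with respect to weights), which gives (iii)(a); dualizing via the anti-involution gives (iii)(c). The head (resp.\ socle) statements and (iii)(e) then follow because the image of $\rmat{}$ contains $u_t\tens\cdots\tens u_1$ and is generated by the image of $u_1\tens\cdots\tens u_t$, forcing it to coincide simultaneously with the head of the source and socle of the target.

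The main obstacle is part (iv), which is essentially an affine analogue of the highest-weight classification. I would prove \emph{existence} by induction on dimension: choose any fundamental $V(\varpi_i)_a$ admitting a nonzero morphism into $M$ (equivalently, whose image under some embedding of $M$ into a tensor product of fundamentals appears as a "left-most" factor), peel it off, and iterate. For \emph{uniqueness up to permutation}, given two presentations of $M$ as heads of tensor products of fundamentals, I would use part (iii)(e) to realize reorderings of the factors as isomorphic heads whenever the reordering is "generic" with respect to the denominators; combined with the simplicity of the head, this identifies the multiset $\{(i_k,a_k)\}$ as an invariant of $M$. The delicate point is to ensure that enough orderings satisfy the non-vanishing conditions of (iii); this is precisely where the explicit knowledge of zeros of $d_{V(\varpi_i),V(\varpi_j)}(z)$ for fundamental modules is indispensable, and ultimately why the theorem cannot be decoupled from the detailed analysis of good modules in \cite{AK,Kas02,KKKO15}.
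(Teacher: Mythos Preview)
The paper does not supply its own proof of this theorem; it is stated as a compilation of known results from \cite{AK,Chari,Kas02,KKKO15} and no argument is given in the text. Consequently there is no in-paper proof to compare your sketch against. Your outline is broadly consistent with the approaches in those references, but two points should be sharpened.

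In part~(ii), your converse reasoning is not quite right: from the vanishing of $d_{M,N}(y/x)d_{N,M}(x/y)$ you cannot directly conclude that the image of $\rmat{M_x,N_y}$ is a \emph{proper} submodule. The clean argument is that if $M_x\tens N_y$ were simple, then $\rmat{M_x,N_y}$, being a nonzero map out of a simple module, would be injective and hence an isomorphism by dimension; the same would hold for $\rmat{N_y,M_x}$, so their composite would be a nonzero multiple of the identity, contradicting \eqref{eq:RR}. Note also that the reality hypothesis is actually used in the \emph{forward} direction (nonvanishing $\Rightarrow$ simplicity), via the simple-head/simple-socle theorem of \cite{KKKO15}; your sketch attaches it to the wrong implication.

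For part~(iii), the claim that $u_1\tens\cdots\tens u_t$ generates the tensor product is the genuine content of \cite[\S9]{Kas02} and is not a routine highest-weight argument; it relies essentially on the global basis of good modules and on the denominator hypothesis, so describing it as ``standard'' undersells what has to be done. For part~(iv), your peel-off-a-fundamental induction is a plausible heuristic, but the proofs in \cite{AK,Kas02} instead pass through the $\ell$-highest-weight (Drinfeld polynomial) classification of simple modules in $\uqm$: the multiset $\{(i_k,a_k)\}$ is read off from that data, which gives uniqueness immediately and reduces existence to the realization of each $\ell$-highest weight as a head of an ordered tensor product of fundamentals.
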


{}From the above theorem, for each simple module $M$ in $\uqm$, we can associate a  multiset of pairs  $\{ (i_k,a_k) \in I_0 \times \ko^\times \}_{1 \le  k \le t} $ satisfying
the conditions in Theorem~\ref{Thm: basic properties}~\eqref{item5}.
We call $\{ (i_k,a_k) \in I_0 \times \ko^\times \}_{1 \le k \le t} $ of $M$ \emph{the multipair associated to $M$}, and
write $$M=S((i_1,a_1),\ldots,(i_t,a_t)).$$

\Prop\label{prop:cstar}
Let $M$ and $N$ be non-zero modules in $\uqm$, and $a\in\cor^\times$ such that $\Runiv_{M,N_z}$ is rationally renormalizable. 
Then we have
\eqn
&& c_{M,N}(z)=c_{M^*,N^*}(z)=c_{\rd M,\rd N}(z),\\
&& c_{M_a,N}(z)= c_{M,N}(a^{-1}z),\quad c_{M,N_a}(z)= c_{M,N}(az).
\eneqn
\enprop
\Proof
The first assertion follows from
$\bl\Runiv_{M,N_z}\br^*=\Runiv_{M^*,N^*_z}$: that is,
$$\xymatrix@C=13ex{
(N_z\tens M)^*\ar@{-}[d]^\bwr\ar[r]_{\bl\Runiv_{M,N_z}\br^*}&(M\tens N_z)^*
\ar@{-}[d]^\bwr
\\
M^*\tens N_z^*\ar[r]_{\Runiv_{M^*,N_z^*}}
&(N_z)^*\tens M^*
}$$
commutes (\cite{FR92}). The second follows from the first and
the others are  trivial.
\QED

\begin{proposition}[{\cite[(A14), (A15), Proposition A.1, Lemma C.15]{AK}}]  \label{prop: aMN}
Let $M$ and $N$ be simple modules in $\uqm$.
\bnum
\item We have
\eq&&\hs{7ex}\ba{lll}
&&a_{M,N}(z)=a_{M^*,\,N^*}(z)=a_{\rd M,\, \rd N}(z),\\
&&d_{M,N}(z)=d_{M^*,\,N^*}(z)=d_{\rd M,\, \rd N}(z),\\[.5ex]
&&a_{M,N}(z)=a_{M_x,\,N_x}(z),\;
d_{M,N}(z)=d_{M_x,\,N_x}(z)\ \text{for any $x\in\cor^\times$.}
\ea
\label{eq:ainv}\eneq
\item
$
a_{M,N}(z)a_{\rd M,N}(z) \equiv \dfrac{d_{M,N}(z)}{d_{N,\rd M}(z^{-1})}
\quad \mod \ \ko[z^{\pm1}]^\times$.
\label{MN2}
\end{enumerate}
\end{proposition}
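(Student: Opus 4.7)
The plan is to handle part (i) quickly and concentrate the effort on part (ii), which is the substantive point.

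For part (i), the duality identities will follow from the fact that the universal $R$-matrix intertwines duality in a functorial way: under the canonical identification $(M\otimes N)^*\simeq N^*\otimes M^*$ induced by the antipode, the transpose of $\Runiv_{M,N_z}$ is (up to normalization) $\Runiv_{M^*,N^*_z}$. Testing this identity on the dominant extremal weight vectors — which are exchanged with dominant extremal vectors of the duals under $*$-duality — gives the equality $a_{M,N}(z)=a_{M^*,N^*}(z)$, and analogously $a_{\rd M,\rd N}(z)=a_{M,N}(z)$. Combining these with the relation $c_{M,N}(z)=c_{M^*,N^*}(z)=c_{\rd M,\rd N}(z)$ of Proposition~\ref{prop:cstar} and the defining equation $c_{M,N}(z)=d_{M,N}(z)/a_{M,N}(z)$ immediately produces the denominator equalities. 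The shift invariance $a_{M,N}(z)=a_{M_x,N_x}(z)$, $d_{M,N}(z)=d_{M_x,N_x}(z)$ follows because a common shift by $x$ on both factors only relabels spectral parameters, so the action on dominant extremal vectors, hence the universal coefficient and the denominator, depend only on the relative parameter $z$.

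For part (ii), the plan is to exploit the rigidity of $\uqm$ together with naturality of the universal $R$-matrix. Applying naturality to the evaluation $\ev\col M\tens\rd M\to\one$, using that $\Runiv_{\one,N_z}=\id$, yields
\[
(N_z\tens\ev)\circ\Runiv_{M\tens\rd M,\,N_z}=\ev\tens N_z .
\]
Expanding the left-hand side by the hexagon identity~\eqref{eq: commutativity} gives
\[
(N_z\tens\ev)\circ(\Runiv_{M,N_z}\tens\rd M)\circ(M\tens\Runiv_{\rd M,N_z})=\ev\tens N_z.
\]
Substituting $\Runiv_{M,N_z}=a_{M,N}(z)\,\Rnorm_{M,N_z}$, $\Runiv_{\rd M,N_z}=a_{\rd M,N}(z)\,\Rnorm_{\rd M,N_z}$ and clearing denominators by the factors $d_{M,N}(z)$, $d_{\rd M,N}(z)$ promotes the left-hand side to a morphism of $\ko[z^{\pm1}]\tens\uqpg$-modules. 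Since $M$, $\rd M$, $N$ are simple, the space
\(\Hom_{\ko[z^{\pm1}]\tens\uqpg}(M\tens\rd M\tens N_z,\,N_z)\)
is a free $\ko[z^{\pm1}]$-module of rank one generated by $\ev\tens N_z$, so the identity forces a scalar equation $a_{M,N}(z)a_{\rd M,N}(z)\cdot p(z)\equiv d_{M,N}(z)\,d_{\rd M,N}(z)\mod\ko[z^{\pm1}]^\times$, where $p(z)$ is the polynomial scalar by which the composition of renormalized $R$-matrices reduces onto $\ev\tens N_z$.

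The main obstacle is the identification of $p(z)$. The plan is to relate this scalar to $d_{\rd M,N}(z)\,d_{N,\rd M}(z^{-1})$ by composing the above diagram with $\Rren_{N_z,\rd M}$ on the $\rd M$-slot and invoking the key relation \eqref{eq:RR}, namely
\(\Rren_{N_z,\rd M}\circ\Rren_{\rd M,N_z}=d_{\rd M,N}(z)\,d_{N,\rd M}(z^{-1})\,\id\); this identifies one $d_{\rd M,N}(z)$-factor and produces an extra $d_{N,\rd M}(z^{-1})$-factor. Using $d_{\rd M,N}(z)=d_{M,N}(z)$ from part (i) to cancel, the resulting equality rearranges to the claimed
\(a_{M,N}(z)a_{\rd M,N}(z)\equiv d_{M,N}(z)/d_{N,\rd M}(z^{-1})\).
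The delicate book-keeping here is the spectral-parameter convention: one must distinguish $\Runiv_{M,N_z}$ (affinization on the right) from $\Runiv_{N_z,M}$ (affinization on the left), carefully tracking the substitution $z\leftrightarrow z^{-1}$ that produces the $z^{-1}$ in $d_{N,\rd M}(z^{-1})$, and verifying that all spurious $\cz^\times$-ambiguities are absorbed into the stated $\mod\ko[z^{\pm1}]^\times$ congruence.
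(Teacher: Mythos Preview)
The paper does not prove this proposition at all; it simply cites \cite{AK}, so there is no ``paper's own proof'' to compare against. I will therefore assess your sketch on its own merits.

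Part (i) is fine as sketched.

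Part (ii) has the right architecture --- rigidity plus naturality of the universal $R$-matrix with respect to the evaluation $M\tens\rd M\to\one$ is indeed how one proves such an identity --- but your execution contains a concrete error and an unfinished step.

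The concrete error: you write ``Using $d_{\rd M,N}(z)=d_{M,N}(z)$ from part (i) to cancel''. Part (i) gives only $d_{M,N}(z)=d_{\rd M,\rd N}(z)$, i.e.\ invariance under \emph{simultaneous} duality of both tensorands. It does \emph{not} give $d_{\rd M,N}(z)=d_{M,N}(z)$, and this equality is false in general (e.g.\ for fundamental modules, $d_{i^*,j}(z)$ and $d_{i,j}(z)$ differ). Your cancellation step therefore fails, and the bookkeeping that is supposed to produce the factor $d_{N,\rd M}(z^{-1})$ in the denominator collapses.

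The unfinished step: the identification of $p(z)$ is not actually carried out. You propose to ``compose with $\Rren_{N_z,\rd M}$ on the $\rd M$-slot and invoke \eqref{eq:RR}'', but after applying $N_z\tens\ev$ the target is $N_z$ and there is no $\rd M$-slot left to compose on; precomposing on the source side does not obviously reduce to an identity map either. What is really needed is the crossing-symmetry relation: the partial transpose of $\Runiv_{M,N_z}$ on the $M$-factor, computed via the adjunction $\Hom(M\tens N_z,N_z\tens M)\simeq\Hom(N_z\tens\rd M,\rd M\tens N_z)$, equals $(\Runiv_{\rd M,N_z})^{-1}$ (equivalently $\Runiv_{N_z,\rd M}$). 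Translating this to normalized $R$-matrices is exactly what produces the ratio $d_{M,N}(z)/d_{N,\rd M}(z^{-1})$, with no spurious $d_{\rd M,N}(z)$ factor to cancel. Your sketch gestures at \eqref{eq:RR} but applies it on the wrong pair; the correct route uses the partial transpose directly rather than first introducing $\Rren_{\rd M,N_z}$ and then trying to undo it.
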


We set
\begin{align} \label{eq: varphi and bracket}
 \varphi(z) \seteq \prod_{s=0}^\infty (1-\tp^{s}z)\in\ko[[z]]\subset\corh[[z]].
\end{align}
Here $\tp\seteq p^{*\,2}=q^{2\ang{c,\rho}}=q^{2\sum_{i\in I}\mathsf{c}_i}$.
We have
$$\vphi(z)=\sum_{m=0}^\infty(-1)^m\;\dfrac{\tp^{m(m-1)/2}}{\prod_{k=1}^m(1-\tp^k)}\;z^m.$$

For $i,j \in I_0$, set
\eq &&\ba{rl}
a_{i,j}(z) &\seteq a_{V(\varpi_i),V(\varpi_j)}(z),\\
d_{i,j}(z) &\seteq d_{V(\varpi_i),V(\varpi_j)}(z).\ea
\eneq
Then the universal coefficient $a_{i,j}(z)$ is obtained as follows (see \cite[Appendix A]{AK}):
\begin{align}\label{eq: aij}
a_{i,j}(z) \equiv \dfrac{\prod_{\mu} \varphi(p^*y_\mu z) \varphi(p^*\overline{y_\mu}z) }{\prod_{\nu} \varphi(x_\nu z) \varphi(p^{*2}\overline{x_\nu}z)} \qquad {\rm mod} \ \ko[z^{\pm1}]^\times,
\end{align}
where
$$d_{i,j}(z)=\prod_{\nu}(z-x_\nu) \quad \text{ and } \quad d_{i^*,\,j}(z)=\prod_{\mu}(z-y_\mu).$$

\begin{example}
For the fundamental representations $V(\varpi_i)$'s over $U_q'(A_{n-1}^{(1)})$
($i \in I_0=\st{1,\ldots,n-1}$),
the denominators $d_{i,j}(z) \seteq d_{V(\varpi_i),V(\varpi_j)}(z)$
and the universal coefficients of $ a_{i,j}(z)\seteq a_{V(\varpi_i),V(\varpi_j)}(z)$ are given as follows:
\begin{align*}
d_{i,j}(z)= \hspace{-3ex}\displaystyle\prod_{s=1}^{ \min(i,j,n-i,n-j)} \hspace{-3ex} \big(z-(-q)^{2s+|i-j|}\big) \quad \text{and} \quad   a_{i,j}(z)\equiv \dfrac{[\,|i-j|\,]\,[2n-|i-j|\,]}{[i+j]\,[2n-i-j]} \quad {\rm mod} \ \ko[z^{\pm1}]^\times
\end{align*}
where $[a] \seteq \varphi((-q)^{a}z)$. Note that $p^* = (-q)^{n}$ in this case.
\end{example}

\begin{remark}
The denominators of the normalized $R$-matrices $d_{i,j}(z)$ and hence the universal coefficients $a_{i,j}(z)$ were calculated in~\cite{AK,DO94,KKK15B,Oh15} for the classical affine types
and in \cite{OT19} for the exceptional affine types (see also~\cite{FR15,Fu18,KMN2,KMOY07,Ya98}).
\end{remark}

\begin{lemma}[{\cite[Lemma 3.10]{KKKO15}}]\label{lem: middle} Let $M_k$ be a module in $\uqm$ $(k=1,2,3)$.
Let $X$ be a $\uqpg$-submodule of $M_1\tens M_2$ and
$Y$ a $\uqpg$-submodule of $M_2\tens M_3$
such that $X\tens M_3\subset M_1\tens Y$ as submodules of
$M_1\tens M_2\tens M_3$.
Then there exists a $\uqpg$-submodule $N$ of $M_2$
such that
$X\subset M_1\tens N$ and $N\tens M_3\subset Y$.
\end{lemma}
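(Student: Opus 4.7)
The plan is to build $N$ as the canonical ``projection'' of $X$ onto the second tensor factor using the left dual $M_1^*$. Since every module in $\uqm$ is finite-dimensional, $M_1^*$ is a well-defined $\uqpg$-module and the evaluation $M_1^* \tens M_1 \to \one$ is $\uqpg$-linear; consequently the contraction
\begin{equation*}
\Psi \colon M_1^* \tens M_1 \tens M_2 \to M_2, \qquad \phi \tens u \tens v \longmapsto \phi(u)\, v
\end{equation*}
is $\uqpg$-linear. I take $N \seteq \Psi\bigl(M_1^* \tens X\bigr) \subset M_2$, which is automatically a $\uqpg$-submodule of $M_2$ as the image of a $\uqpg$-linear map from the submodule $M_1^* \tens X$ of $M_1^* \tens M_1 \tens M_2$.

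For the inclusion $X \subset M_1 \tens N$, I fix a basis $\st{e_\alpha}$ of $M_1$ with dual basis $\st{e_\alpha^*}\subset M_1^*$. Any $x \in X$ has a unique expansion $x = \sum_\alpha e_\alpha \tens v_\alpha$ with $v_\alpha \in M_2$, and the identity $v_\alpha = \Psi(e_\alpha^* \tens x)$ places each $v_\alpha$ in $N$; hence $x \in M_1 \tens N$.

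For the inclusion $N \tens M_3 \subset Y$, I take a spanning element $v = \Psi(\phi \tens x) = (\phi \tens \id_{M_2})(x)$ of $N$, with $\phi \in M_1^*$ and $x \in X$, together with any $u \in M_3$. Then $v \tens u = (\phi \tens \id_{M_2} \tens \id_{M_3})(x \tens u)$, and by hypothesis $x \tens u \in X \tens M_3 \subset M_1 \tens Y$; expanding $x \tens u = \sum_\alpha e_\alpha \tens y_\alpha$ with $y_\alpha \in Y$ yields $v \tens u = \sum_\alpha \phi(e_\alpha)\, y_\alpha \in Y$, as required.

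The only real obstacle I anticipate is verifying that $N$ is genuinely a $\uqpg$-submodule of $M_2$: the naive definition of $N$ as ``the span of all basis-slices of $X$ with respect to $M_1$'' is not manifestly stable under the $\uqpg$-action, since the coproduct on $\uqpg$ is not cocommutative. This obstruction is dissolved by the invariant description above, which realizes $N$ as the image of the $\uqpg$-linear contraction $\Psi|_{M_1^*\tens X}$.
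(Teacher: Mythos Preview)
The paper does not give its own proof of this lemma; it is quoted with a citation to \cite[Lemma~3.10]{KKKO15}. Your argument is correct and is essentially the same as the proof in that reference: one defines $N$ as the image of $M_1^*\tens X$ under the contraction $M_1^*\tens M_1\tens M_2\to M_2$ coming from the evaluation $M_1^*\tens M_1\to\one$, and the two required inclusions follow exactly as you wrote.
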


\Prop[{\cite[Corollary 3.11]{KKKO15}}] \label{prop:rcomp}\hfill
\bnum
\item Let $M_k$ be a module in $\uqm$ $(k=1,2,3)$, and let $\varphi_1\col L\to M_2\tens M_3$ and $\varphi_2\col M_1\tens M_2\to L'$
be {\em non-zero} morphisms. Assume further that $M_2$ is a simple module.
Then the composition
\begin{align*}
M_1\tens L \To[M_1\tens \varphi_1] M_1\tens M_2\tens M_3\To[\varphi_2\tens M_3]
L'\tens M_3
\end{align*}
does not vanish.
\item Let $M$, $N_1$ and $N_2$ be non-zero modules in $\uqm$, and assume that
$\Runiv_{N_k,M_z}$ is \rr for $k=1,2$.
Then
$\Runiv_{N_1\tens N_2,M_z}$ is \rr, and
we have
$$\dfrac{c_{N_1,M}(z)c_{N_2,M}(z)}{c_{N_1\tens N_2, M}(z)}\in \cz.$$
If we assume further that $M$ is simple, then we have
$$   c_{N_1 \otimes N_2,M}(z)\equiv c_{N_2,M}(z) c_{N_1,M}(z) \mod \cz^\times $$
and the following diagram commutes up to a constant multiple{\rm:}
\eq
\xymatrix@C=12ex
{ N_1\tens N_2 \tens M \ar[r]_{N_1 \tens \rmat{N_2,M}}\ar@/^2pc/[rr]|-{\rmat{N_1\tens N_2, \ M}}
&N_1\tens M\tens N_2\ar[r]_{\rmat{N_1,M}\tens\, N_2}& M \tens N_1\tens N_2.
}\label{eq:tensr}
\eneq

\item Let $M$, $N_1$ and $N_2$ be  non-zero modules in $\uqm$, and assume that
$\Runiv_{M,\, (N_k)_z}$ is \rr for $k=1,2$.
Then
$\Runiv_{M,\,(N_1\tens N_2)_z}$ is \rr, and
we have
$$\dfrac{c_{M,N_1}(z)c_{M,N_2}(z)}{c_{M,N_1\tens N_2}(z)}\in \cz.$$
If we assume further that $M$ is simple, then
we have
$$   c_{N_1 \otimes N_2,M}(z)\equiv c_{N_2,M}(z) c_{N_1,M}(z) \mod \cz^\times $$
and
the following diagram commutes up to a constant multiple{\rm:}
\eq
\xymatrix@C=12ex
{M\tens N_1\tens N_2  \ar[r]_{\rmat{M,N_1}\tens N_2}
\ar@/^2pc/[rr]|-{\rmat{M,\;N_1\tens N_2}}
&N_1\tens M\tens N_2\ar[r]_{N_1\tens\rmat{M,N_2}\tens\, N_2}&
\tens N_1\tens N_2\tens M.
}
\eneq
\ee
\enprop

\begin{proof}
(i) and the commutativity of \eqref{eq:tensr} are
 nothing but \cite[Corollary 3.11]{KKKO15}.
Since
$$\scalebox{.8}{
\xymatrix@C=5ex
{\cor((z))\tens_{\cz}(N_1\tens N_2 \tens M_z)
\ar[rd]_{N_1 \tens \Runiv_{N_2,M_z}}\ar[rr]^{\Runiv_{N_1\tens N_2, \ M}}
&& \cor((z))\tens_{\cz}( M_z \tens N_1\tens N_2)\\
&\cor((z))\tens_{\cz}(N_1\tens M_z\tens N_2)
\ar[ru]_{\Runiv_{N_1,M_z}\tens\, N_2}
}
}$$
commutes,
the diagram
$$
\xymatrix@C=20ex
{N_1\tens N_2 \tens M_z\ar[r]_{N_1 \tens c_{N_2,M}(z)\Runiv_{N_2,M_z}}
\ar@/^2pc/[rr]|-{c_{N_2,M}(z) c_{N_1,M}(z)\Runiv_{N_1\tens N_2, \ M_z}}
&N_1\tens M_z\tens N_2\ar[r]_{ c_{N_1,M}(z)\Runiv_{N_1,M_z}\tens\, N_2}&
M_z \tens N_1\tens N_2}
$$
commutes.
Hence $\Runiv_{N_1\tens N_2,M_z}$ is \rr, and
we have ${c_{N_1,M}(z)c_{N_2,M}(z)}\in \cz{c_{N_1\tens N_2, M}(z)}$.

If $M$ is simple, then (i) implies that
$c_{N_2,M}(z) c_{N_1,M}(z)\Runiv_{N_1\tens N_2, \ M_z}$ never vanishes at any $z=a\in\cor^\times$.
Hence $c_{N_2,M}(z) c_{N_1,M}(z)\Runiv_{N_1\tens N_2, \ M_z}\equiv
\Rren_{N_1\tens N_2, \ M_z} \mod \cz^\times$, which implies
$c_{N_1 \otimes N_2,M}(z)\equiv c_{N_2,M}(z) c_{N_1,M}(z) \mod \cz^\times$.

\smallskip
The proof of (iii) is similar.
\end{proof}

\Prop\label{prop:subqc}
Let $M$ and $N$ be modules in $\uqm$, and let
$M'$ and $N'$ be a non-zero subquotient of $M$ and $N$, respectively.
Assume that $\Runiv_{M,N_z}$ is \rr.
Then $\Runiv_{M',N'_z}$ is \rr, and
$c_{M,N}(z)/c_{M'.N'}(z)\in\cz$.
\enprop
\Proof
We shall show that
$\Runiv_{M',N_z}$ is \rr and
$c_{M,N}(z)/c_{M'.N}(z)\in\cz$ for a non-zero quotient $M'$ of $M$.
We have
a commutative diagram
\eqn
\xymatrix@R=5ex@C=13ex{
\cor((z))\tens_{\cz}(M\otimes N_z) \ar@{->>}[d]
\ar[r]^{c_{M,N}(z)\Runiv_{M,N_z}} &
\cor((z))\tens_{\cz}(N_z\otimes M)\ar@{->>}[d]\\
\cor((z))\tens_{\cz}(M'\otimes N_z)
\ar[r]^{c_{M,N}(z)\Runiv_{M',N_z}} &
\cor((z))\tens_{\cz}(N_z\otimes M')
}
\eneqn
which induces
\eqn
\xymatrix@C=14ex{
M\otimes N_z \ar@{->>}[d]
\ar[r]^{\Rren_{M,N_z}} &
N_z\otimes M\ar@{->>}[d]\\
M'\otimes N_z
\ar[r]^{c_{M,N}(z)\Runiv_{M',N_z}} &
N_z\otimes M'\;.
}
\eneqn
Hence $\Runiv_{M',N_z}$ is \rr and
$c_{M,N}(z)\in c_{M'.N}(z)\cz$.

Similarly $\Runiv_{M',N_z}$ is \rr and
$c_{M,N}(z)/c_{M'.N}(z)\in\cz$ for any non-zero submodule of $M'$ of $M$,
and hence for any  non-zero subquotient of $M'$ of $M$.

We can argue similarly for non-zero subquotients $N'$ of $N$.
\QED

\begin{theorem}[{\cite{KKKO15}}]\label{thm: KKKo15 main} Let $M$ and $N$ be simple modules in $\uqm$ and assume that one of them is real. Then
\bnum
\item $\Hom(M\tens N,N\tens M)=\cor\,\rmat{M,N}$.
\label{runiq}
\item $M \otimes N$ and $N \otimes M$ have simple socles and simple heads.
\item Moreover, ${\rm Im}(\rmat{M,N})$ is isomorphic to the head of $M \otimes N$ and the socle of $N \otimes M$.
\item  $M \otimes N$ is simple whenever its head and its socle
are isomorphic to each other.
\end{enumerate}
\end{theorem}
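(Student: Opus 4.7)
The plan is to exploit the reality of one of $M, N$---say $M$, so that $M\tens M$ is simple---together with the generic simplicity of $M\tens N_z$ as a $\cor(z)\tens\uqpg$-module (\cite[Proposition 9.5]{Kas02}), which gives by Schur's lemma $\Hom_{\cor(z)\tens\uqpg}(M\tens N_z, N_z\tens M)=\cor(z)\,\Rnorm_{M,N_z}$. Part (i) is the heart of the argument; (ii)--(iv) then follow by increasingly formal consequences.

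For (i), I would compare an arbitrary $\varphi \in \Hom(M\tens N, N\tens M)$ with $\rmat{M,N}$ by forming the two morphisms
\[ \varphi_1 = (\varphi\tens M)\circ(M\tens \rmat{M,N}), \qquad \varphi_2 = (\rmat{M,N}\tens M)\circ(M\tens \varphi), \]
both lying in $\Hom(M\tens M\tens N, N\tens M\tens M)$. Deforming $N$ to $N_z$ and using the compatibility~\eqref{eq: commutativity} to factor $\Runiv_{M\tens M, N_z}$, the simplicity of $M\tens M$ (so that $\End(M\tens M)=\cor$ and $\rmat{M,M}$ acts as a scalar) lets one match both $\varphi_1$ and $\varphi_2$ against $\rmat{M\tens M, N}$ up to scalars; stripping off the spectator $M\tens M$ factor yields $\varphi = c\cdot \rmat{M,N}$ for a constant $c\in\cor$.

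For (ii) and (iii), once (i) is in hand, the image of any nonzero morphism $M\tens N \to N\tens M$ equals $\Im(\rmat{M,N})$, since all such morphisms are scalar multiples of $\rmat{M,N}$. Set $S\seteq \Im(\rmat{M,N})$. If the head of $M\tens N$ had two non-isomorphic simple constituents $T_1, T_2$, an analysis of which simple subquotients of $M\tens N$ can map nontrivially into $N\tens M$---combined with the commutativity of $K(\uqm)$, which ensures the composition series of $M\tens N$ and $N\tens M$ agree---leads to two linearly independent maps $M\tens N \to N\tens M$, contradicting (i). Hence $\hd(M\tens N)$ is simple, and the symmetric argument applied to $(N,M)$---covered by the same hypothesis---gives the simplicity of $\soc(N\tens M)$; since $S$ is at once a quotient of $M\tens N$ with simple head and a submodule of $N\tens M$ with simple socle, $S\simeq \hd(M\tens N)\simeq \soc(N\tens M)$, yielding (iii) and completing (ii).

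For (iv), formula~\eqref{eq:RR} specializes at $z=1$ to $\rmat{N,M}\circ\rmat{M,N} = \lambda\,\id_{M\tens N}$ with $\lambda = d_{M,N}(1)\,d_{N,M}(1)$ up to a unit. If $\lambda\ne 0$, then $\rmat{M,N}$ is an isomorphism whose image is simple by (iii), so $M\tens N$ is simple. If $\lambda=0$, then $M\tens N$ is not simple by Theorem~\ref{Thm: basic properties}\ref{it:comm}; one shows that the hypothesis $\hd(M\tens N)\simeq \soc(M\tens N)$ is incompatible with this via a careful analysis of $\End(M\tens N)$: composing an arbitrary endomorphism with $\rmat{M,N}$ and $\rmat{N,M}$ (using (i) applied to both orderings) constrains how it acts on the head versus the socle, and the assumed isomorphism between them together with the non-simplicity of $M\tens N$ yields the desired contradiction. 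The principal obstacle is (i): rigorously descending the one-dimensionality from the generic $\cor(z)\tens\uqpg$-level to the specialized level at $z=1$ requires the rationally renormalizable structure recalled in Section~\ref{sec:Rmat}.
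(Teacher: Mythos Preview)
The paper itself does not prove this theorem: parts (ii)--(iv) are taken directly from \cite{KKKO15}, and for (i) the authors only remark that it ``can be proved similarly to the quiver Hecke algebra case given in \cite[Theorem~2.11]{KKKO18}.'' So there is no in-paper argument to compare against, only the cited sources.

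Your outline has two genuine gaps. First, in your argument for (i), the step ``match both $\varphi_1$ and $\varphi_2$ against $\rmat{M\tens M,\,N}$ up to scalars'' is circular: it presupposes that $\Hom\bl (M\tens M)\tens N,\,N\tens(M\tens M)\br$ is one-dimensional, which is precisely statement (i) for the pair $(M\tens M,N)$. The deformation to $N_z$ does not help here, because an arbitrary $\varphi\in\Hom(M\tens N,N\tens M)$ does not extend to a $\cz$-linear morphism $M\tens N_z\to N_z\tens M$; only $\rmat{M,N}$ arises by specialization from the generic $\cor(z)$-level, so the one-dimensionality of $\Hom_{\cor(z)\tens\uqpg}(M\tens N_z,N_z\tens M)$ gives no control over morphisms that live only at $z=1$.

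Second, your route to (ii) inverts the logical order used in \cite{KKKO15}. There, (ii) and (iii) are proved \emph{directly} from the reality of $M$ via the middle lemma (Lemma~\ref{lem: middle} here): for a simple submodule $S\subset N\tens M$ one tensors by $M$, uses that $\rmat{M,M}$ is a scalar to produce an inclusion $S\tens M\subset M\tens(\rmat{N,M})^{-1}(S)$, and then Lemma~\ref{lem: middle} squeezes out a submodule of $N$ forcing $S=\Im(\rmat{M,N})$. Your proposed argument---two non-isomorphic simple quotients $T_1,T_2$ of $M\tens N$ yield two independent maps to $N\tens M$---fails because a composition factor $T_i$ of $N\tens M$ need not be a \emph{submodule} of $N\tens M$, so there is no evident nonzero morphism $M\tens N\twoheadrightarrow T_i\hookrightarrow N\tens M$. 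Once (ii) and (iii) are in hand by the middle-lemma argument, (i) follows (any nonzero $f$ has image containing the simple socle and factors through the simple head, hence has image exactly $\Im(\rmat{M,N})$, and Schur finishes), and (iv) is then as you describe.
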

Note that \eqref{runiq} is not proved in \cite{KKKO15} but it
can be proved similarly to
the quiver Hecke algebra case given in \cite[Theorem 2.11]{KKKO18}.

\smallskip
For modules $M$ and $N$ in $\uqm$, we denote by $M \hconv N$ and $M \sconv N$ the head and the socle of $M \otimes N$, respectively.

\section{New invariants for pairs of modules} \label{sec: New invs}
In this section, we introduce new invariants for
pairs of $\uqpg$-modules by using $R$-matrices and investigate their properties.
These invariants have similar properties to those in the quiver Hecke algebra case.

Recall that
$$ \text{$\tp \seteq p^{*2}=q^{2\ang{c,\rho}}$ and
$\vphi(z)=\prod_{s\in\Z_{\ge0}}(1-\tp^sz)\in\cor[[z]]$.}$$
We set
$$  \text{$\tp^{S} \seteq \{ \tp^k \ | \ k \in S\}$
\quad for a subset $S$ of $\Z$.}$$

\begin{definition}
We define the subset $\G$ of $\cor((z))^\times$ as follows:
\begin{align}\label{eq: mathcal G}
\G \seteq \left\{ cz^m \prod_{a \in \ko^\times} \varphi(az)^{\eta_a} \ \left|  \
\begin{matrix} \ c \in \ko^\times, \ m \in \Z , \\
\eta_a \in \Z \text{ vanishes except finitely many $a$'s. } \end{matrix} \right. \right\}.
\end{align}
\end{definition}
Note that $\G$ forms a group with respect to the multiplication. We have $\ko(z)^\times\subset \G$.
Note also that for $f(z)= cz^m \prod_{a \in \ko^\times} \varphi(az)^{\eta_a}$,
$\{\eta_a\}_{a\in\ko^\times}$ is determined by $f(z)$ since
\eqn
\dfrac{f(z)}{f(\tp z)}=(\tp)^{-m}\prod_{a\in\ko^\times}(1-az)^{\eta_a}.
\eneqn

\Prop
Let $M$ and $N$ be modules in $\uqm$.
If $\Runiv_{M,N_z}$ is \rr, then
the \rc $c_{M,N}(z)$ belongs to $\G$.
\enprop
\Proof
Let us take a simple submodule $M'$ of $M$ and
a simple submodule $N'$ of $N$.
Then, Proposition~\ref{prop:subqc} implies that
$c_{M,N}(z)/c_{M'N'}(z)\in\cor(z)^\times\subset\G$.
Hence the assertion follows from the following lemma.
\QED

\begin{lemma} For simple modules $M$ and $N$ in $\uqm$, the universal coefficient $a_{M,N}(z)$ as well as the \rc $c_{M,N}(z)$ is contained in $\G$.
\end{lemma}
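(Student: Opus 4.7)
The plan is to reduce the assertion to the case of fundamental modules, where the explicit product formula \eqref{eq: aij} exhibits the universal coefficient as an element of $\G$ by inspection. I first observe that $\G$ is a multiplicative subgroup of $\cor((z))^\times$ containing all non-zero rational functions $\cor(z)^\times$: since $\cor$ is algebraically closed, every non-zero Laurent polynomial factors as $cz^m\prod_i(1-a_iz)^{n_i}$, and the identity $(1-az)=\varphi(az)/\varphi(\tp az)$ expresses each linear factor as a ratio of two $\varphi$'s, so $\cz\setminus\{0\}\subset\G$. Since $c_{M,N}(z)\equiv d_{M,N}(z)/a_{M,N}(z)\mod\cor[z^{\pm1}]^\times$ and $d_{M,N}(z)\in\cor[z]\subset\G$, proving $a_{M,N}(z)\in\G$ and proving $c_{M,N}(z)\in\G$ are equivalent, so I will focus on $c_{M,N}(z)$.

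For the base case, formula \eqref{eq: aij} directly gives $a_{V(\varpi_i),V(\varpi_j)}(z)\in\G$, hence $c_{V(\varpi_i),V(\varpi_j)}(z)\in\G$. Proposition~\ref{prop:cstar} then extends this to $c_{V(\varpi_i)_a,V(\varpi_j)_b}(z)\in\G$ for all $a,b\in\cor^\times$, via the closure of $\G$ under $z\mapsto az$. For general simple modules, Theorem~\ref{Thm: basic properties}~\eqref{item5} realizes $M$ and $N$ as non-zero subquotients of tensor products of fundamentals,
$$W_M=V(\varpi_{i_1})_{a_1}\tens\cdots\tens V(\varpi_{i_s})_{a_s}\qtq W_N=V(\varpi_{j_1})_{b_1}\tens\cdots\tens V(\varpi_{j_t})_{b_t}.$$

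Iterating Proposition~\ref{prop:rcomp}~(ii) and (iii) starting from the fundamental case gives rational renormalizability of $\Runiv_{W_M,(W_N)_z}$ together with the relation that
$$\Bigl(\prod_{k=1}^{s}\prod_{l=1}^{t} c_{V(\varpi_{i_k})_{a_k},V(\varpi_{j_l})_{b_l}}(z)\Bigr)\Big/ c_{W_M,W_N}(z)\ \in\ \cz;$$
since each factor of the numerator lies in $\G$ and $\cz\setminus\{0\}\subset\G$, this forces $c_{W_M,W_N}(z)\in\G$. Finally, applying Proposition~\ref{prop:subqc} twice — once on each side — yields that $\Runiv_{M,N_z}$ is rationally renormalizable with $c_{W_M,W_N}(z)/c_{M,N}(z)\in\cz$, whence $c_{M,N}(z)\in\G$. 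The main technical point will be to keep track of the various ambiguity subgroups $\cz$, $\czt$, and $\cor[z^{\pm1}]^\times$ that arise at each step of the reduction, but since all of them are contained in $\G$, the argument closes cleanly.
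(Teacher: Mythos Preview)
Your proof is correct and follows essentially the same approach as the paper: reduce to fundamental modules via \eqref{eq: aij}, then use Proposition~\ref{prop:rcomp} for tensor products and Proposition~\ref{prop:subqc} for subquotients to reach arbitrary simples. The paper phrases this as an induction on $t+t'$ rather than building the full tensor products $W_M$, $W_N$ at once, but the content is the same.
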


\begin{proof}
Let us write $M=S((i_1,a_1),\ldots,(i_t,a_t))$ and $N=S((j_1,b_1),\ldots,(j_{t'},b_{t'}))$. When $t+t'=2$, $a_{M,N}(z)$ is nothing but
$a_{i,j}(b_1/a_1 z)$ in~\eqref{eq: aij}, and our assertion holds.
Then the induction on $t+t'$ proceeds
by Proposition~\ref{prop:rcomp} and  Proposition~\ref{prop:subqc}
\end{proof}

For each subset $S$ of $\Z$, we can construct a group homomorphism from $\Bg$
to the additive group $\Z$ by associating
the sum of  exponents $\eta_a$ such that $a \in \tp^S$.
For instance, by taking $S$ as $\Z$ or $\Z_{\le 0}$, we define the group homomorphisms
\begin{align*}
\tD \col   \Bg \to  \Z \quad \text{ and } \quad \Di \col   \Bg \to  \Z,
\end{align*} by
$$
\tD(f(z)) = \sum_{a \in \tp^{\,\Z_{\le 0}}}\eta_a \quad \text{ and } \quad \Di(f(z)) = \sum_{a \in \tp^{\,\Z}} \eta_a.
$$
for $f(z)=cz^m \prod \varphi(az)^{\eta_a} \in \Bg$. As their linear  combination,
we introduce the group homomorphism
\begin{align*}
\Deg \col   \Bg \to  \Z \quad \text{by} \quad  \Deg = 2\tD - \Di,
\end{align*}
namely,
\begin{align}\label{eq: Lambda def}
\Deg(f(z)) = \sum_{a \in \tp^{\,\Z_{\le 0}} }\eta_a - \sum_{a \in \tp^{\,\Z_{> 0}} } \eta_a.
\end{align}

Recall Convention~\ref{convention}~\eqref{conv:zero}.

\Lemma\label{lem:properties of Deg}
Let $f(z)\in\Bg$.
\bnum
\item \label{tdeg:1} If $f(z)\in \cor(z)$, then we have
$$\tD(f(z))=\zero_{z=1}f(z),\quad\Deg^\infty(f(z))=0,\text{\ and\;\ }
\Deg(f(z))=2\zero_{z=1}f(z).$$
\item If $g(z), \;h(z)\in\G$ satisfy
$g(z)/h(z)\in\cz$, then $\Deg(h(z))\le\Deg(g(z))$.
\label{degpol}
\item $\Deg^\infty f(z)=  -\Deg\bl f(\tp^{n}z)\br=\Deg\bl f(\tp^{-n}z)\br$
 for $n \gg 0$. \label{it:ninf}
\item If $\Deg^\infty\bl f(cz)\br=0$ for any $c\in\cor^\times$, then
$f(z)\in\cor(z)$.
\ee
\enlemma
\Proof
We may assume $f(z)=\prod_{a\in\cor^\times} \varphi(az)^{\eta_a}$.

\smallskip\noi
(i)\ For $a\in\cor^\times$, we have
\eqn\tD(1-az)&&=\tD\bl\vphi(az)/\vphi(\tp az)\br\\&&
=\delta(a\in\tp^{\Z\le0})-\delta(\tp a\in\tp^{\Z\le0})=\delta(a=1)=\zero_{z=1}(1-az)
\eneqn
and $$\Di(1-az)=\Di\bl\vphi(az)/\vphi(\tp az)\br
=1-1=0.$$

\smallskip\noi
(ii) follows from (i).

\smallskip\noi
(iii) We have
$$\Deg\bl f(\tp^{n}z)\br =  \sum_{a \tp^n \in \tp^{\Z \le 0}} \eta_a - \sum_{a\tp^n \in \tp^{\Z > 0}} \eta_a.$$
Hence we have $\Deg\bl f(\tp^{n}z)\br=-\sum_{a\in \tp^{\Z}} \eta_a$ if $n\gg0$ and
$\Deg\bl f(\tp^{n}z)\br=\sum_{a\in \tp^{\Z}} \eta_a$ if $n\ll0$.

\smallskip\noi
(iv)
By the assumption, we can easily see that
$f(z)$ is a product of functions of the form
$\vphi(az)/\vphi(\tp^maz)$ ($a\in\cor^\times$, $m\in\Z$).
Then the result follows from
$\vphi(az)/\vphi(\tp^maz)\in\cor(z)$.
\QED

\Rem
Any $f(z)\in\G$
extends to a meromorphic function
on $$\st{(z,q^{1/\ell})\in\C\times\C\;;\;|q^{1/\ell}|<\eps}$$
 for some $\ell\in\Z_{>0}$
and $\eps>0$.
Hence $\zero_{z=\tp^k}f(z)$, the order of zero of $f(z)$ at $z=\tp^k$,
 makes sense for any $k\in\Z$.
Then one has
$\tD (f(z))=\zero_{z=1}f(z)$.
\enrem

Using the homomorphisms $\Deg$, $\tD$ and $\Di$, we define the new invariants for a pair of modules $M$, $N$ in $\uqm$ such that $\Runiv_{M,N_z}$ is \rr.

\begin{definition} \label{def: Lams}
For non-zero modules $M$ and $N$ in $\uqm$ such that $\Runiv_{M,N_z}$ is \rr, we define the integers $\Lambda(M,N)$, $\tLa(M,N)$ and $\Lambda^\infty(M,N)$ as follows:
$$\  \Lambda(M,N)=\Deg(c_{M,N}(z)),\ \tLa(M,N)=\tD(c_{M,N}(z)), \ \Lambda^\infty(M,N)=\Deg^\infty(c_{M,N}(z)).$$
\end{definition}
Hence, we have
\eq\tLa(M,N)=\dfrac{1}{2}\Bigl(\La(M,N)+\Lambda^\infty(M,N)\Bigr).
\label{eq:LtL}\eneq

\Lemma\label{lem:Lstar}
For any simple modules $M$, $N$ in $\uqm$ and $x\in\cor^\times$, we have
\eqn
&&\La(M,N)=\La(M^*,N^*)=\La(\rd M,\rd N)=\La(M_x,N_x),\\
&&\tL(M,N)=\tL(M^*,N^*)=\tL(\rd M,\rd N)=\tL(M_x,N_x),\\
&&\Li(M,N)=\Li(M^*,N^*)=\Li(\rd M,\rd N)=\Li(M_x,N_x).
\eneqn
\enlemma
\Proof
They follow from Proposition~\ref{prop:cstar}.
\QED

\begin{lemma} \label{lem: properties of La}
Let $M$ and $N$ be non-zero modules in $\uqm$.
\bnum
\item\label{di0} If $M$ and $N$ are simple, then we have
$\Lambda^\infty(M,N)  = \Deg^\infty(c_{M,N}(z)) =-\Deg^\infty(a_{M,N}(z))$.
\item  If $\Runiv_{M,N_z}$ is \rr, then
\begin{align*}
\Lambda^\infty(M,N) & =  -\Lambda(M,N_{\tp^{n}})=\Lambda(M,N_{\tp^{-n}}) \quad \text{ for $n \gg 0$}.
\end{align*}
\end{enumerate}
\end{lemma}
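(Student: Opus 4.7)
Both statements are essentially bookkeeping consequences of the definitions of the homomorphisms $\Deg$, $\Deg^\infty$ on $\G$ together with the ratio formula $c_{M,N}(z) \equiv d_{M,N}(z)/a_{M,N}(z) \mod \ko[z^{\pm 1}]^\times$ (for simple $M,N$) and the shift formulas in Proposition~\ref{prop:cstar}. The plan is to treat the two parts separately.

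For (i), the first equality $\Lambda^\infty(M,N) = \Deg^\infty(c_{M,N}(z))$ is merely Definition~\ref{def: Lams}, so the content is the second equality. Since $M$ and $N$ are simple, I would invoke the identity
\[
c_{M,N}(z)\equiv \dfrac{d_{M,N}(z)}{a_{M,N}(z)} \mod \ko[z^{\pm 1}]^\times
\]
recalled in Subsection~\ref{sec:Rmat}. The denominator $d_{M,N}(z)$ lies in $\ko[z]\subset \ko(z)$, and any unit of $\ko[z^{\pm 1}]$ also lies in $\ko(z)$. By Lemma~\ref{lem:properties of Deg}\eqref{tdeg:1}, $\Deg^\infty$ vanishes on $\ko(z)$, and by construction it is a group homomorphism on $\G$. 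Applying $\Deg^\infty$ to the displayed identity then yields $\Deg^\infty(c_{M,N}(z))=-\Deg^\infty(a_{M,N}(z))$, as required.

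For (ii), I would start from Proposition~\ref{prop:cstar}, which gives $c_{M,N_a}(z)=c_{M,N}(az)$ for every $a\in\cor^\times$ once $\Runiv_{M,N_z}$ is rationally renormalizable (note that the existence of the renormalizing coefficient for $M$ and $N_a$ is also contained in that proposition). Specializing $a=\tp^{\pm n}$ and applying $\Deg$ gives
\[
\Lambda(M,N_{\tp^{\pm n}})=\Deg\bl c_{M,N}(\tp^{\pm n}z)\br.
\]
Next I would invoke Lemma~\ref{lem:properties of Deg}\eqref{it:ninf}, which states that $\Deg^\infty f(z)=-\Deg\bl f(\tp^{n}z)\br = \Deg\bl f(\tp^{-n}z)\br$ for $n\gg 0$ and any $f(z)\in\G$, applied to $f(z)=c_{M,N}(z)$. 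Combining the two yields $\Lambda^\infty(M,N)=-\Lambda(M,N_{\tp^n})=\Lambda(M,N_{\tp^{-n}})$ for $n\gg 0$.

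There is no real obstacle: the whole argument is a direct assembly of Lemma~\ref{lem:properties of Deg}, Proposition~\ref{prop:cstar}, and the ratio formula from Subsection~\ref{sec:Rmat}. The only point one needs to be careful about is that in (i) simplicity of $M$ and $N$ is essential, since only then do we have the normalized $R$-matrix and the decomposition $c_{M,N}=d_{M,N}/a_{M,N}$; in (ii) no simplicity is needed, since both steps (applying $\Deg$ to a shifted argument and the asymptotic formula of Lemma~\ref{lem:properties of Deg}\eqref{it:ninf}) work for arbitrary elements of $\G$.
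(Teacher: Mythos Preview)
Your proposal is correct and follows essentially the same approach as the paper: for (i) you use that $c_{M,N}(z)a_{M,N}(z)=d_{M,N}(z)\in\cor(z)$ (up to a unit) together with the vanishing of $\Deg^\infty$ on $\cor(z)$ from Lemma~\ref{lem:properties of Deg}\eqref{tdeg:1}, and for (ii) you combine the shift formula $c_{M,N_{\tp^n}}(z)=c_{M,N}(\tp^n z)$ from Proposition~\ref{prop:cstar} with Lemma~\ref{lem:properties of Deg}\eqref{it:ninf}. This is exactly the paper's argument.
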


\begin{proof}
(i) follows from $a_{M,N}(z)c_{M,N}(z)\in\cor(z)$ and Lemma~\ref{lem:properties of Deg} \eqref{degpol}.

\noindent \noi
(ii) follows from $c_{M,N_{\tp^n}}(z)=c_{M,N}(\tp^nz)$
and Lemma~\ref{lem:properties of Deg} \eqref{it:ninf}.
\end{proof}

\Prop\label{prop:subqcL}
Let $M$ and $N$ be modules in $\uqm$, and let
$M'$ and $N'$ be a non-zero subquotient of $M$ and $N$, respectively.
Assume that $\Runiv_{M,N_z}$ is \rr.
Then $\Runiv_{M',N'_z}$ is \rr, and
$$\La(M',N')\le \La(M,N)\qtq \Li(M',N')=\Li(M,N).$$
\enprop
\Proof
They follow from Proposition~\ref{prop:subqc} and Lemma~\ref{lem:properties of Deg}.
\QED

\begin{lemma} \label{lem: Lambda tensor} Let $M$, $N$ and $L$
be non-zero modules in $\uqm$.
\bnum
\item
If $\Runiv_{M_,\,L_z}$ and $\Runiv_{N,\,L_z}$ are \rr,
then $\Runiv_{M\tens N_,\,L_z}$ is \rr and
$$ \La(M \otimes N,L)\le  \La(M,L)+\La(N,L)\ \text{and}\
\Li(M \otimes N,L)=\Li(M,L)+\Li(N,L).
$$
If we assume further that $L$ is simple, then the equality holds
instead of the inequality.

\vs{1ex}
\item
If $\Runiv_{L_,M_z}$ and $\Runiv_{L,N_z}$ are \rr,
then $\Runiv_{L,\;(M\tens N)_z}$ is \rr and
$$ \La(L,M \otimes N)\le  \La(L,M)+\La(L,N)
\ \text{and}\ \Li(L,M \otimes N)=\Li(L,M)+\Li(L,N).$$
If we assume further that $L$ is simple, then the equality holds
instead of the inequality.
\ee
\end{lemma}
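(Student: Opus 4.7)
The plan is to deduce both parts directly from Proposition~\ref{prop:rcomp}, combined with the properties of the group homomorphisms $\Deg$ and $\Deg^\infty$ on $\G$ recorded in Lemma~\ref{lem:properties of Deg}. The argument is essentially formal once the right tools are assembled.

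For part (i), the rational renormalizability of $\Runiv_{M\tens N,\,L_z}$ is exactly the first assertion of Proposition~\ref{prop:rcomp}(ii), which furthermore yields
\[
\dfrac{c_{M,L}(z)\,c_{N,L}(z)}{c_{M\tens N,L}(z)}\in\cz.
\]
Applying the group homomorphism $\Deg\col\G\to\Z$ and using Lemma~\ref{lem:properties of Deg}\,\eqref{degpol} (which says that $g/h\in\cz$ forces $\Deg(h)\le\Deg(g)$) gives the inequality $\La(M\tens N,L)\le \La(M,L)+\La(N,L)$. Applying $\Deg^\infty$ instead and invoking Lemma~\ref{lem:properties of Deg}\,\eqref{tdeg:1} (any element of $\cor(z)\supset\cz$ lies in the kernel of $\Deg^\infty$) upgrades this to exact additivity $\Li(M\tens N,L)=\Li(M,L)+\Li(N,L)$.

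When $L$ is simple, Proposition~\ref{prop:rcomp}(ii) sharpens the relation to $c_{M\tens N,L}(z)\equiv c_{M,L}(z)\,c_{N,L}(z)\mod\cz^\times$. Since every element of $\cz^\times=\bigsqcup_{n\in\Z}\cor^\times z^n$ has no $\vphi$-factors and so vanishes under $\Deg$, the previous inequality becomes the equality $\La(M\tens N,L)=\La(M,L)+\La(N,L)$.

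Part (ii) is proved identically, replacing Proposition~\ref{prop:rcomp}(ii) by Proposition~\ref{prop:rcomp}(iii), which governs $c_{L,N_1\tens N_2}(z)$ in terms of $c_{L,N_1}(z)c_{L,N_2}(z)$. There is no real obstacle here: the only subtlety is the asymmetry that $\Deg$ is merely sub-additive with respect to a $\cz$-relation while $\Deg^\infty$ is strictly additive on all of $\G$, which is precisely why the unconditional bound for $\La$ is only promoted to equality under the extra simplicity hypothesis on $L$ (or $M$, in part~(ii)).
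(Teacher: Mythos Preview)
Your proof is correct and follows exactly the approach the paper uses: the paper's own proof is the single sentence ``They follow from Proposition~\ref{prop:rcomp},'' and you have simply unpacked this by invoking Lemma~\ref{lem:properties of Deg} to pass from the relation $c_{M,L}(z)c_{N,L}(z)/c_{M\tens N,L}(z)\in\cz$ (resp.\ $\in\cz^\times$) to the claimed inequality (resp.\ equality) for $\La$ and the equality for $\Li$. One minor slip: in your final parenthetical you write ``(or $M$, in part~(ii))'', but the simplicity hypothesis in part~(ii) is again on $L$, not on $M$---the module playing the role of ``$M$'' in Proposition~\ref{prop:rcomp}(iii) is $L$ here.
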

\Proof
They follow from Proposition~\ref{prop:rcomp}.
\QED

\begin{proposition}\label{prop:hersubq}
Let $M$, $N$ and $L$ be non-zero modules in $\uqm$,
and let $S$ be a non-zero subquotient of $M\tens N$.
\bnum
\item
Assume that $\Runiv_{M,L_z}$ and $\Runiv_{N,L_z}$ are \rr.
Then $\Runiv_{S,L_z}$ is \rr and
$$ \Lambda(S,L)\le  \Lambda(M,L) + \Lambda(N,L)
\qtq
\Li(S,L)=\Li(M,L) + \Li(N,L).
$$
\item
Assume that $\Runiv_{L,M_z}$ and $\Runiv_{L,N_z}$ are \rr.
Then $\Runiv_{L,S_z}$ is \rr and
$$\Lambda(L,S)\le\Lambda(L,M) + \Lambda(L,N)
\qtq \Li(L,S)=\Li(L,M) + \Li(L,N).$$
\ee
\end{proposition}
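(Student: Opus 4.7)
The proof should be a straightforward composition of two earlier results: the subquotient monotonicity in Proposition~\ref{prop:subqcL} and the tensor-product estimate in Lemma~\ref{lem: Lambda tensor}. The only point worth emphasizing is that $S$ is given as a subquotient of $M\tens N$, not of $M$ or $N$ individually, so one must first upgrade the hypothesis to rational renormalizability of $\Runiv_{M\tens N,\,L_z}$ and then specialize to the subquotient $S$.

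For part (i), the plan is as follows. First, apply Lemma~\ref{lem: Lambda tensor}(i) to conclude that $\Runiv_{M\tens N,\,L_z}$ is rationally renormalizable, with
\[
\La(M\tens N,L)\le \La(M,L)+\La(N,L) \qtq \Li(M\tens N,L)=\Li(M,L)+\Li(N,L).
\]
Next, since $S$ is a non-zero subquotient of $M\tens N$ and $L$ is a (non-zero) subquotient of itself, Proposition~\ref{prop:subqcL} applies to the pair $(M\tens N,L)$ and yields rational renormalizability of $\Runiv_{S,L_z}$ together with
\[
\La(S,L)\le \La(M\tens N,L) \qtq \Li(S,L)=\Li(M\tens N,L).
\]
Chaining these two relations gives the desired inequality $\La(S,L)\le \La(M,L)+\La(N,L)$ and the equality $\Li(S,L)=\Li(M,L)+\Li(N,L)$.

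Part (ii) is handled symmetrically: use Lemma~\ref{lem: Lambda tensor}(ii) in place of (i) to transfer the hypotheses onto $\Runiv_{L,\,(M\tens N)_z}$, and then apply Proposition~\ref{prop:subqcL} with $L$ fixed on the left and $S$ a subquotient of $M\tens N$ on the right.

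There is no real obstacle; the argument is a two-step reduction. The only care needed is to note that $\Li$ is additive on tensor products (with equality, independently of simplicity assumptions) and invariant under taking non-zero subquotients, which together force the $\Li$-statement to be an equality rather than merely an inequality, while the $\La$-statement propagates only as an inequality because both ingredients produce inequalities in general.
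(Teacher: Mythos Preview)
Your proof is correct and is essentially the same as the paper's approach: the paper's proof is a single sentence citing Proposition~\ref{prop:subqcL} together with the tensor-product estimate (the paper writes ``Lemma~\ref{lem: properties of La}'' but the content actually used is that of Lemma~\ref{lem: Lambda tensor}, exactly as you invoke it). Your write-up simply spells out the two-step chain that the paper leaves implicit.
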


\begin{proof}
These assertions follow from Propositions~\ref{prop:subqcL} and
\ref{lem: properties of La}.
\end{proof}

\begin{corollary} For simple modules
$$M=S\big((i_1,a_1),\ldots, (i_\ell,a_\ell)\big) \quad \text{and}
\quad N=S\big((j_1,b_1),\ldots, (j_{\ell'},b_{\ell'}) \big) \quad \text{ in } \uqm,$$ we have
$$ \Lambda^{\infty}(M,N) = \sum_{ 1 \le \nu \le \ell, \  1 \le \mu \le \ell'} \Lambda^{\infty}(V(\varpi_{i_\nu})_{a_{\nu}},V(\varpi_{j_\mu})_{b_{\mu}}).$$
\end{corollary}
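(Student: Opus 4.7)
The proof will combine Theorem~\ref{Thm: basic properties}\eqref{item5} (which realizes any simple module as the head of an ordered tensor product of fundamental modules) with the subquotient additivity of $\Li$ established in Proposition~\ref{prop:hersubq}. The strategy is to expand the first argument of $\Li(M, N)$ into a sum over the fundamentals appearing in $M$, and then expand the second argument similarly for $N$.

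For the first step, by Theorem~\ref{Thm: basic properties}\eqref{item5}, I pick a permutation $\sigma \in \sym_\ell$ such that $M$ is isomorphic to the head, and hence to a non-zero subquotient, of
\begin{equation*}
V(\varpi_{i_{\sigma(1)}})_{a_{\sigma(1)}} \otimes \cdots \otimes V(\varpi_{i_{\sigma(\ell)}})_{a_{\sigma(\ell)}}.
\end{equation*}
Since each factor is simple, $\Runiv_{V(\varpi_{i_\nu})_{a_\nu},\, N_z}$ is rationally renormalizable. I then proceed by induction on $\ell$: at each stage I split off one factor using Proposition~\ref{prop:hersubq}(i) and invoke Lemma~\ref{lem: Lambda tensor}(i) to guarantee that $\Runiv$ of the remaining tensor product with $N_z$ is again rationally renormalizable, so the induction continues. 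Since both Lemma~\ref{lem: Lambda tensor}(i) and Proposition~\ref{prop:hersubq}(i) provide an \emph{equality} for $\Li$ (unlike the mere inequality for $\La$), the iteration yields
\begin{equation*}
\Li(M, N) = \sum_{\nu=1}^{\ell} \Li\bl V(\varpi_{i_\nu})_{a_\nu},\, N\br.
\end{equation*}

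For the second step, I repeat the argument on the second variable. For each fixed $\nu$, realize $N$ as a non-zero subquotient of an ordered tensor product of the modules $V(\varpi_{j_\mu})_{b_\mu}$ (again by Theorem~\ref{Thm: basic properties}\eqref{item5}), and apply Proposition~\ref{prop:hersubq}(ii) together with Lemma~\ref{lem: Lambda tensor}(ii) to obtain
\begin{equation*}
\Li\bl V(\varpi_{i_\nu})_{a_\nu},\, N\br = \sum_{\mu=1}^{\ell'} \Li\bl V(\varpi_{i_\nu})_{a_\nu},\, V(\varpi_{j_\mu})_{b_\mu}\br.
\end{equation*}
Substituting this into the previous display yields the corollary.

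There is no substantial obstacle here; the only thing to monitor is that the rational renormalizability hypothesis required by Proposition~\ref{prop:hersubq} is maintained at each inductive step, but this is exactly what Lemma~\ref{lem: Lambda tensor} supplies. Note also that the final double sum is manifestly independent of the orderings $\sigma$ chosen for $M$ and $N$, which is consistent with the fact that the multipair associated to a simple module is unique up to permutation by Theorem~\ref{Thm: basic properties}\eqref{item5}.
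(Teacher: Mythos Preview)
Your proposal is correct and follows precisely the approach the paper intends: the corollary is stated immediately after Proposition~\ref{prop:hersubq} without an explicit proof, and you have correctly unpacked it by combining the realization of $M$ and $N$ as heads of tensor products of fundamentals (Theorem~\ref{Thm: basic properties}\eqref{item5}) with the subquotient-additivity of $\Li$ from Proposition~\ref{prop:hersubq} and Lemma~\ref{lem: Lambda tensor}. One small streamlining: rather than framing it as an induction on $\ell$, you can simply invoke Proposition~\ref{prop:subqcL} once to pass from $M$ to the full tensor product $V(\varpi_{i_{\sigma(1)}})_{a_{\sigma(1)}}\otimes\cdots\otimes V(\varpi_{i_{\sigma(\ell)}})_{a_{\sigma(\ell)}}$, and then iterate the $\Li$-equality in Lemma~\ref{lem: Lambda tensor}(i) to split the tensor product; this avoids having to track which of Proposition~\ref{prop:hersubq} or Lemma~\ref{lem: Lambda tensor} is doing the work at each stage.
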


\begin{example}
Take $L=M=V(\varpi_1)_{(-q)^{-2}}$ and $N=V(\varpi_1)$ over $U'_q(A^{(1)}_{2})$ where $p^*=(-q)^3$ and $\tp=q^6$. Then we have
\begin{align*}
&c_{M,L}(z)=\dfrac{[2][-2]}{[0][6]},\  c_{N,L}(z)=\dfrac{[0][-4]}{[-2][4]}\qt{and hence} c_{M,L}(z) c_{N,L}(z)=\dfrac{[2][-4]}{[6][4]}.
\end{align*}
On the other hand, we have $M\hconv N=V(\varpi_2)_{(-q)^{-1}}$ and
$$ c_{M\tiny{\hconv} N,L}(z)=\dfrac{[2][-4]}{[0][4]}.$$
Thus we have
$$ \tLa(M,L)+\tLa(N,L)=(-1)+1=0, \qquad \tLa(M \hconv N,L)=-1 $$
and hence
$$  \La(M,L)+\La(N,L)-\La(M \hconv N,L)=2 \qt{and}  c_{M\tiny{\hconv} N,L}(z) \times (1-z) =  c_{M,L}(z) c_{N,L}(z).$$
\end{example}

\begin{definition}[{see\ Corollary \ref{cor:desym}}]
For simple modules $M$ and $N$ in $\uqm$, we define $\de(M,N)$ by
$$ \de(M,N)= \dfrac{1}{2}\bl\Lambda(M,N) + \Lambda(M^*,N)\br.$$
\end{definition}

Now we will prove that $ \de(M,N)$ is non-negative integer. In order to do that, we need some preparation.

\begin{lemma} \label{lem: computation a c}
For simple modules $M$ and $N$ in $\uqm$, we have
$$  c_{M,N}(z)c_{M^*,N}(z) \equiv d_{M,N}(z)d_{N,M}(z^{-1}) $$  and
$$\dfrac{c_{M,N}(z)}{c_{M,N}(\tp z)} \equiv \dfrac{d_{M,N}(z)d_{N,M}(z^{-1})}{d_{M^*,N}(z)d_{N,M^*}(z^{-1})}$$
up to a multiple of $\ko[z^{\pm1}]^\times$.
\end{lemma}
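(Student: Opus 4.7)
The plan is to reduce both identities to the duality relation in Proposition~\ref{prop: aMN}\,(ii) via the basic formula $c_{M,N}(z) \equiv d_{M,N}(z)/a_{M,N}(z) \mod \ko[z^{\pm 1}]^\times$ together with the transformation rules for $c$, $a$, $d$ under taking duals and spectral shifts (Proposition~\ref{prop:cstar} and~\eqref{eq:ainv}).

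For the first identity, I would apply Proposition~\ref{prop: aMN}\,(ii) to the pair $(M^*, N)$ rather than to $(M,N)$. Since $\rd(M^*) \simeq M$ for simple modules, the relation specializes to
$$a_{M^*, N}(z)\,a_{M,N}(z) \equiv \frac{d_{M^*, N}(z)}{d_{N, M}(z^{-1})} \mod \ko[z^{\pm 1}]^\times.$$
Substituting this into
$$c_{M,N}(z)\,c_{M^*, N}(z) \equiv \frac{d_{M,N}(z)\,d_{M^*, N}(z)}{a_{M,N}(z)\,a_{M^*, N}(z)}$$
the factor $d_{M^*,N}(z)$ cancels, yielding $d_{M,N}(z)\,d_{N,M}(z^{-1})$ as required.

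For the second identity, the idea is to apply the first identity a second time, now to the pair $(M^*, N)$, which gives
$$c_{M^*, N}(z)\,c_{M^{**}, N}(z) \equiv d_{M^*, N}(z)\,d_{N, M^*}(z^{-1}).$$
Since $M^{**} \simeq M_{\tp^{-1}}$ (using $\tp = p^{*2} = q^{2\langle c,\rho\rangle}$ and the formula $M^{**} \simeq M_{q^{-2(\updelta,\rho)}}$), Proposition~\ref{prop:cstar} gives $c_{M^{**},N}(z) = c_{M_{\tp^{-1}},N}(z) = c_{M,N}(\tp z)$. Hence the relation above becomes
$$c_{M^*, N}(z)\,c_{M,N}(\tp z) \equiv d_{M^*, N}(z)\,d_{N, M^*}(z^{-1}).$$
Dividing the first identity by this rewritten relation, the factor $c_{M^*, N}(z)$ cancels on the left, producing exactly $c_{M,N}(z)/c_{M,N}(\tp z)$ on the left and the desired ratio on the right.

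The only thing requiring care---and the main potential source of error---is keeping the duality bookkeeping straight: one must correctly identify $\rd(M^*)$ with $M$, recognise $M^{**}$ as $M_{\tp^{-1}}$ rather than $M_{\tp}$, and track the resulting shift $z \mapsto \tp z$ in $c_{M,N}$. Apart from this, both identities follow by purely formal manipulation once Propositions~\ref{prop:cstar} and~\ref{prop: aMN} are in hand, so I would not expect any serious obstacle.
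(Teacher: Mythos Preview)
Your proposal is correct and follows essentially the same approach as the paper: apply Proposition~\ref{prop: aMN}\,(ii) to the pair $(M^*,N)$ to obtain the first identity, then apply the first identity again to $(M^*,N)$ together with $c_{M^{**},N}(z)=c_{M,N}(\tp z)$ to deduce the second. The paper organizes the second step by writing $c_{M,N}(z)/c_{M,N}(\tp z)=\bigl(c_{M,N}(z)c_{M^*,N}(z)\bigr)\big/\bigl(c_{M^*,N}(z)c_{M^{**},N}(z)\bigr)$, which is just your division written as a single ratio.
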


\begin{proof}
By Proposition~\ref{prop: aMN} \eqref{MN2}, we have
$$    a_{M^*,N}(z)a_{M,N}(z) \equiv \dfrac{d_{M^*,N}(z)}{d_{N,M}(z^{-1})} \qquad {\rm mod} \  \ko[z^{\pm1}]^\times.$$
Recall that $c_{M,N}(z) =\dfrac{d_{M,N}(z)}{a_{M,N}(z)}$.  Then we have
\begin{align*}
c_{M,N}(z)c_{M^*,N}(z) &= \dfrac{d_{M,N}(z)}{a_{M,N}(z)} \times \dfrac{d_{M^*,N}(z)}{a_{M^*,N}(z)} \\
 &\equiv d_{M,N}(z) \times  d_{M^*,N}(z) \times \dfrac{d_{N,M}(z^{-1})}{d_{M^*,N}(z)} \\
 &\equiv d_{M,N}(z) \times  d_{N,M}(z^{-1})
\qquad {\rm mod} \  \ko[z^{\pm1}]^\times.
\end{align*}
Thus we have
\[
\dfrac{c_{M,N}(z)}{c_{M,N}(\tp z)} = \dfrac{c_{M,N}(z)c_{M^*,N}(z)}{c_{M^*,N}(z)c_{M^{**},N}(z)} \equiv
\dfrac{d_{M,N}(z) d_{N,M}(z^{-1})}{d_{M^*,N}(z) d_{N,M^*}(z^{-1})} \qquad {\rm mod} \  \ko[z^{\pm1}]^\times. \qedhere
\]
\end{proof}

\begin{proposition} \label{prop: de(M,N)}
For simple modules $M$ and $N$ in $\uqm$, we have
\eq
\de(M,N)=\zero_{z=1}\bl d_{M,N}(z)d_{N,M}(z^{-1})\br.\label{eq:deMN}
\eneq
In particular,
\[
 \de(M,N)\in\Z_{\ge0},
\] and
\begin{align}\label{eq:symmetry}
\de(M,N) = \de(N,M).
\end{align}
\end{proposition}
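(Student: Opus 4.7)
The plan is to derive the identity $\de(M,N)=\zero_{z=1}\bl d_{M,N}(z)d_{N,M}(z^{-1})\br$ by applying the group homomorphism $\Deg$ to the factorization already established in Lemma~\ref{lem: computation a c}, and then deduce non-negativity and symmetry as elementary consequences.

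First, I would start from Lemma~\ref{lem: computation a c}, which gives
\[
c_{M,N}(z)\,c_{M^*,N}(z)\;\equiv\;d_{M,N}(z)\,d_{N,M}(z^{-1})\qquad\mod\;\ko[z^{\pm1}]^\times.
\]
Apply the group homomorphism $\Deg\col\G\to\Z$. Elements of $\ko[z^{\pm1}]^\times$ are of the form $cz^m$ with $c\in\ko^\times$, $m\in\Z$, and such elements lie in the kernel of $\Deg$ (all exponents $\eta_a$ vanish). Hence
\[
\Deg(c_{M,N}(z))+\Deg(c_{M^*,N}(z))=\Deg\bl d_{M,N}(z)\,d_{N,M}(z^{-1})\br.
\]
The left side is by definition $\Lambda(M,N)+\Lambda(M^*,N)=2\de(M,N)$. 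For the right side, observe that $d_{M,N}(z)\,d_{N,M}(z^{-1})$ lies in $\cor(z)$, so Lemma~\ref{lem:properties of Deg}\eqref{tdeg:1} yields $\Deg\bl d_{M,N}(z)\,d_{N,M}(z^{-1})\br=2\,\zero_{z=1}\bl d_{M,N}(z)\,d_{N,M}(z^{-1})\br$. Dividing by $2$ produces the claimed formula~\eqref{eq:deMN}.

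For the non-negativity, both $d_{M,N}(z)$ and $d_{N,M}(z)$ are polynomials in $\ko[z]$, so $\zero_{z=1}d_{M,N}(z)\ge0$; moreover, for any polynomial $f(z)$ one has $\zero_{z=1}f(z^{-1})=\zero_{z=1}f(z)\ge0$ (the factors $(z^{-1}-x_\mu)$ contribute a zero at $z=1$ exactly when $x_\mu=1$, just as the factors $(z-x_\mu)$ do). Summing gives $\de(M,N)\in\Z_{\ge0}$. The symmetry then follows immediately, since
\[
\zero_{z=1}\bl d_{M,N}(z)d_{N,M}(z^{-1})\br
=\zero_{z=1}d_{M,N}(z)+\zero_{z=1}d_{N,M}(z)
\]
is manifestly symmetric in $M$ and $N$, proving $\de(M,N)=\de(N,M)$.

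There is no real obstacle here: the entire content has been arranged in advance, with Lemma~\ref{lem: computation a c} furnishing the key multiplicative identity and Lemma~\ref{lem:properties of Deg}\eqref{tdeg:1} converting $\Deg$ on rational functions into the order-of-zero at $z=1$. The only point requiring minor attention is verifying that $\Deg$ kills $\ko[z^{\pm1}]^\times$ (trivial from the definition of $\G$) and that $\zero_{z=1}$ is invariant under $z\mapsto z^{-1}$ on polynomials, both of which are straightforward.
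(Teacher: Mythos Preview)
Your proof is correct and follows essentially the same route as the paper: apply $\Deg$ to the identity of Lemma~\ref{lem: computation a c}, then invoke Lemma~\ref{lem:properties of Deg} to convert $\Deg$ on the rational function $d_{M,N}(z)d_{N,M}(z^{-1})$ into $2\,\zero_{z=1}$, and read off non-negativity and symmetry. Your explicit justification that $\Deg$ kills $\ko[z^{\pm1}]^\times$ and your spelled-out argument for non-negativity and symmetry are more detailed than the paper's terse ``the other assertions follow from \eqref{eq:deMN}'', but the underlying argument is identical.
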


\begin{proof}
By the preceding lemma,
\eqn
2\de(M,N)=\Deg\bl c_{M,N}(z)c_{M^*,N}(z)\br
&=&\Deg\bl d_{M,N}(z)d_{N,M}(z^{-1})\br\\
&=&2\zero_{z=1}\bl d_{M,N}(z)d_{N,M}(z^{-1})\br.
\eneqn
Here the last equality follows from
Lemma~\ref{lem:properties of Deg}\;\eqref{degpol}.
The other assertions follow from \eqref{eq:deMN}.
\end{proof}

\Cor\label{cor:commde}
Let $M$ and $N$ be simple modules in $\uqm$.
Assume that one of them is real.
Then $M$ and $N$ strongly commute if and only if
$\de(M,N)=0$.
\encor
\Proof
It follows from Proposition~\ref{prop: de(M,N)} and Theorem~\ref{Thm: basic properties}~
\eqref{it:comm}.
\QED

For $k \in \Z$ and a module $M$ in $\uqm$, we define
$$
\D^k(M) \seteq
\begin{cases}
(\cdots( M^* \underbrace{ )^* \cdots )^* }_{\text{$(-k)$-times}} & \text{ if } k <0, \\
 \underbrace{\rd  ( \cdots ( }_{\text{$k$-times}} \rd M  )\cdots)  & \text{ if } k \ge 0.
\end{cases}
$$

\begin{proposition} \label{prop: sym lam}
For simples $M$ and $N$ in $\uqm$, we have
$$\La(M,N)=\La(N^*,M)=\La(N,\rd M).$$
\end{proposition}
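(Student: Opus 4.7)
My plan is to split the statement into two equalities and treat them separately.

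The equality $\La(N^*,M) = \La(N,\rd M)$ is purely formal. Applying Lemma~\ref{lem:Lstar} to the pair $(N,\rd M)$ and using the standard double-dual identity $\rd\rd M \simeq M_\tp$ gives $\La(N,\rd M) = \La(\rd N, \rd\rd M) = \La(\rd N, M_\tp)$. A second application of Lemma~\ref{lem:Lstar}, shifting both arguments by $\tp^{-1}$, yields $\La(\rd N, M_\tp) = \La((\rd N)_{\tp^{-1}}, M)$. Finally, $(\rd N)_{\tp^{-1}} \simeq N^*$: both modules serve as the left dual of $N$, which follows from $N^{**} \simeq N_{\tp^{-1}}$ and the fact that $\rd$ and $^*$ are mutually inverse functors, so one has $\rd N \simeq (N^*)_\tp$.

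The substantive equality $\La(M,N) = \La(N^*,M)$ I plan to prove by an adjunction argument at the level of $R$-matrices. The $\ko[z^{\pm 1}]$-linear evaluation $N^*_z \tens N_z \to \one$ and coevaluation $\one \to N_z \tens N^*_z$ coming from the rigid monoidal structure of $\uqm$ induce a bijection of Hom-spaces
$$\Hom_{\uqpg\tens\ko[z^{\pm 1}]}(M \tens N_z,\;N_z \tens M) \simeq \Hom_{\uqpg \tens \ko[z^{\pm 1}]}(N^*_z \tens M,\;M \tens N^*_z).$$
After extending scalars to $\ko(z)$, both sides become one-dimensional by simplicity of the relevant $\ko(z) \tens \uqpg$-modules, spanned by the respective normalized $R$-matrices. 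By naturality, the bijection sends $\Runiv_{M,N_z}$ to a $\cz^\times$-multiple of $\Runiv_{N^*_z, M}$. Because the bijection preserves the property that a morphism specializes to a non-zero morphism at every $z=x\in\ko^\times$---which characterizes $\Rren$ up to $\cz^\times$---the same $\cz^\times$-proportionality holds at the renormalized level. Translating between the first-affinized convention $\Rren_{N^*_z, M}$ and the second-affinized convention $\Rren_{N^*, M_z}$ via the relation $\Rnorm_{A_z, B} = T_z \circ \Rnorm_{A, B_{1/z}}$ recorded in Subsection~\ref{sec:Rmat} amounts to the substitution $z \mapsto z^{-1}$, under which $\Deg$ is unchanged: indeed, $\vphi(az)$ and $\vphi(az^{-1})$ contribute the same value $\delta(a\in\tp^{\Z\le 0})-\delta(a\in\tp^{\Z>0})$ to $\Deg$ in their respective ambient groups, and $\cz^\times$-factors carry $\Deg=0$. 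Consequently $\La(M,N) = \Deg c_{M,N}(z) = \Deg c_{N^*,M}(z) = \La(N^*,M)$.

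The main obstacle is verifying that the adjunction acts on universal $R$-matrices by a $\cz^\times$-scalar. This reduces to the compatibility of $\Runiv$ with the antipode of $\uqpg$ through the evaluation and coevaluation morphisms, combined with a check that the resulting twist (essentially a product of $q$-power weight shifts arising from the pairing of highest-weight vectors) indeed lies in $\cz^\times$ rather than in a larger subgroup of $\G$ on which $\Deg$ could act non-trivially.
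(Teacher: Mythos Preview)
Your derivation of $\La(N^*,M)=\La(N,\rd M)$ from Lemma~\ref{lem:Lstar} alone is correct, if slightly roundabout; the paper instead deduces it in one line from the first equality together with Lemma~\ref{lem:Lstar}.

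Your approach to the main equality $\La(M,N)=\La(N^*,M)$ is genuinely different from the paper's. The paper never invokes adjunction. It sets $K(M,N)\seteq\La(M,N)-\La(N^*,M)$, uses the symmetry $\de(M,N)=\de(N,M)$ from Proposition~\ref{prop: de(M,N)} (proved via the explicit formula $\de(M,N)=\zero_{z=1}\bl d_{M,N}(z)d_{N,M}(z^{-1})\br$) to obtain $K(M,N)=K(N,M)$, combines this with Lemma~\ref{lem:Lstar} to get $K(M^*,N)=-K(M,N)$ and hence $K(\D^{2n}M,N)=K(M,N)$ for all $n\in\Z$, and then uses the asymptotics $\La(\D^{2n}M,N)\to\pm\Li(M,N)$ and $\La(N^*,\D^{2n}M)\to\mp\Li(N^*,M)$ as $n\to\pm\infty$ to force $K(M,N)=-K(M,N)$. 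No statement about how universal $R$-matrices behave under duals is needed.

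Your adjunction argument has a real gap, which you yourself identify. The $\cz$-linear bijection $\Phi$ does carry the generator $\Rren_{M,N_z}$ to a $\czt$-multiple of the generator $\Rren_{(N^*)_z,M}$; that step is sound. But $\La$ is defined through $c_{M,N}(z)$, the ratio of $\Rren$ to $\Runiv$. To extract $\La(N^*,M)$ you must also control $\Phi(\Runiv_{M,N_z})$ and compare it to the universal $R$-matrix on the other side, showing the discrepancy has $\Deg$ equal to zero. The antipode relation $(S\tens\id)(R)=R^{-1}$ for quasitriangular Hopf algebras indicates that this discrepancy is not a priori a scalar at all, and you have not carried out the computation. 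Separately, your $z\mapsto z^{-1}$ passage is not well-posed in the paper's framework: $\G$ sits inside $\cor((z))^\times$, the element $\vphi(az^{-1})$ does not belong to $\G$, and your comparison of $\Deg$-contributions is really between two distinct ambient groups. The paper's Remark~\ref{rem:dual} observes that precisely your adjunction argument works in the quiver Hecke algebra setting, where $\La$ is a graded degree and the bijection is graded; the replacement of grading by the homomorphism $\Deg$ on $\G$ is what makes the transfer to the quantum affine setting nontrivial, and that is the work you have left undone.
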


\begin{proof}
We shall prove $\La(M,N)=\La(N^*,M)$. The other equality follows from  Lemma~\ref{lem:Lstar}.

By~\eqref{eq:symmetry}, we have
\begin{align*}
& \La(M,N)+\La(M^*,N) = \La(N,M)+\La(N^*,M) \\
& \hspace{30ex} \iff  \La(M,N)-\La(N^*,M) = \La(N,M)-\La(M^*,N).
\end{align*}
Set $$K(M,N) \seteq \La(M,N)-\La(N^*,M).$$ Then we have $K(M,N)=K(N,M)$ and
$$ K(M^*,N)=\La(M^*,N)-\La(N^*,M^*)\underset{(\star)}{=}\La(M^*,N)-\La(N,M)= -K(N,M)
=-K(M,N),$$
where ($\star$) follows from Lemma~\ref{lem:Lstar}.
Hence we have
$$ K(M,N) = K(\D^{2n}(M),N) \qquad \text{ for any $n \in \Z$}.$$

Note that, for $n \gg 0$, we have
\begin{subequations} \label{eq: behaviour}
\begin{align}
& \La(\D^{2n}(M),N)=\La(M_{\tp^{n}},N)=\La(M,N_{\tp^{-n}})
=\bc \Li(M,N)&\text{if $n\gg0$,}\\
-\Li(M,N)&\text{if $n\ll0$,}\ec\\
& \La(N^*,\D^{2n}(M)) =\La(N^*,M_{\tp^{n}})
=\bc -\Li(N^*,M)&\text{if $n\gg0$,}\\
\Li(N^*,M)&\text{if $n\ll0$.}\ec
\end{align}
\end{subequations}
Thus, for $n \gg 0$, we have
$K(\D^{2n}(M),N) = - K(\D^{-2n}(M),N)$, which implies
$K(M,N)=-K(M,N)$.
Finally, we conclude that
\[ K(M,N) =0 .  \qedhere \]
\end{proof}

\Cor \label{cor:desym}
For any simple modules $M$ and $N$ in $\uqm$, we have
$$\de(M,N)=\dfrac{1}{2}\Bigl(\La(M,N)+\La(N,M)\Bigr).$$
\encor
\begin{corollary}\label{cor:real0}
For any real simple $M$ in $\uqm$, we have
$$ \La(M,M)=0.$$
\end{corollary}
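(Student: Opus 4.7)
The plan is to reduce $\La(M,M)=0$ to a direct application of Corollaries \ref{cor:desym} and \ref{cor:commde}. Specializing $N=M$ in Corollary \ref{cor:desym} gives
\[
\de(M,M)=\tfrac{1}{2}\bl\La(M,M)+\La(M,M)\br=\La(M,M),
\]
so it suffices to show $\de(M,M)=0$.

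Since $M$ is real simple, $M\tens M$ is simple by definition, which means that $M$ strongly commutes with itself. Because $M$ is real (and in particular, one of the pair $(M,M)$ is real), Corollary \ref{cor:commde} applies and yields $\de(M,M)=0$. Combining the two displayed equalities gives $\La(M,M)=0$.

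The argument involves no obstacle — it is a direct chaining of the two corollaries just established, and the main point is simply to recognize that the symmetrization formula $\de(M,N)=\tfrac{1}{2}(\La(M,N)+\La(N,M))$ collapses on the diagonal $N=M$.
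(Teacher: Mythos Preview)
Your proof is correct and is essentially identical to the paper's own argument: both combine Corollary~\ref{cor:commde} (yielding $\de(M,M)=0$ since $M$ is real) with Corollary~\ref{cor:desym} (yielding $2\de(M,M)=\La(M,M)+\La(M,M)$) to conclude $\La(M,M)=0$.
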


\begin{proof}
By Corollary~\ref{cor:commde}, Corollary~\ref{cor:desym} and the assumption that $M$ is real simple, we have $$0=2\de(M,M)=\La(M,M)+\La(M,M),$$
which implies our assertion.
\end{proof}

\Rem\label{rem:dual}
The formula in Proposition~\ref{prop: sym lam}
holds also for objects in the rigid monoidal category $\widetilde{\shc}_w$ (see
\cite{KKOP19A}).
Indeed, we have
$$\HOM (N^*\tens M_z,M_z\tens N^*)\simeq \HOM(M_z\tens N,N\tens M_z)$$
and hence their generators $\Rnorm_{N^*, M_z}$ and $\Rnorm_{M_z,N}$ have the same homogeneous degree.
\enrem

\begin{proposition} \label{prop: de and Lambda}
For simple modules $M$ and $N$ in $\uqm$,  we have the followings:
\begin{enumerate}[{\rm (i)}]
\item $\Lambda(M,N)= \sum_{k \in \Z} (-1)^{k+\delta(k<0)} \de(M,\D^{k}N)$,
\item $\Lambda^\infty(M,N)= \sum_{k \in \Z} (-1)^{k} \de(M,\D^{k}N)$,
\item
\berm $\zero_{z=1}c_{M,N}(z)=\displaystyle\sum_{k=0}^\infty(-1)^k\de(M,\D^kN)$. \er
\end{enumerate}
\end{proposition}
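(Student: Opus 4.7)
The plan is to work directly with the $\vphi$-expansion of the renormalizing coefficient. Writing $c_{M,N}(z) \equiv c\, z^{n_0} \prod_{a \in \ko^\times} \vphi(az)^{\eta_a}$ modulo $\cz^\times$ and setting $\alpha_j \seteq \eta_{\tp^j}$ for $j \in \Z$, one reads off from the definitions that $\tLa(M,N) = \zero_{z=1} c_{M,N}(z) = \sum_{j \le 0} \alpha_j$, $\Li(M,N) = \sum_{j \in \Z} \alpha_j$, and $\Lambda(M,N) = \sum_{j \le 0} \alpha_j - \sum_{j > 0} \alpha_j$; all three sums are finite since $\eta_a = 0$ for all but finitely many $a$. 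Thus each of (i), (ii), (iii) reduces to a combinatorial identity in the $\alpha_j$'s, and the content of the proposition is that $\de(M, \D^k N)$ assembles correctly into these linear combinations.

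The first step is to express $c_{M, \D^k N}(z)$ as a spectral shift of either $c_{M,N}(z)$ or $c_{M^*, N}(z)$. Combining Proposition~\ref{prop:cstar}, the identifications ${}^{**} N \simeq N_\tp$ and $N^{**} \simeq N_{\tp^{-1}}$, and the relation $c_{M, \rd N}(z) = c_{M^*, (\rd N)^*}(z) = c_{M^*, N}(z)$, a case-by-case analysis (even vs.\ odd $k$, and $k \ge 0$ vs.\ $k < 0$) collapses to the uniform shift formulas $c_{M, \D^{2m} N}(z) \equiv c_{M, N}(\tp^m z)$ and $c_{M, \D^{2m+1} N}(z) \equiv c_{M^*, N}(\tp^m z)$ (mod $\cz^\times$) for every $m \in \Z$. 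This is the main bookkeeping obstacle, since the definition of $\D^k$ branches at $k = 0$ and the various duals have to be tracked carefully; once these uniform formulas are established, everything else is telescoping.

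Next, applying $\zero_{z=1}$ to the congruence $c_{M,L}(z)\, c_{M^*,L}(z) \equiv d_{M,L}(z)\, d_{L, M}(z^{-1})$ from Lemma~\ref{lem: computation a c}, together with Proposition~\ref{prop: de(M,N)}, yields the basic identity $\de(M, L) = \tLa(M, L) + \tLa(M^*, L)$. Applied to $L = \D^k N$ with the shift formulas above, and using $c_{M^{**}, N}(z) = c_{M, N}(\tp z)$ to rewrite the contribution coming from $M^*$ in the odd case, a direct computation produces the telescoping identity $\de(M, \D^{2m} N) - \de(M, \D^{2m+1} N) = \alpha_{-m}$ for all $m \in \Z$; the perfect cancellation of the $c_{M^*, N}$ contributions isolates the single coefficient $\alpha_{-m}$.

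Summing this identity over $m \ge 0$ proves (iii), since the right-hand side collapses to $\sum_{j \le 0} \alpha_j = \tLa(M,N) = \zero_{z=1} c_{M,N}(z)$. Summing it over all $m \in \Z$ proves (ii), collapsing to $\sum_{j \in \Z} \alpha_j = \Li(M,N)$; here the $m \to -\infty$ tail of the partial sums $\sum_{j \le -m} \alpha_j$ contributes $\Li(M,N)$ while the $m \to +\infty$ tail vanishes. Finally, (i) follows by splitting the sum into its $k \ge 0$ and $k < 0$ parts: the former equals $\tLa(M,N) = \sum_{j \le 0}\alpha_j$ by (iii), while the latter, parametrized by $m \le -1$, evaluates via the same telescoping to $-\sum_{j > 0}\alpha_j$, giving $\sum_{j \le 0}\alpha_j - \sum_{j > 0}\alpha_j = \Lambda(M,N)$ as required.
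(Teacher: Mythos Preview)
Your proof is correct and follows essentially the same approach as the paper: both arguments establish the key telescoping identity $\eta_{\tp^{-m}} = \de(M,\D^{2m}N) - \de(M,\D^{2m+1}N)$ and then sum it appropriately. The only organizational difference is that the paper extracts $\eta_{\tp^{-k}}$ by computing the order of zero of the ratio $c_{M,N}(z)/c_{M,N}(\tp z)$ at $z=\tp^k$ and invoking the second formula of Lemma~\ref{lem: computation a c} directly, whereas you first set up the uniform shift formulas $c_{M,\D^{2m}N}(z)\equiv c_{M,N}(\tp^m z)$ and $c_{M,\D^{2m+1}N}(z)\equiv c_{M^*,N}(\tp^m z)$ and then use the identity $\de(M,L)=\tLa(M,L)+\tLa(M^*,L)$ coming from the first formula of the same lemma; the computations are equivalent.
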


\begin{proof}
Write $c_{M,N}(z) \equiv \prod \varphi(az)^{\eta_a} \mod \ \ko[z^{\pm1}]^\times$. Then we have
$$ \dfrac{c_{M,N}(z)}{c_{M,N}(\tp z)} \equiv \prod (1-az)^{\eta_a}.$$
and hence
\berm
\begin{align*}
\eta_{\tp^{-k}} & =\zero_{z= \tp^{k}}\left(\dfrac{c_{M,N}(z)}{c_{M,N}(\tp z)}\right)
=  \zero_{z= 1}\left(\dfrac{c_{M,N}(\tp^{k}z)}{c_{M,N}(\tp^{k+1} z)}\right) 
=  \zero_{z= 1}\left(\dfrac{c_{M,N_{\tp_k}}(z)}{c_{M,N_{\tp^{k}}}(\tp z)}\right) \\
& \underset{(*)}{=}
 \dfrac{d_{M,N_{\tp_k}}(z)d_{N_{\tp_k},M}(z^{-1})}
{d_{M^*,N_{\tp^{k}}}(z)d_{N_{\tp^{k}},M^*}(z^{-1})}
 \underset{(**)}{=}
\de(M,N_{\tp^{k}}) - \de(M^*,N_{\tp^{k}}) \\
&  = \de(M,\D^{2k} N) - \de(M,\D^{2k+1}N).
\end{align*}
Here $ (*)$ follows from Lemma~\ref{lem: computation a c} and 
$(**)$ from Proposition~\ref{prop: de(M,N)}.

Thus we have
\begin{align*}
\Lambda(M,N) & =\sum_{k \in \Z} (-1)^{\delta(k>0)} \eta_{\tp^k} 
 =\sum_{k \in \Z} (-1)^{\delta(k<0)} \eta_{\tp^{-k}} \\
&=\sum_{k \in \Z} (-1)^{\delta(k<0)} (\de(M,\D^{2k}N )- \de(M^*,\D^{2k+1}N)) \\
&= \sum_{k \in \Z} (-1)^{k+\delta(k<0)} \de(M,\D^{k}N ),
\end{align*}\er
which imply the first assertion.
Similarly, we have
\begin{align*}
\Lambda^\infty(M,N) & = \sum_{k \in \Z}     (\de(M,\D^{-2k}N )-\de(M,\D^{-2k+1}N)) = \sum_{k \in \Z} (-1)^{k}  \de(M,\D^{k}N ) 
\end{align*}
\berm Fimally. we have
\eqn
\zero_{z=1}c_{M,N}(z)=\sum_{k=0}^\infty\eta_{\tp^{-k}}
=\sum_{k=0}^\infty\bl\de(M,\D^{2k} N) - \de(M,\D^{2k+1}N)\br
=\sum _{k=0}^\infty(-1)^k\de(M,\D^{k} N).
\eneqn\qedhere \er
\end{proof}

The following corollary is a direct consequence of Proposition~\ref{prop: de and Lambda} and~\eqref{eq:symmetry}:

\begin{corollary} \label{cor:property Lainf} For simple modules $M$ and $N$ in $\uqm$, we have
\begin{enumerate}
\item $ \Lambda^{\infty}(M,N) = \Lambda^{\infty}(N,M).$
\item $ \Lambda^{\infty}(M,N) = -\Lambda^{\infty}(M^*,N)= -\Lambda^{\infty}(\rd M,N).$
\end{enumerate}
\end{corollary}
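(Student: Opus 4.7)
The plan is to derive both assertions directly from Proposition~\ref{prop: de and Lambda}(ii), combined with two symmetries of the invariant $\de$: the swap symmetry $\de(M,N)=\de(N,M)$ from \eqref{eq:symmetry}, and the \emph{shift invariance} $\de(\D^a M,\D^a N)=\de(M,N)$ for all $a\in\Z$. The latter is not stated explicitly but follows immediately from Lemma~\ref{lem:Lstar} (which gives $\La(M,N)=\La(M^*,N^*)=\La(\rd M,\rd N)$) together with the definition of $\de$, once one notes that $M^*=\D^{-1}M$ and $\rd M=\D^{1} M$.

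For part~(i), I would start from the expansion
\[
\Lambda^\infty(M,N)=\sum_{k\in\Z}(-1)^k\de(M,\D^k N),
\]
use shift invariance to rewrite $\de(M,\D^k N)=\de(\D^{-k}M,N)$, and reindex $k\mapsto -k$ to obtain $\sum_{k\in\Z}(-1)^k\de(\D^k M,N)$. Applying the swap symmetry~\eqref{eq:symmetry} to each summand converts this into $\sum_k(-1)^k\de(N,\D^k M)$, which is exactly $\Lambda^\infty(N,M)$ by Proposition~\ref{prop: de and Lambda}(ii) again.

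For part~(ii), since $M^*=\D^{-1}M$, shift invariance gives $\de(M^*,\D^k N)=\de(\D^{-1}M,\D^k N)=\de(M,\D^{k+1}N)$. Summing against $(-1)^k$ and reindexing $k\mapsto k-1$ produces the overall sign $-1$, so $\Lambda^\infty(M^*,N)=-\Lambda^\infty(M,N)$. The identical manoeuvre with $\rd M=\D M$ sends $\de(\rd M,\D^k N)$ to $\de(M,\D^{k-1}N)$ and again flips the sign of the alternating sum, giving $\Lambda^\infty(\rd M,N)=-\Lambda^\infty(M,N)$.

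I do not expect any serious obstacle; the one point worth checking is that the alternating sums are genuinely finite, which is implicit in Proposition~\ref{prop: de and Lambda} since the exponents $\eta_a$ appearing in $c_{M,N}(z)$ are nonzero for only finitely many $a$, so $\de(M,\D^k N)$ vanishes for $|k|\gg 0$. All reindexings are therefore legal, and the two identities in the corollary drop out mechanically.
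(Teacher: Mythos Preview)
Your proposal is correct and follows essentially the same route as the paper: the paper also invokes the shift invariance $\de(M,N)=\de(\D^k M,\D^k N)$, applies Proposition~\ref{prop: de and Lambda}(ii), reindexes the alternating sum, and uses \eqref{eq:symmetry} to obtain (i), then remarks that (ii) follows similarly. Your explicit justification of shift invariance via Lemma~\ref{lem:Lstar} and your remark on finiteness of the sums are welcome clarifications but do not change the argument.
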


\begin{proof}
Sine $\de(M,N)=\de(\D^kM,\D^kN)$, we have
\begin{align*}
\Lambda^\infty(M,N) &= \sum_{k \in \Z} (-1)^{k}  \de(M,\D^{k}N ) = \sum_{k \in \Z} (-1)^{k}  \de(\D^{k}M,N ) \\
&  =  \sum_{k \in \Z} (-1)^{k}  \de(N,\D^{k}M)  = \Lambda^\infty(N,M).
\end{align*}
Hence the first assertion follows. The second assertion follows similarly.
\end{proof}

\section{Further properties of the invariants} \label{sec: Further properties}
We start this section with the following proposition, which can be understood as a quantum affine analogue of
\cite[Proposition 3.2.8]{KKKO18}:

\begin{proposition} \label{prop:inequalities}
Let $N_1$, $N_2$ and $M$ be non-zero modules in $\Ca_\g$ and let
$f\col N_1 \to N_2$ be a morphism. We assume that $\Runiv_{N_k,M_z}$ is \rr
for $k=1,2$.
\begin{enumerate}[{\rm (i)}]
\item
If $f$ does not vanish, then $c_{N_1,M}(z)/c_{N_2,M}(z)\in\cor(z)$ and
$$\Lambda(N_1,M) - \Lambda(N_2,M)=
2\zero_{z=1}\left(\dfrac{c_{N_1,M}(z)}{c_{N_2,M}(z)}\right).$$
\item
If $\Lambda(M,N_1)=\Lambda(M,N_2)$, then the following diagram is commutative:
$$
\xymatrix@C=5em{
N_1 \otimes M \ar[r]^{\rmat{N_1,M}} \ar[d]_{f \tens M} &
M \otimes N_1 \ar[d]^{M \tens f} \\
N_2 \otimes M \ar[r]^{\rmat{N_2,M}}&
M \otimes N_2.
}
$$\label{it:commr}
\item If $\Lambda(N_1,M) > \Lambda(N_2,M)$, then the composition
$$ N_1 \otimes M \To[\rmat{N_1,M}] M \otimes N_1 \To[M \circ f] M \otimes N_2 $$
  vanishes.
 \item If $\Lambda(N_1,M) < \Lambda(N_2,M)$, then the composition
 $$   N_1 \otimes M \To[f \circ M] N_2 \otimes M \To[\rmat{N_2,M}] M \otimes N_2$$
  vanishes.
\end{enumerate}
\end{proposition}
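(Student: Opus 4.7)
The plan is to derive all four assertions from a single master identity on renormalized $R$-matrices, and then to specialize at $z = 1$ and read off the three cases according to the order of zero of $c_{N_1, M}(z)/c_{N_2, M}(z)$ there. First I would invoke naturality of $\Runiv$ with respect to the morphism $f \tens \id_{M_z} \colon N_1 \tens M_z \to N_2 \tens M_z$ to obtain
$$\Runiv_{N_2, M_z} \circ (f \tens M_z) = (M_z \tens f) \circ \Runiv_{N_1, M_z}$$
as $\cor((z)) \tens \uqpg$-linear maps. Multiplying through by $c_{N_1, M}(z)\,c_{N_2, M}(z)$ and using $\Rren_{N_k, M_z} = c_{N_k, M}(z) \Runiv_{N_k, M_z}$ converts this into
$$c_{N_1, M}(z) \cdot \Rren_{N_2, M_z} \circ (f \tens M_z) = c_{N_2, M}(z) \cdot (M_z \tens f) \circ \Rren_{N_1, M_z}, \qquad (\ast)$$
which will drive the entire proof.

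For (i) I would choose a $\cor$-basis $\{u_\alpha\}$ of $N_1 \tens M$ and a $\cor$-basis $\{v_\beta\}$ of $M \tens N_2$; under the canonical identification $N \tens M_z \simeq \cz \tens_\cor (N \tens M)$, these lift to $\cz$-bases. Writing both sides of $(\ast)$ as $\cz$-valued matrices $(g_{\alpha\beta}(z))$ and $(h_{\alpha\beta}(z))$ produces entrywise identities $c_{N_1, M}(z)\,g_{\alpha\beta}(z) = c_{N_2, M}(z)\,h_{\alpha\beta}(z)$. If $f \neq 0$, then $f \tens \id_{M_z} \neq 0$; since $\Rren_{N_2, M_z}$ becomes an isomorphism after extending scalars to $\cor((z))$ (and hence is injective as a map of free $\cz$-modules), the composite $\Rren_{N_2, M_z} \circ (f \tens M_z)$ is a nonzero $\cz$-linear map, so some $g_{\alpha_0\beta_0}(z) \neq 0$. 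Then
$$\frac{c_{N_1, M}(z)}{c_{N_2, M}(z)} = \frac{h_{\alpha_0\beta_0}(z)}{g_{\alpha_0\beta_0}(z)} \in \cor(z),$$
and the stated formula for $\Lambda(N_1, M) - \Lambda(N_2, M)$ follows by applying Lemma~\ref{lem:properties of Deg}\;\eqref{tdeg:1} to this rational ratio.

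With rationality in hand, set $r(z) := c_{N_1, M}(z)/c_{N_2, M}(z)$, let $a := \zero_{z=1}(r)$, and write $r(z) = (1-z)^a\,\tilde s(z)$ with $\tilde s(z) \in \cor(z)$ regular and non-vanishing at $z = 1$; by (i) one has $2a = \Lambda(N_1, M) - \Lambda(N_2, M)$. Dividing $(\ast)$ by $c_{N_2, M}(z)$ gives
$$(1-z)^a\,\tilde s(z) \cdot \Rren_{N_2, M_z} \circ (f \tens M_z) = (M_z \tens f) \circ \Rren_{N_1, M_z}.$$
For case (ii) ($a = 0$), both sides are regular at $z = 1$ and specialization yields $\tilde s(1) \cdot \rmat{N_2, M} \circ (f \tens M) = (M \tens f) \circ \rmat{N_1, M}$, i.e.\ the square commutes up to the nonzero scalar $\tilde s(1) \in \cor^\times$, which is absorbed into the $\cor^\times$-indeterminacy of the $\rmat{}$'s. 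For case (iii) ($a > 0$), the left side vanishes at $z = 1$, forcing $(M \tens f) \circ \rmat{N_1, M} = 0$. For case (iv) ($a < 0$), multiplying through by $(1-z)^{-a}$ moves the zero factor to the right; specializing at $z = 1$ kills the right side and forces $\tilde s(1)\,\rmat{N_2, M} \circ (f \tens M) = 0$, hence $\rmat{N_2, M} \circ (f \tens M) = 0$.

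The principal hurdle is Step (i): a priori the ratio $c_{N_1, M}(z)/c_{N_2, M}(z)$ lies only in the group $\G \subset \cor((z))^\times$, and its rationality — which is what makes $\zero_{z=1}$ meaningful and enables the clean specialization used for (ii)--(iv) — is not automatic. The matrix-entry trick above handles it, but the argument depends on $\Rren_{N_2, M_z}$ being injective on the free $\cz$-source appearing in $(\ast)$, which in turn follows only after extending scalars to $\cor((z))$ and using that $\Runiv_{N_2, M_z}$ is an isomorphism there.
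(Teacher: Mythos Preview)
Your argument is correct and, for parts (ii)--(iv), essentially identical to the paper's: both derive the master identity~$(\ast)$ from naturality of $\Runiv$, factor out the power of $(z-1)$ governing the ratio of renormalizing coefficients, and specialize at $z=1$ in each of the three cases. The paper clears denominators by writing $g(z)c_{N_1,M}(z)=h(z)(z-1)^t c_{N_2,M}(z)$ with $g,h\in\cor[z]$ non-vanishing at $z=1$, so that both sides of the square are honest $\cz$-linear maps; your version works with the rational unit $\tilde s(z)$ directly, which is fine since only the local behavior at $z=1$ matters.

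The one genuine difference is in (i). You prove rationality of $c_{N_1,M}(z)/c_{N_2,M}(z)$ from scratch via a matrix-entry trick: the identity $(\ast)$ forces the ratio to equal a quotient of Laurent-polynomial matrix entries, and you secure a non-zero entry by noting that $\Rren_{N_2,M_z}$ is injective (being an isomorphism after base change to $\cor((z))$). The paper instead invokes Proposition~\ref{prop:subqc}: since $\Im(f)$ is a non-zero subquotient of both $N_1$ and $N_2$, one has $c_{N_k,M}(z)/c_{\Im(f),M}(z)\in\cz$ for $k=1,2$, whence the ratio lies in $\cor(z)$. The paper's route is shorter and reuses an existing structural result; yours is more self-contained and shows explicitly how rationality is forced by the single equation~$(\ast)$, which is a nice observation in its own right.
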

Although we don't write, similar statements hold for $c_{M,N_k}(z)$
and $M\tens N_k$.
\begin{proof}
Without loss of generality,
we may assume that $f$ is non-zero.

\smallskip
\noi
(i)  Proposition~\ref{prop:subqc} implies that
$\dfrac{c_{N_1,M}(z)}{c_{N_2,M}(z)} \in \ko(z)$.
Hence we have by Lemma~\ref{lem:properties of Deg}
$$2\zero_{z=1}\left(\dfrac{c_{N_1,M}(z)}{c_{N_2,M}(z)}\right)=
\Deg\left(\dfrac{c_{N_1,M}(z)}{c_{N_2,M}(z)}\right) =
\Lambda(N_1,M) - \Lambda(N_2,M).$$

Set $t=\zero_{z=1}\bl{c_{N_1,M}(z)}/{c_{N_2,M}(z)}\br$.
Then we can write
$g(z)c_{N_1,M}(z)=h(z)(z-1)^t c_{N_2,M}(z)$
for some $t\in\Z$ and $g(z),h(z)\in\cor[z]$ which do not vanish at $z=1$.

\medskip
If $t\ge0$, then we have the following commutative diagram
\begin{align} \label{eq: com diam}
  \xymatrix@C=14em{
  N_1 \otimes M_z \ar[r]^{  g(z) \Rren_{N_1,M_z}} \ar[d]_{f \otimes M_z} &  M_z \otimes N_1 \ar[d]^{M_z \otimes f} \\
  N_2 \otimes M_z \ar[r]^{h(z) (z-1)^{t} \Rren_{N_2,M_z}}&  M_z \otimes N_2.
  }
\end{align}

(i) Since $t=0$, by specializing $z=1$ in the above diagram,
we obtain the commutativity of~\eqref{eq: com diam}.

(ii) Since $t > 0$, the homomorphism
$h(z)(z-1)^{t} \Rren_{N_2,M_z}$ vanishes at $z=1$.
Hence we have
$$(M \otimes f) \circ \rmat{N_1,M} = (z-1)^{t} \Rren_{N_2,M_z}\big|_{z=1} \circ (f\otimes M) =0,$$
as desired.

(iii) Since $t<0$, we have the following commutative diagram
\begin{align}
  \xymatrix@C=14em{
  N_1 \otimes M_z \ar[r]^{g(z)(z-1)^{-t} \Rren_{N_1,M_z}} \ar[d]_{f \otimes M_z} &  M_z \otimes N_1 \ar[d]^{M_z \otimes f} \\
  N_2 \otimes M_z \ar[r]^{h(z)\Rren_{N_2,M_z}}&  M_z \otimes N_2.
  }
\end{align}
Since $g(z)(z-1)^{-t} \Rren_{N_1,M_z}$ vanishes at $z=1$,
we obtain the desired result.
\end{proof}

From the above proposition,
we can show that the new invariants share
similar properties with the one for quiver Hecke algebras
studied in \cite[Section 3.2]{KKKO18}.
We will collect such properties.
Since the proofs are similar, we sometimes omit the proofs.

\begin{proposition} \label{pro:subquotient}
Let $L$, $M$ and $N$ be simple modules.
Then we have
\begin{equation} \label{eq:LMN<}
\begin{aligned}
&\de(S,L)\le\de(M,L)+\de(N,L)
\end{aligned}
\end{equation}
for any simple subquotient $S$ of $M\otimes N$.
Moreover, when $L$ is real, the following conditions are equivalent.
\begin{enumerate}
\item[{\rm (a)}] $L$ strongly commutes with $M$ and $N$.
\item[{\rm (b)}] Any simple subquotient $S$ of $M\otimes N$ commutes with $L$ and satisfies
$\La(S,L)=\La(M,L)+\La(N,L)$.
\item[{\rm (c)}] Any simple subquotient $S$ of $M\otimes N$ commutes with
$L$ and satisfies $\La(L,S)=\La(L,M)+\La(L,N)$.
\end{enumerate}
\end{proposition}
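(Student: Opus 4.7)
\smallskip

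The inequality is immediate from Proposition \ref{prop:hersubq}: applying parts (i) and (ii) with the simple subquotient $S$ of $M\otimes N$ yields
\begin{equation*}
\Lambda(S,L)\le \Lambda(M,L)+\Lambda(N,L)\qt{and}\Lambda(L,S)\le \Lambda(L,M)+\Lambda(L,N).
\end{equation*}
Adding these and halving, together with the identity $\de(X,L)=\tfrac12(\Lambda(X,L)+\Lambda(L,X))$ from Corollary \ref{cor:desym}, delivers $\de(S,L)\le\de(M,L)+\de(N,L)$.

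For the equivalence, the implications (a)$\Rightarrow$(b) and (a)$\Rightarrow$(c) are essentially forced by this inequality. Assuming (a), Corollary \ref{cor:commde} (using that $L$ is real) gives $\de(M,L)=\de(N,L)=0$; the inequality then forces $\de(S,L)=0$, i.e., $L$ and $S$ strongly commute. Since the sum of the two Proposition~\ref{prop:hersubq} inequalities is now $0\le0$, each must be an equality, which is precisely the remaining content of (b) and (c).

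The nontrivial direction is (b)$\Rightarrow$(a), and by a parallel argument (c)$\Rightarrow$(a). My plan is to exploit the equality $\Lambda(S,L)=\Lambda(M\tens N,L)$ (using Lemma \ref{lem: Lambda tensor} for simple $L$) and apply Proposition \ref{prop:inequalities}\,(ii) to the surjection $f\col K_2\twoheadrightarrow S$ from a submodule $K_2$ of $M\otimes N$ whose $\Lambda(-,L)$ also equals $\Lambda(M\otimes N,L)$ (this equality is itself forced by sandwiching $\Lambda(S,L)\le\Lambda(K_2,L)\le\Lambda(M\otimes N,L)$ via Proposition \ref{prop:subqcL}). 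This produces a commuting square in which $\rmat{S,L}$ is an isomorphism, because $L$ and $S$ strongly commute. Decomposing $\rmat{M\tens N,L}=(\rmat{M,L}\tens N)\circ(M\tens\rmat{N,L})$ via Proposition \ref{prop:rcomp}\,(ii) and combining this with the fact, obtained from exactness of $L\tens{-}$ and the simplicity of each $L\tens S_i$, that $L\otimes M\otimes N$ has the same length as $M\otimes N$, one then compares the length of $L\otimes M\otimes N$ computed via $(L\otimes M)\otimes N$ to deduce $\ell(L\otimes M)=\ell(L\otimes N)=1$, i.e., $\de(L,M)=\de(L,N)=0$.

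The main obstacle is precisely this last step: extracting strong commutativity of $L$ with each of $M$ and $N$ individually out of the hypothesis on \emph{all} simple subquotients of $M\otimes N$. The argument mirrors the analogous implication in the quiver Hecke algebra framework of \cite{KKKO18}, where Proposition~\ref{prop:inequalities} plays the role of the degree/commutation lemma, and where the factorization of the R-matrix of a tensor product, together with Theorem \ref{thm: KKKo15 main}, is the critical technical input.
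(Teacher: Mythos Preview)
The paper omits this proof, referring to the analogous argument in \cite[\S3.2]{KKKO18}. Your treatment of the inequality and of (a)$\Rightarrow$(b),(c) is correct and matches that framework.

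In your sketch of (b)$\Rightarrow$(a), however, the final ``length comparison'' does not do what you claim. Knowing that $L\tens M\tens N$ has the same length as $M\tens N$ and then writing $L\tens M\tens N=(L\tens M)\tens N$ gives
\[
\ell(M\tens N)=\sum_{A}\ell(A\tens N),
\]
summed over the composition factors $A$ of $L\tens M$; there is no lower bound on $\ell(A\tens N)$ available, so you cannot conclude $\ell(L\tens M)=1$ from this equality alone. The factorization of $\rmat{M\tens N,L}$ that you wrote down is the right ingredient, but you have not connected your commuting squares to it.

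The missing step is this. Take a Jordan--H\"older filtration $0=K_0\subset K_1\subset\cdots\subset K_\ell=M\tens N$ with $S_i=K_i/K_{i-1}$. Your sandwich argument shows $\La(K_i,L)=\La(S_i,L)=\La(M\tens N,L)$ for every $i$, so Proposition~\ref{prop:inequalities}\,\eqref{it:commr} applies to both $K_{i-1}\hookrightarrow K_i$ and $K_i\twoheadrightarrow S_i$, and the squares assemble into a morphism of short exact sequences
\[
\xymatrix@R=3ex{
0\ar[r]&K_{i-1}\tens L\ar[r]\ar[d]_{\rmat{K_{i-1},L}}&K_i\tens L\ar[r]\ar[d]_{\rmat{K_i,L}}&S_i\tens L\ar[r]\ar[d]_{\rmat{S_i,L}}^{\wr}&0\\
0\ar[r]&L\tens K_{i-1}\ar[r]&L\tens K_i\ar[r]&L\tens S_i\ar[r]&0.
}
\]
Since each $\rmat{S_i,L}$ is an isomorphism (by $\de(S_i,L)=0$ and Corollary~\ref{cor:commde}), the five lemma and induction on $i$ give that $\rmat{M\tens N,L}$ is an isomorphism. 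Now the factorization
\[
\rmat{M\tens N,L}=(\rmat{M,L}\tens N)\circ(M\tens\rmat{N,L})
\]
from Proposition~\ref{prop:rcomp} forces both $\rmat{M,L}\tens N$ and $M\tens\rmat{N,L}$ to be isomorphisms (they are maps between spaces of equal dimension), hence $\rmat{M,L}$ and $\rmat{N,L}$ are isomorphisms. Since $\Im(\rmat{M,L})$ is simple by Theorem~\ref{thm: KKKo15 main} (as $L$ is real), $M\tens L$ is simple; likewise $N\tens L$ is simple. This is (a). The implication (c)$\Rightarrow$(a) is the mirror argument with $\rmat{L,M\tens N}=(N\tens\rmat{L,M})\circ(\rmat{L,N}\tens M)$.
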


\begin{lemma} \label{lem:LMN1}
Let $L$, $M$ and  $N$ be simple modules in $\uqm$, and assume that $L$ is real.
\begin{enumerate}[{\rm (i)}]
\item If $L$ strongly commutes with $N$, then the diagram
$$\xymatrix@C=10ex
{(M\otimes N)\otimes L\ar[r]^{\rmat{M\tens N,L}}\ar[d]
&L\otimes(M\otimes N)\ar[d]\\
(M\hconv N)\otimes L\ar[r]^{\rmat{M\tiny{\hconv} N,L}}
&L\otimes(M\hconv N)}
$$
commutes and
$$\La(M\hconv N,L)=\La(M,L)+\La(N,L).$$
\item If $L$ strongly commutes with $M$, then the diagram
$$\xymatrix@C=10ex
{L\otimes (M\otimes N)\ar[r]^{\rmat{L,M\circ N}}\ar[d]
&(M\otimes N)\otimes L\ar[d]\\
L\otimes (M\hconv N)\ar[r]^{\rmat{L,M\tiny{\hconv} N}}
&(M \hconv N)\otimes L}
$$
commutes and
$$\La(L,M\hconv N)=\La(L,M)+\La(L,N).$$
\end{enumerate}
\end{lemma}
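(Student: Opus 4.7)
Here is my plan for part (i); part (ii) will be symmetric, using the factorization of Proposition~\ref{prop:rcomp}(iii) in place of (ii), the analogous variant of Proposition~\ref{prop:inequalities} with $M$ on the left of the $R$-matrix, and the fact that $\rmat{L,M}$ becomes an isomorphism under the strong commutation of $L$ with $M$.

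My plan is first to establish the numerical identity
\[
\Lambda(M \hconv N, L) = \Lambda(M, L) + \Lambda(N, L),
\]
after which the commutativity of the stated diagram will follow by applying Proposition~\ref{prop:inequalities}(ii) to the head surjection $p\colon M \otimes N \twoheadrightarrow M \hconv N$ (with $N_1 = M \otimes N$, $N_2 = M \hconv N$, and the ``$M$'' of Proposition~\ref{prop:inequalities} played by $L$ here). The equality $\Lambda(M \otimes N, L) = \Lambda(M, L) + \Lambda(N, L)$ is given by Lemma~\ref{lem: Lambda tensor}(i) (since $L$ is simple), and the upper bound $\Lambda(M \hconv N, L) \le \Lambda(M, L) + \Lambda(N, L)$ follows from Proposition~\ref{prop:hersubq}(i) applied to the simple subquotient $M \hconv N$; so my task reduces to forbidding strict inequality.

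To do so, I will suppose $\Lambda(M \hconv N, L) < \Lambda(M \otimes N, L)$ and derive a contradiction. Proposition~\ref{prop:inequalities}(iii) then forces the composite $(L \otimes p)\circ \rmat{M \otimes N, L}$ to vanish, and I will contradict this via the factorization
\[
\rmat{M \otimes N, L} \equiv (\rmat{M, L} \otimes N)\circ (M \otimes \rmat{N, L})
\]
up to a nonzero scalar (Proposition~\ref{prop:rcomp}(ii)). The strong commutation of $L$ with $N$ makes $\rmat{N, L}$ an isomorphism---it is a nonzero morphism between the isomorphic simples $N \otimes L$ and $L \otimes N$---hence $M \otimes \rmat{N, L}$ is also an isomorphism, and the vanishing reduces to that of $(L \otimes p)\circ (\rmat{M, L} \otimes N)\colon M \otimes L \otimes N \to L \otimes (M \hconv N)$. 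In the non-degenerate case $d_{M, L}(1) \ne 0$, testing this map on the dominant extremal vector $u_M \otimes u_L \otimes u_N$ produces $d_{M, L}(1) \cdot u_L \otimes p(u_M \otimes u_N)$, which is nonzero since $p(u_M \otimes u_N)$ is the dominant extremal weight vector of the simple head $M \hconv N$, contradicting the assumed vanishing.

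\emph{Main obstacle.} The hard case is $d_{M, L}(1) = 0$, where $u_M \otimes u_L$ lies in $\ker \rmat{M, L}$ and the dominant-vector witness above collapses. My remedy will be to pass to the affinization $L_z$: Proposition~\ref{prop:subqc} together with the simplicity of $M \hconv N$ yields a relation
\[
(L_z \otimes p)\circ \Rren_{M \otimes N, L_z} = h(z)\, \Rren_{M \hconv N, L_z}\circ (p \otimes L_z), \qquad h(z)\in \cor[z^{\pm 1}].
\]
The dominant-vector calculation at a generic $z$ with $d_{M, L}(z) \ne 0$ gives a nonzero LHS, forcing $h(z) \not\equiv 0$. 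The delicate remaining step---where I expect the main technical work---is to deduce $h(1) \ne 0$, by combining the defining non-vanishing of $\Rren_{M \hconv N, L_z}$ at every specialization with a careful analysis of the behaviour of $d_{M, L}(z)\Rnorm_{M, L_z}$ and its kernel as $z \to 1$.
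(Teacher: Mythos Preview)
Your overall strategy is exactly right: bound $\La(M\hconv N,L)$ above by Proposition~\ref{prop:hersubq}, assume strict inequality, invoke Proposition~\ref{prop:inequalities}(iii) to get vanishing of $(L\otimes p)\circ\rmat{M\otimes N,L}$, factor $\rmat{M\otimes N,L}$ via Proposition~\ref{prop:rcomp}(ii), absorb the isomorphism $M\otimes\rmat{N,L}$, and finish by showing that the remaining composite
\[
(L\otimes p)\circ(\rmat{M,L}\otimes N)\colon M\otimes L\otimes N\longrightarrow L\otimes(M\hconv N)
\]
is nonzero. The only defect is your method for this last nonvanishing.

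The case split on $d_{M,L}(1)$ and the affinization workaround are a detour, and the workaround is incomplete as written: the identity you set up does not by itself force $h(1)\ne0$, since the very vanishing you are assuming is precisely the statement that the left side specializes to zero at $z=1$. The clean argument avoids extremal vectors entirely and uses the same principle that proves Proposition~\ref{prop:rcomp}(i), namely Lemma~\ref{lem: middle}, but with the tensor factors in the mirror order: if $\varphi_1\colon L_0\to M_1\otimes M_2$ and $\varphi_2\colon M_2\otimes M_3\to L'$ are nonzero and $M_2$ is simple, then
\[
L_0\otimes M_3\xrightarrow{\ \varphi_1\otimes M_3\ }M_1\otimes M_2\otimes M_3\xrightarrow{\ M_1\otimes\varphi_2\ }M_1\otimes L'
\]
is nonzero. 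Apply this with $\varphi_1=\rmat{M,L}$ (so $M_1=L$, $M_2=M$) and $\varphi_2=p$ (so $M_3=N$, $L'=M\hconv N$); since $M$ is simple by hypothesis, the composite is nonzero irrespective of whether $d_{M,L}(1)=0$. For part~(ii) no mirroring is needed: after absorbing the isomorphism $\rmat{L,M}\otimes N$, the map to be shown nonzero is $(p\otimes L)\circ(M\otimes\rmat{L,N})$, which is exactly an instance of Proposition~\ref{prop:rcomp}(i) with $\varphi_1=\rmat{L,N}$, $\varphi_2=p$, and middle factor $N$ simple.
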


\Cor\label{cor:aadd}
Let $L$, $M$ $N$ be non-zero modules in $\uqm$.
Assume that $L$ is real.
Then we have
\bnum
\item If $L^*$ and $M$ strongly commute, then
$$\La(M\hconv N,L)=\La(M,L)+\La(N,L).$$
\item If $L$ and $N^*$ strongly commute, then
$$\La(L, M\hconv N)=\La(L,M)+\La(L,N).$$
\ee
\encor
\Proof
(i) We have
\eqn
&&\La(M\hconv N,L)\underset{(*)}{=}\La(L^*, M\hconv N)\underset{(**)}{=}\La(L^*,M)+\La(L^*,N)=\La(M,L)+\La(N,L),
\eneqn
where $(*)$ follows from Proposition~\ref{prop: sym lam}
and $(**)$ from Lemma~\ref{lem:LMN1}.
The proof of (ii) is similar.
\[
 \La(L, M\hconv N)=\La(M\hconv N,\rd L)=\La(M,\rd L)+\La(N,\rd L)=\La(L,M)+\La(L,N). \qedhere
\]
\QED

\begin{proposition} \label{prop:sochdNM}
Let $M$ and $N$ be non-zero modules in $\uqm$ and assume that
$M$ is real.
\begin{enumerate} [{\rm (i)}]
\item Assume that  $N$ has a simple socle,
$\Runiv_{N,M}$ is \rr and the diagram
$$\xymatrix@C=12ex
{\wb{\soc(N)\otimes M}\ar[r]^{\rmat{\soc(N),M}}\ar@{>->}[d]
&\wb{M\otimes\soc(N)}\ar@{>->}[d]\\
N\otimes M\ar[r]^{\rmat{N,M}}&M\otimes N
}
$$ commutes up to a non-zero constant multiple.
Then
$ M\sconv\soc(N)$ is isomorphic to
the socle of $M\otimes N$.
In particular, $M\otimes N$ has a simple socle.
\item  Assume that $N$ has a simple head,
$\Runiv_{M,N}$ is \rr and the diagram
$$
\xymatrix@C=10ex@R=6ex{
M \otimes N \ar[rr]^{\rmat{M,N}} \ar@{->>}[d] && N \otimes M  \ar@{->>}[d] \\
M \otimes \hd(N)\ar[rr]^{\rmat{M,\hd(N)}} && \hd(N) \otimes M
}
$$
commutes up to a non-zero constant multiple, then
$M\hconv \hd(N)$ is equal to the simple head of $M\otimes N$
\end{enumerate}
\end{proposition}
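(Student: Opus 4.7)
I focus on part (i); part (ii) will follow by applying the left duality $(-)^*$, since $(A\tens B)^*\simeq B^*\tens A^*$, the head of $N$ becomes the socle of $N^*$, and the head-commutative diagram for $M\tens N$ translates into the socle-commutative diagram for $N^*\tens M^*$ (with $M^*$ still real), allowing (ii) to be reduced to (i).

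The first step is to produce $M\sconv\soc(N)$ as a simple submodule of $M\tens N$. Applying Theorem~\ref{thm: KKKo15 main} to the real simple module $M$ and to the simple module $\soc(N)$, the module $M\tens\soc(N)$ has a simple socle, namely the image of $\rmat{\soc(N),M}$, which is exactly $M\sconv\soc(N)$. The commutative-up-to-constant hypothesis means that the composition $\soc(N)\tens M\xrightarrow{\rmat{\soc(N),M}} M\tens\soc(N)\hookrightarrow M\tens N$ coincides, up to a nonzero scalar, with $\soc(N)\tens M\hookrightarrow N\tens M\xrightarrow{\rmat{N,M}} M\tens N$. In particular, $M\sconv\soc(N)$ sits as a simple submodule of $M\tens N$, giving the inclusion $M\sconv\soc(N)\subset\soc(M\tens N)$.

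The second step is the identification $\soc(M\tens N)=M\sconv\soc(N)$. I will show that every simple submodule $L$ of $M\tens N$ is contained in $M\tens\soc(N)$; combined with $\soc(M\tens\soc(N))=M\sconv\soc(N)$, this forces $L=M\sconv\soc(N)$. Given $L\hookrightarrow M\tens N$, the adjunction yields a nonzero morphism $\phi\col M^*\tens L\to N$, and, since $\soc(N)$ is the unique simple submodule of $N$, the image of $\phi$ contains $\soc(N)$. The content of the statement is that, under the hypothesis, $\mathrm{Im}(\phi)=\soc(N)$, since this condition adjoints back to $L\hookrightarrow M\tens N$ factoring through $M\tens\soc(N)\hookrightarrow M\tens N$. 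Equivalently, one can argue with the short exact sequence $0\to M\tens\soc(N)\to M\tens N\to M\tens(N/\soc(N))\to 0$ and show that the projection of $L$ to $M\tens(N/\soc(N))$ vanishes.

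The hard part is extracting this factorization from the commutative-diagram hypothesis. The hypothesis rigidly pins down $\rmat{N,M}$ on $\soc(N)\tens M$, forcing it to land in $M\tens\soc(N)$, and the task is to transfer this rigidity to an arbitrary simple submodule $L$ of $M\tens N$. The plan is to combine: (a) Proposition~\ref{prop:inequalities} and the $R$-matrix estimates for the pair $(L,M)$, which relate morphisms out of $L\tens M$ with the invariant $\Lambda(L,M)$; (b) the fact that $M^*$ is real, so $M^*\tens L$ has a simple head and socle by Theorem~\ref{thm: KKKo15 main}; (c) the denominator relation $c_{N,M}(z)$ factoring through $c_{\soc(N),M}(z)$ forced by the commutativity of the diagram. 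Patching these together, any $\phi\col M^*\tens L\to N$ with image strictly larger than $\soc(N)$ would contradict the vanishing encoded by the hypothesis, completing the proof.
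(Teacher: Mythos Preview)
Your first step and the overall target are correct: you correctly embed $M\sconv\soc(N)$ into $M\tens N$ via the commutative-diagram hypothesis, and you correctly reduce to showing that every simple submodule $L$ of $M\tens N$ already lies in $M\tens\soc(N)$.  The adjunction observation that $L\hookrightarrow M\tens N$ corresponds to a nonzero $\phi\col M^*\tens L\to N$ whose image is the smallest $K\subset N$ with $L\subset M\tens K$ is also correct.

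The gap is in the last paragraph: you never actually prove that $\Im(\phi)=\soc(N)$.  The sketch invoking Proposition~\ref{prop:inequalities}, the reality of $M^*$, and ``the denominator relation $c_{N,M}(z)$ factoring through $c_{\soc(N),M}(z)$'' does not assemble into an argument.  The hypothesis tells you $\La(\soc(N),M)=\La(N,M)$, but it gives you no direct control over $\Im(\phi)$ as a submodule of $N$; knowing that $M^*\tens L$ has simple head only says $\Im(\phi)$ has simple head, not that it is small.  Without a further mechanism there is nothing to rule out $\Im(\phi)\supsetneq\soc(N)$.

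The paper's proof supplies exactly the missing mechanism, and it is different from your adjunction route.  Instead of passing to $M^*$, one tensors on the right by $M$ and looks at $\rmat{M\tens N,\,M}\col (M\tens N)\tens M\to M\tens(M\tens N)$.  Because $M$ is real, this equals $M\tens\rmat{N,M}$ up to a constant.  Restricting to the simple submodule $S\subset M\tens N$ (and using Proposition~\ref{prop:subqc} to see that $\Rren$ restricts), one obtains
\[
S\tens M\ \subset\ M\tens\bigl(\rmat{N,M}\bigr)^{-1}(S)\quad\text{inside }M\tens N\tens M.
\]
Now Lemma~\ref{lem: middle} (the ``middle'' lemma) produces a submodule $K\subset N$ with $S\subset M\tens K$ and $K\tens M\subset(\rmat{N,M})^{-1}(S)$.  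Since $K\neq0$ and $\soc(N)$ is simple, $\soc(N)\subset K$, hence $\rmat{N,M}(\soc(N)\tens M)\subset S$.  The commutative-diagram hypothesis ensures this image is nonzero (it is $\Im(\rmat{\soc(N),M})=M\sconv\soc(N)$), so $S=M\sconv\soc(N)$.  The use of Lemma~\ref{lem: middle} is the key idea your proposal lacks.

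Finally, your duality reduction for (ii) is plausible but needs more care than you give it (one must check that $(\rmat{M,N})^*$ matches $\rmat{N^*,M^*}$ and that the hypotheses transport correctly).  The paper simply remarks that (ii) is proved in the same way as (i), by the head-analogue of the argument above.
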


\begin{proof}
Let $S$ be an arbitrary simple submodule of $M\otimes N$.
Then we have the following commutative diagram:
$$\xymatrix@C=12ex
{\wb{S\otimes M_z}\ar[r]^{ f(z)(z-1)^m\Rren_{S\otimes M_z} }  \ar@{>->}[d]
&\wb{M_z\otimes S}\ar@{>->}[d]\\
M\otimes N\otimes M_z\ar[r]^{\Rren_{M\otimes N,M_z}}
&M_z\otimes M\otimes N.
}$$
for some $f(z)\in \cor(z)$ which is regular and do not vanish at $z=1$ and $m \in \Z_{\ge 0}$. By specializing at $z=1$,
we have a commutative diagram  (up to a constant multiple):
$$\xymatrix@C=12ex
{\wb{S\otimes M}\ar[r] \ar@{>->}[d]
&\wb{M\otimes S}\ar@{>->}[d]\\
M\otimes N\otimes M\ar[r]^{M\otimes\rmat{N,M}}&M\otimes M\otimes N.
}$$

Here, we use the fact that
$\rmat{M\otimes N, M} = (\rmat{M,M} \otimes N) \circ (M\otimes \rmat{N,M})$ and
$\rmat{M,M}$ is equal to ${\rm id}_{M\otimes M}$ up to a non-zero constant multiple, because $M$ is real.

It follows that  $S\otimes M\subset M\otimes (\rmat{N,M})^{-1}(S)$.
Hence there exists a submodule $K$ of $N$ such that
$S\subset M\otimes K$ and $K\otimes M\subset (\rmat{N,M})^{-1}(S)$
by Lemma \ref{lem: middle}.
Hence $K\not=0$ and $\soc(N)\subset K$ by the assumption.
Hence $\rmat{N,M}\bl\soc(N)\otimes M\br
\subset \rmat{N,M}\bl K\otimes M\br\subset S$.
Since $\rmat{N,M}\bl\soc(N)\otimes M\br$ is non-zero by the assumption, we have
$\rmat{N,M}\bl\soc(N)\otimes M\br=S$.
Thus we obtain the desired result for the first assertion.

The second assertion can be proved similarly.
\end{proof}

\begin{proposition} \label{prop:3simple}
Let $L$, $M$ and  $N$ be simple modules.
We assume that $L$ is real and
one of $M$ and $N$ is real.
\begin{enumerate}
\item[{\rm (i)}]
If $\La(L,M\hconv N)=\La(L,M)+\La(L,N)$,
then
$L\otimes M\otimes N$ has a simple head and
$N\otimes M\otimes L$ has a simple socle.
\item[{\rm (ii)}]
If $\La(M\hconv N,L)=\La(M,L)+\La(N,L)$,
then
$M\otimes N\otimes L$ has a simple head and
$L\otimes N\otimes M$ has a simple socle.
\item[{\rm (iii)}]
If $\de(L,M\hconv N)=\de(L,M)+\de(L,N)$,
then
$L\otimes M\otimes N$ and $M\otimes N\otimes L$  have  simple heads, and
$N\otimes M\otimes L$ and $L\otimes N\otimes M$ have  simple socles.
\end{enumerate}
\end{proposition}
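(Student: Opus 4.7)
The plan is to prove (i) and (ii) by direct application of Proposition~\ref{prop:sochdNM}, and then to deduce (iii) by showing that its hypothesis forces both hypotheses of (i) and (ii). In each of (i) and (ii) the head statement is proved directly, while the socle statement follows by passing to left duals, using that $(-)^{*}$ is a contravariant autoequivalence (so $\soc(X)$ is simple iff $\hd(X^{*})$ is simple) and Lemma~\ref{lem:Lstar} to translate the $\Lambda$-hypotheses.

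For (i), to show that $L \otimes M \otimes N = L \otimes (M \otimes N)$ has simple head I would apply Proposition~\ref{prop:sochdNM}(ii) with $L$ playing the role of the real module and $M \otimes N$ playing the role of the module with simple head $M \hconv N$. The commutativity of the required square reduces, via Proposition~\ref{prop:inequalities}(ii) applied to the canonical surjection $M \otimes N \twoheadrightarrow M \hconv N$, to the equality $\Lambda(L, M \otimes N) = \Lambda(L, M \hconv N)$; the left-hand side equals $\Lambda(L, M) + \Lambda(L, N)$ by Lemma~\ref{lem: Lambda tensor}(ii) (since $L$ is simple), which matches the right-hand side by the hypothesis of (i). For the simple socle of $N \otimes M \otimes L$, I would dualize: $(N \otimes M \otimes L)^{*} = L^{*} \otimes M^{*} \otimes N^{*}$, and one has $M^{*} \hconv N^{*} \cong (M \hconv N)^{*}$ (using $M^{*} \otimes N^{*} = (N \otimes M)^{*}$ and $\soc(N \otimes M) \cong M \hconv N$ from Theorem~\ref{thm: KKKo15 main}). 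Lemma~\ref{lem:Lstar} then translates the hypothesis verbatim to the triple $(L^{*}, M^{*}, N^{*})$, and applying the head statement just proved to this triple yields the simple socle of $N \otimes M \otimes L$.

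For (ii), the argument is structurally identical but on the opposite side. I would invoke the left--right mirror of Proposition~\ref{prop:sochdNM}(ii)---if $N$ is real, $M$ has simple head, and the analogous diagram for $\rmat{M,N}$ commutes with the projection $M \twoheadrightarrow \hd(M)$, then $M \otimes N$ has simple head $\hd(M) \hconv N$---which follows by the same argument as the original, using that $\rmat{N,N}$ is a scalar. Applying this to $(M \otimes N) \otimes L$ with $L$ real on the right, the commutativity requirement becomes $\Lambda(M \otimes N, L) = \Lambda(M \hconv N, L)$, which holds by Lemma~\ref{lem: Lambda tensor}(i) and the hypothesis of (ii), yielding simple head of $M \otimes N \otimes L$. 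The simple socle of $L \otimes N \otimes M$ is then obtained by dualizing and applying this head statement to $(L^{*}, M^{*}, N^{*})$ exactly as in the socle part of (i).

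For (iii), I would rewrite the hypothesis using $\de(L, X) = \tfrac{1}{2}\bigl(\Lambda(L, X) + \Lambda(X, L)\bigr)$ from Corollary~\ref{cor:desym} as
\[
\Lambda(L, M \hconv N) + \Lambda(M \hconv N, L) = \bigl(\Lambda(L, M) + \Lambda(L, N)\bigr) + \bigl(\Lambda(M, L) + \Lambda(N, L)\bigr).
\]
On the other hand, Proposition~\ref{prop:subqcL} applied to $M \hconv N$ as a quotient of $M \otimes N$ and as a submodule of $N \otimes M$, combined with Lemma~\ref{lem: Lambda tensor}, yields $\Lambda(L, M \hconv N) \le \Lambda(L, M) + \Lambda(L, N)$ and $\Lambda(M \hconv N, L) \le \Lambda(M, L) + \Lambda(N, L)$. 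Since the sum of the left-hand sides equals the sum of the right-hand sides, both inequalities must be equalities, so the hypotheses of both (i) and (ii) hold and (iii) follows by invoking them. The step requiring the most care is the left--right mirror of Proposition~\ref{prop:sochdNM}(ii) used in (ii); the rest is a formal assembly of the invariant calculus developed in Sections~\ref{sec: New invs} and~\ref{sec: Further properties}.
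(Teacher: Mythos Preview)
Your proposal is correct and follows essentially the approach the paper has in mind: the paper omits this proof, pointing to the quiver Hecke algebra analogue in \cite[Section 3.2]{KKKO18}, and your reconstruction via Proposition~\ref{prop:sochdNM}, Proposition~\ref{prop:inequalities} (and its unwritten mirror for $M\otimes N_k$), Lemma~\ref{lem: Lambda tensor}, Lemma~\ref{lem:Lstar} and Corollary~\ref{cor:desym} is exactly that argument transported to the quantum affine setting. The one point worth flagging is that for (ii) you need the right-hand analogue of Proposition~\ref{prop:sochdNM}(ii), which the paper does not state separately but which follows by the identical argument with $\rmat{N,N}$ a scalar (or, as you note, by dualizing); once that is granted, your deduction of (iii) from (i) and (ii) via the two subquotient inequalities summing to an equality is clean and correct.
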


\begin{proposition}\label{Prop: l2}
Let $M$ and $N$ be simple modules.
Assume that one of them is real and $\de(M,N)=1$.
Then we have an exact sequence
$$0\to  M\sconv N \to M\tens N\to M\hconv N\to 0.$$
In particular, $M\tens N$ has length $2$.
\end{proposition}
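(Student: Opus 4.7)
The strategy is to prove $\ell(M\tens N)=2$ via a determinantal computation on the renormalized $R$-matrices viewed as $\cz$-linear maps; the short exact sequence then follows from the known head/socle structure.

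First, the preliminaries. Since $\de(M,N)=1\neq 0$, Corollary~\ref{cor:commde} shows $M\tens N$ is not simple, and by Theorem~\ref{thm: KKKo15 main} it has a simple head $H\seteq M\hconv N$ and a simple socle $S\seteq M\sconv N$ with $H\not\simeq S$ (part~(iv) of that theorem: equality would force $M\tens N$ to be simple). Consequently, once we establish $\ell(M\tens N)=2$, the short exact sequence $0\to S\to M\tens N\to H\to 0$ is automatic, because $S=\soc(M\tens N)$ and $(M\tens N)/S$ is then a simple module with head $H$, hence isomorphic to $H$.

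For the main step, I view $\Rren_{M,N_z}$ and $\Rren_{N_z,M}$ as $\cz$-linear maps between the free $\cz$-modules $M\tens N_z\simeq\cz\tens_\cor(M\tens N)$ and $N_z\tens M\simeq\cz\tens_\cor(N\tens M)$, each of rank $r\seteq\dim_\cor(M\tens N)$, and represent them as matrices $A(z),B(z)\in M_r(\cz)$ relative to fixed bases. The identity~\eqref{eq:RR} reads $B(z)A(z)=c(z)P(z)\,I_r$ for some unit $c(z)\in\czt$ and $P(z)\seteq d_{M,N}(z)d_{N,M}(z^{-1})$. Taking determinants gives $(\det A)(\det B)=c(z)^rP(z)^r$, and therefore
$$\zero_{z=1}(\det A)+\zero_{z=1}(\det B)=r\cdot\zero_{z=1}P(z)=r\cdot\de(M,N)=r.$$
Each order is computed by analyzing cokernels: let $C_A$ denote the cokernel of $\Rren_{M,N_z}$, a finitely generated $\cz$-torsion module killed by $P(z)$. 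Writing $P(z)=(z-1)Q(z)$ with $Q(1)\neq 0$ (using $\de(M,N)=1$), the factor $Q(z)$ is a unit in the localization $\cz_{(z-1)}$, so the $(z-1)$-primary component of $C_A$ is annihilated by $(z-1)$ itself. By Smith normal form --- equivalently, all elementary divisors of $A(z)$ at $z=1$ equal $(z-1)^1$ --- we obtain
$$\zero_{z=1}(\det A)=\dim_\cor\bl C_A/(z-1)C_A\br=\dim_\cor\bl (N\tens M)/H\br=r-\dim H,$$
using right exactness of $-\tens_\cz\cor$ and $\Im(\rmat{M,N})=H$; symmetrically $\zero_{z=1}(\det B)=r-\dim S$.

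Combining gives $\dim H+\dim S=r=\dim(M\tens N)$. Since $H$ and $S$ are two non-isomorphic simple composition factors of $M\tens N$ whose dimensions already exhaust the total, they must be the only composition factors, forcing $\ell(M\tens N)=2$ and yielding the required short exact sequence. The main technical point is the identification $\zero_{z=1}(\det A)=\dim_\cor(C_A/(z-1)C_A)$, which relies on all elementary divisors at $z=1$ being $(z-1)^1$; this uses the hypothesis $\de(M,N)=1$ essentially, and the clean dimension count would break down for $\de(M,N)\ge 2$ where the $(z-1)$-primary part of $C_A$ could carry higher-order torsion.
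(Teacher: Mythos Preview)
Your argument is correct and follows essentially the same route as the cited reference \cite[Lemma~7.3]{KO18}: combine \eqref{eq:RR} with Proposition~\ref{prop: de(M,N)} to see that $\Rren_{N_z,M}\circ\Rren_{M,N_z}=(z-1)g(z)\,\id$ with $g(1)\neq0$, and then a Smith-normal-form computation over $\cz$ (equivalently, the direct verification that $\Ker\rmat{M,N}=\Im\rmat{N,M}$) yields $\dim H+\dim S=r$. One small wording fix: the elementary divisors of $A(z)$ at $z=1$ are at most $(z-1)^1$ (some are units, since $\rmat{M,N}\neq0$), not all equal to $(z-1)^1$; your computation of $\zero_{z=1}(\det A)$ is unaffected.
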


\begin{proof}
By Theorem~\ref{thm: KKKo15 main}, Proposition~\ref{prop: de(M,N)}
and \eqref{eq:RR}, we can apply the same argument in the proof
of \cite[Lemma 7.3]{KO18}.
\end{proof}

\begin{definition}
For simple modules $M$ and $M'$ in $\uqm$, we say that they are \emph{simply linked} if $ \de(M,M')=1$.
\end{definition}

\begin{proposition} \label{prop:real}
Let $X,Y,M$ and $N$ be simple modules in $\uqm$.
 Assume that there is an exact sequence
$$0 \to X \to M \tens N \to Y \to 0, $$
and $X \tens N$ and $Y \tens N$ are simple.
\bnum
\item \label{item: real i} If  $X \tens N \not\simeq Y \tens N$, then $N$ is a real simple module.
\item \label{item: real ii} If $M$ is real, then $N$ is a real simple module.
\end{enumerate}
\end{proposition}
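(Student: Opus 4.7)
The strategy is to tensor the given short exact sequence on the right with $N$ to obtain
$$0 \to X\otimes N \to M\otimes N\otimes N \to Y\otimes N \to 0,$$
which is exact; since both end terms are simple by hypothesis, $M\otimes N\otimes N$ has length exactly $2$. Because $\dim(M\otimes L)=\dim M\cdot\dim L$, tensoring by the non-zero simple module $M$ does not decrease length, so $N\otimes N$ has length at most~$2$. If $N$ is real we are done; otherwise $N\otimes N$ has length exactly $2$, fitting in a short exact sequence $0\to A\to N\otimes N\to B\to 0$ with $A,B$ simple, and the length constraint on $M\otimes N\otimes N$ forces both $M\otimes A$ and $M\otimes B$ to be simple.

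The crucial input is the identity~\eqref{eq:RR}, which for the pair $(N,N)$ reads
$$\rmat{N,N}\circ\rmat{N,N}=d_{N,N}(z)\,d_{N,N}(z^{-1})\big|_{z=1}\cdot\id_{N\otimes N}.$$
Since $N$ is not real, $\de(N,N)\ge 1$ by Corollary~\ref{cor:commde} and Proposition~\ref{prop: de(M,N)}, so the right-hand side vanishes. Thus $\rmat{N,N}$ is a non-zero nilpotent endomorphism of the length-$2$ module $N\otimes N$. A short case analysis -- if $N\otimes N$ is indecomposable, the unique proper non-zero submodule is $A$, so $\op{Im}\rmat{N,N}=\op{Ker}\rmat{N,N}=A$ and $B\simeq(N\otimes N)/\op{Ker}\rmat{N,N}\simeq \op{Im}\rmat{N,N}=A$; if $N\otimes N\simeq A\oplus B$ is semisimple with $A\not\simeq B$, then $\End(N\otimes N)\simeq\cor\oplus\cor$ contains no non-zero nilpotent -- forces $A\simeq B$. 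Consequently $M\otimes A\simeq M\otimes B$, and matching composition factors of $M\otimes N\otimes N$ across its two filtrations yields $X\otimes N\simeq Y\otimes N$.

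Part~(i) now follows, as this contradicts the hypothesis $X\otimes N\not\simeq Y\otimes N$. For part~(ii), reality of $M$ combined with $M\otimes N$ having length $2$ (hence not being simple) gives, via Theorem~\ref{thm: KKKo15 main}, that the simple socle $X=M\sconv N$ and the simple head $Y=M\hconv N$ are non-isomorphic. The remaining -- and main -- obstacle is to extract a contradiction from $X\not\simeq Y$ together with the derived isomorphism $X\otimes N\simeq Y\otimes N$. The plan is to exploit the rigidity of $\Ca_\g$: passing between $N$ and its duals $N^*$, $\rd N$ via the evaluation and coevaluation morphisms, together with simplicity of $M\otimes M$ (from $M$ real) and of $M\otimes A$ (derived above), to make the asymmetry $X\not\simeq Y$ visible inside the semisimple module $M\otimes N\otimes N\simeq(M\otimes A)^{\oplus 2}$. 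The hard step is carrying out this rigidity argument cleanly; once set up, the contradiction is immediate.
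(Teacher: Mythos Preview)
Your proof contains a genuine gap at the nilpotency step. You assert that $\rmat{N,N}^2=0$ via Corollary~\ref{cor:commde}, but that corollary requires one of the two modules to be real, and you are applying it to the pair $(N,N)$ precisely under the assumption that $N$ is \emph{not} real --- the citation is circular. Without nilpotency your case analysis for $A\simeq B$ collapses: if $\de(N,N)=0$ then by~\eqref{eq:RR} the endomorphism $\rmat{N,N}$ satisfies $\rmat{N,N}^2=c\cdot\id$ with $c\ne0$, and on an indecomposable length-$2$ module such an isomorphism may well be a scalar, yielding no relation between the composition factors. (In the quiver Hecke setting the grading rescues this step, but $\uqm$ has no grading.)

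For part~(ii) there is a second, independent gap: even granting $X\otimes N\simeq Y\otimes N$, your plan to contradict $X\not\simeq Y$ via ``rigidity'' is left entirely unspecified, and in general two non-isomorphic simples can have isomorphic tensor products with a third module. The argument the paper has in mind avoids both problems by exploiting the additivity of $\de(M,-)$ (Lemma~\ref{lem: Lambda tensor}) rather than trying to prove $A\simeq B$. Since $M$ is real and $M\otimes A$ is simple, Corollary~\ref{cor:commde} gives $\de(M,A)=0$; combined with Corollary~\ref{cor:real0} one gets
\[
\de(M,M\otimes A)=\de(M,M)+\de(M,A)=0,
\]
and likewise $\de(M,M\otimes B)=0$. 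On the other hand
\[
\de(M,Y\otimes N)=\de(M,Y)+\de(M,N)\ge\de(M,N)>0,
\]
the last inequality holding because $M$ is real and $M\otimes N$ is not simple. Since $Y\otimes N\in\{M\otimes A,\,M\otimes B\}$, this is a contradiction. The missing idea is thus to compare the two descriptions of the composition factors of $M\otimes N\otimes N$ via the invariant $\de(M,-)$, not via an unjustified identification $A\simeq B$.
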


\begin{lemma}\label{lem:MN}
Let $\{M_i\}_{1\le i\le n}$ and $\{N_i\}_{1\le i\le n}$ be a pair of
commuting families of real simple modules in $\uqm$.
We assume that
\begin{enumerate}[{\rm (a)}]
 \item $\{M_i\hconv N_i\}_{1\le i\le n}$ is a commuting family of real simple modules,
\item $M_i\hconv N_i$ commutes with $N_{j}$ for any $1\le i,j\le n$.
\end{enumerate}
Then we have
$$\left(\sotimes_{1\le i\le n}M_i\right)\hconv \left(\sotimes_{1\le j\le n}N_j\right)
\simeq \sotimes_{1\le i\le n}\left(M_i\hconv N_i \right).
$$
\end{lemma}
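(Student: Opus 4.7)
The plan is to argue by induction on $n$; the case $n=1$ is trivial. For the reduction of general $n$ to $n=2$, set $M'\seteq\sotimes_{i<n}M_i$, $N'\seteq\sotimes_{i<n}N_i$, and $P'\seteq\sotimes_{i<n}P_i$ with $P_i\seteq M_i\hconv N_i$. Since every $P_i$ strongly commutes with every $N_j$ by hypothesis (b) and the $N_j$'s pairwise strongly commute, both $P'\tens N_n$ and $P_n\tens N'$ are simple; the induction hypothesis also gives $M'\hconv N'=P'$. So the proof is reduced to the $n=2$ case applied to $(M',M_n),(N',N_n),(P',P_n)$, which satisfies the same hypotheses.

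For the base case $n=2$, let $f_i\colon M_i\tens N_i\twoheadrightarrow P_i$ be the canonical surjection. The tensor $M_1\tens M_2\tens N_1\tens N_2$ has a simple head by Theorem~\ref{thm: KKKo15 main}, since $M_1\tens M_2$ and $N_1\tens N_2$ are real simple. I would define the natural morphism
$$\Psi\seteq(f_1\tens f_2)\circ(M_1\tens \rmat{M_2,N_1}\tens N_2)\colon M_1\tens M_2\tens N_1\tens N_2\To P_1\tens P_2.$$
Since $P_1\tens P_2$ is simple by (a), it suffices to show $\Psi\ne0$; the simple head must then be $P_1\tens P_2$.

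For the non-vanishing, Lemma~\ref{lem: Lambda tensor}(i) and Lemma~\ref{lem:LMN1}(i) (the latter using that $N_1$ strongly commutes with $N_2$) give $\La(M_2\tens N_2,N_1)=\La(M_2,N_1)+\La(N_2,N_1)=\La(P_2,N_1)$. By Proposition~\ref{prop:inequalities}(ii) applied to $f_2$ and $N_1$, the diagram
\[\xymatrix@C=8ex@R=3.5ex{M_2\tens N_2\tens N_1\ar[r]^{\rmat{M_2\tens N_2,N_1}}\ar@{->>}[d]_{f_2\tens N_1}&N_1\tens M_2\tens N_2\ar[d]^{N_1\tens f_2}\\ P_2\tens N_1\ar[r]^{\rmat{P_2,N_1}}&N_1\tens P_2}\]
commutes. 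Hypothesis (b) ensures $\rmat{P_2,N_1}$ is an isomorphism, so the left-bottom path is surjective, hence so is the top-right one. Factoring $\rmat{M_2\tens N_2,N_1}=(\rmat{M_2,N_1}\tens N_2)\circ(M_2\tens\rmat{N_2,N_1})$ by Proposition~\ref{prop:rcomp}(ii) and noting that $\rmat{N_2,N_1}$ is an isomorphism (since $N_1,N_2$ strongly commute), it follows that $(N_1\tens f_2)\circ(\rmat{M_2,N_1}\tens N_2)\colon M_2\tens N_1\tens N_2\to N_1\tens P_2$ is surjective. Tensoring on the left with $M_1$ and composing with $f_1\tens P_2\colon M_1\tens N_1\tens P_2\twoheadrightarrow P_1\tens P_2$ exhibits $\Psi$ as a composition of surjective maps, so $\Psi\ne0$.

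The main obstacle is the non-vanishing of $\Psi$ in the base case: hypothesis (b) is precisely what makes $\rmat{P_2,N_1}$ an isomorphism, and the commutativity of $N_1$ with $N_2$ is precisely what supplies the additivity $\La(M_2\tens N_2,N_1)=\La(P_2,N_1)$ feeding Proposition~\ref{prop:inequalities}(ii). Once the base case is in hand, the induction becomes straightforward since all three families $\{M_i\},\{N_i\},\{P_i\}$ are commuting families of real simples, and hypothesis (b) is visibly stable under grouping.
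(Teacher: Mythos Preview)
Your argument is correct. The paper does not give a proof of this lemma: it is one of the results in Section~4 whose proof is omitted as being parallel to the quiver Hecke algebra version in \cite[Section 3.2]{KKKO18}. Your induction on $n$ with reduction to $n=2$, and the construction of the surjection $\Psi$ via $(f_1\tens f_2)\circ(M_1\tens\rmat{M_2,N_1}\tens N_2)$, is exactly the expected transposition of that argument to the quantum affine setting, using the new invariant $\La$ in place of the graded degree. The key point---that hypothesis~(b) forces $\rmat{P_2,N_1}$ to be an isomorphism, while the commutativity of $N_1$ with $N_2$ supplies the additivity $\La(M_2\tens N_2,N_1)=\La(P_2,N_1)$ needed to invoke Proposition~\ref{prop:inequalities}\,\eqref{it:commr}---is identified correctly. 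One small remark: the compositions in Proposition~\ref{prop:rcomp} and the square in Proposition~\ref{prop:inequalities}\,\eqref{it:commr} commute only up to a non-zero constant, but this is harmless for your surjectivity argument.
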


\begin{theorem}\label{th:leclerc}
Let $M$ and $N$ be simple modules.
We assume that $M$ is real.
Then we have
the equalities in the Grothendieck group $K(\uqm)${\rm:}
\begin{enumerate}[{\rm (i)}]
\item
$[M\tens N]=[M\hconv N]+\sum_{k}[S_k]$\\[.5ex]
with simple modules $S_k$ such that $\La(M,S_k)<\La(M,M\hconv N)=\La(M,N)$,
\item
$[M\tens N]=[ M\sconv N]+\sum_{k}[S_k]$\\[.5ex]
with simple modules $S_k$ such that $\La(S_k,M)<\La( M\sconv N,M)=\La(N,M)$,
\item
$[N\tens M]=[N\hconv M]+\sum_{k}[S_k]$\\[.5ex]
with simple modules $S_k$ such that $\La(S_k,M)<\La(N\hconv M,M)=\La(N,M)$,
\item
$[N\tens M]=[ N\sconv M]+\sum_{k}[S_k]$\\[.5ex]
with simple modules $S_k$ such that $\La(M,S_k)<\La(M, N \sconv M )=\La(M,N)$.
\end{enumerate}
In particular, $M\hconv N$ as well as $ M \sconv N $ appears only once
in the Jordan-H\"older series of $M\tens N$ in $\uqm$.
\end{theorem}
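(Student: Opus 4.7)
The plan is to prove (i); statements (ii)--(iv) will follow by symmetric arguments, using the isomorphisms $M\hconv N\simeq N\sconv M$ and $N\hconv M\simeq M\sconv N$ from Theorem~\ref{thm: KKKo15 main} together with the symmetries $\La(A,B)=\La(B^*,A)=\La(B,\rd A)$ of Proposition~\ref{prop: sym lam}. Each of (ii)--(iv) can thereby be translated into a statement of the form of (i) with dualized or swapped arguments.

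First I would establish the equality $\La(M,M\hconv N)=\La(M,N)$. Since $M$ is real, Corollary~\ref{cor:real0} gives $\La(M,M)=0$, so $M$ strongly commutes with itself. Lemma~\ref{lem:LMN1}(ii) applied with $L=M$ then yields $\La(M,M\hconv N)=\La(M,M)+\La(M,N)=\La(M,N)$. At the same time, Proposition~\ref{prop:hersubq}(ii) with $L=M$ supplies the a priori bound $\La(M,S)\le\La(M,M)+\La(M,N)=\La(M,N)$ for every simple subquotient $S$ of $M\tens N$.

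The core task is to upgrade this inequality to strict inequality whenever $S\not\simeq M\hconv N$ and to show that $M\hconv N$ appears in the Jordan--H\"older series of $M\tens N$ with multiplicity exactly one. Let $K$ be the kernel of the canonical surjection $M\tens N\epito M\hconv N$, so that $[M\tens N]=[M\hconv N]+[K]$ in $K(\uqm)$; the assertion reduces to showing that no composition factor of $K$ is isomorphic to $M\hconv N$ and that every composition factor $S$ of $K$ satisfies $\La(M,S)<\La(M,N)$. I would argue by contradiction: assume $T$ is a simple subquotient of $K$ with $\La(M,T)=\La(M,N)$, and pick a subquotient $V\subseteq M\tens N$ that is minimal among those having $T$ as a simple quotient while attaining the maximal value $\La(M,-)=\La(M,N)$. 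Applying Proposition~\ref{prop:inequalities}(ii) to the surjection $V\epito T$, after matching the relevant $\La(-,M)$-values using the exact additivity of $\La^\infty$ on subquotients from Proposition~\ref{prop:hersubq}(ii) together with~\eqref{eq:LtL}, will produce a commutative diagram relating $\rmat{V,M}$ and $\rmat{T,M}$.

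The crucial leverage comes from the reality of $M$: Theorem~\ref{thm: KKKo15 main}(ii) makes $M\tens M$ simple, hence $\rmat{M,M}$ is a non-zero scalar multiple of the identity, and then relation~\eqref{eq:tensr} identifies $\rmat{M,M\tens N}$ with $M\tens\rmat{M,N}$ up to a non-zero scalar. Chasing the resulting commutative diagrams and invoking the one-dimensionality $\Hom(M\tens N,N\tens M)=\cor\,\rmat{M,N}$ from Theorem~\ref{thm: KKKo15 main}(i) should force $T$ to coincide with the image of $\rmat{M,N}$, namely $T\simeq M\hconv N$, and a parallel application of the same uniqueness principle at the level of subquotient $R$-matrices will preclude a second independent occurrence of $M\hconv N$ as a subquotient of $K$. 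The main obstacle will be this final step: the Jordan--H\"older bookkeeping needed to isolate the minimal offending subquotient $V$, and the delicate comparison of $R$-matrices at the subquotient level, demand a careful combination of Proposition~\ref{prop:inequalities} with the structural results of Theorem~\ref{thm: KKKo15 main} and Proposition~\ref{pro:subquotient}.
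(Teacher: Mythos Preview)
Your setup is sound: the equality $\La(M,M\hconv N)=\La(M,N)$ via Lemma~\ref{lem:LMN1} and the a~priori bound $\La(M,S)\le\La(M,N)$ from Proposition~\ref{prop:hersubq} are exactly right, and you correctly identify that reality of $M$ makes $\rmat{M,M}$ a scalar so that $\rmat{M,M\tens N}$ coincides (up to a nonzero constant) with $M\tens\rmat{M,N}$. This is precisely the mechanism the paper uses.

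Where your argument goes astray is in the core step. You introduce a simple subquotient $T$ of $K=\ker(M\tens N\to M\hconv N)$, a minimal subquotient $V$ with $T$ as quotient, and propose to ``chase diagrams'' to force $T\simeq M\hconv N$. This is both unnecessary and unclear: you never explain how the one-dimensionality of $\Hom(M\tens N,N\tens M)$ would pin down $T$, and the minimal-$V$ bookkeeping plays no role. The paper's proof (written for part~(iii)) is far more direct. It works with the full kernel $K_1$ at once: the renormalized map $\Rren_{N\tens M,M_z}$ sends $K_1\tens M_z$ into $M_z\tens K_1$, and its specialization at $z=1$ is, up to a nonzero scalar, $\rmat{N,M}\tens M$. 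Since $\rmat{N,M}$ kills $K_1$ by definition, the restriction of $\Rren_{N\tens M,M_z}$ to $K_1\tens M_z$ lands in $(z-1)(M_z\tens K_1)$. Dividing by $(z-1)$ gives a well-defined renormalization, whence $\La(K_1,M)<\La(N\tens M,M)=\La(N,M)$; Proposition~\ref{prop:subqcL} then propagates the strict inequality to every simple subquotient of $K_1$. Equivalently, in your language: if $\La(M,K)=\La(M,M\tens N)$ then Proposition~\ref{prop:inequalities}(ii) applied to the inclusion $K\hookrightarrow M\tens N$ would make $\rmat{M,K}$ the restriction of $\rmat{M,M\tens N}=c\,(M\tens\rmat{M,N})$, which vanishes on $M\tens K$; this contradicts $\rmat{M,K}\ne0$. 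No contradiction hypothesis on an individual $T$ is needed, and the multiplicity-one statement is then automatic, since no subquotient of $K$ can satisfy $\La(M,S)=\La(M,M\hconv N)$.

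One minor point on the reductions: (i) and (iv), and likewise (ii) and (iii), are equivalent via $[M\tens N]=[N\tens M]$ together with $M\hconv N\simeq N\sconv M$, but passing between the pair $\{(\mathrm{i}),(\mathrm{iv})\}$ and the pair $\{(\mathrm{ii}),(\mathrm{iii})\}$ is not a formal duality; one genuinely needs a second, symmetric argument (tensoring $M_z$ on the other side), as the paper indicates.
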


\begin{proof}
We shall prove only (iii).
The other statements are proved similarly.
First remark that
$\La(N\hconv M,M)=\La(N,M)+\La(M,M)=\La(N,M)$ by
Lemma~\ref{lem:LMN1} and
Corollary~\ref{cor:real0}.

Let
$$N\otimes M=K_0\supset K_1\supset \cdots\supset K_\ell\supset K_{\ell+1}=0$$
be a Jordan-H\"older series of $N\otimes M$. Then we have $K_0/K_1\simeq N\hconv M$.
Let us consider the
renormalized R-matrix 
$\Rren_{N \otimes M,M_z}=(\Rren_{N,M_z}\otimes M)\circ(N\otimes \Rren_{M,M_z})$
$$\xymatrix@C=10ex{
N\otimes M \otimes M_z \ar[r]^{N \otimes \Rren_{M,M_z}}
&N\otimes M_z\otimes M\ar[r]^{\Rren_{N,M_z} \otimes M} & M_z\otimes N\otimes M.}
$$

Then $\Rren_{N\otimes M,M_z}$ sends $K_k\otimes M_z$ to $M_z\otimes K_k$ for any $k$.
By evaluating the above diagram at $z=1$, we obtain
$$\xymatrix@C=10ex{
N\otimes M\otimes M\ar[r]^{\rmat{N,M} \otimes M}
&M\otimes N\otimes M\\
\wb{K_1\otimes M}\ar[r]\ar@{^{(}->}[u] &\wb{M \otimes K_1.}\ar@{^{(}->}[u]}
$$

Since $\Im(\rmat{N,M}\colon N\otimes M\to M\otimes N )\simeq (N\otimes M)/K_1$,
we have
$\rmat{M,N}(K_1)=0$.
Hence, $\Rren_{N\otimes M,M_z}$ sends $K_1 \otimes M_z$ to
$(M_z \otimes K_1)\cap (z-1)\bl M_z \otimes (N\otimes M)\br
=(z-1)(M_z\otimes K_1)$.
Thus $(z-1)^{-1}\Rren_{N\otimes M,M_z}\vert_{K_1 \otimes M_z}$ is well defined.
Hence, we have
$\La(K_1,M)\le \La(N\otimes M,M)-1=\La(N,M)-1$.
Hence we have
$\La(K_k/K_{k+1},M)\le\La(K_1,M)<\La(N,M)$ for $k\ge1$
by Proposition~\ref{prop:subqcL}.
\end{proof}

\begin{corollary}\label{cor:compest}
Let $M$ and $N$ be simple modules in $\uqm$.
We assume that one of them is real and   $M \otimes N$ is not simple.
We write
$$[M\tens N]=[M\hconv N]+[ M\sconv N ]+\sum_{k}[S_k]$$
with simple modules $S_k$ in the Grothendieck ring $K(\uqm)$.
Then we have
\begin{enumerate}[{\rm(i)}]
\item
If $M$ is real, then we have
$\La(M,  M\sconv N)<\La(M,N)$, $\La(M\hconv N,M)<\La(N,M)$ and
$\La(M, S_k)<\La(M,N)$, $\La(S_k,M)<\La(N,M)$.
\item
If $N$ is real, then we have
$\La(N, M\hconv N)<\La(N,M)$, $\La( M \sconv N, N)<\La( M,N)$ and
$\La(N, S_k)<\La(N,M)$, $\La(S_k,N)<\La(M,N)$.
\end{enumerate}
\end{corollary}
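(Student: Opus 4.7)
The plan is to derive Corollary \ref{cor:compest} as an essentially immediate consequence of Theorem \ref{th:leclerc}, combined with the identifications between heads and socles of $M \otimes N$ and $N \otimes M$ provided by Theorem \ref{thm: KKKo15 main}\;(iii). Recall that for real $M$ (or real $N$), the image of $\rmat{M,N}$ is simultaneously the head of $M \otimes N$ and the socle of $N \otimes M$, so that $M \hconv N \simeq N \sconv M$ and, symmetrically, $M \sconv N \simeq N \hconv M$.

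First, I would observe that the hypothesis that $M \otimes N$ is not simple, together with Theorem \ref{thm: KKKo15 main}\;(iv), forces $M \hconv N \not\simeq M \sconv N$. Hence, when I apply Theorem \ref{th:leclerc}\;(i) to write $[M \otimes N] = [M \hconv N] + \sum_j [T_j]$ with $\Lambda(M, T_j) < \Lambda(M, N)$, the simple module $M \sconv N$ must occur among the $T_j$. Matching with the decomposition $[M \otimes N] = [M \hconv N] + [M \sconv N] + \sum_k [S_k]$ displayed in the statement, one of the $T_j$'s is $M \sconv N$ and the remaining $T_j$'s coincide (as a multiset) with the $S_k$'s. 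This immediately yields $\Lambda(M, M \sconv N) < \Lambda(M, N)$ and $\Lambda(M, S_k) < \Lambda(M, N)$. An entirely parallel application of Theorem \ref{th:leclerc}\;(ii) gives $\Lambda(M \hconv N, M) < \Lambda(N, M)$ and $\Lambda(S_k, M) < \Lambda(N, M)$, which finishes case (i).

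For case (ii), when $N$ is real, I would apply Theorem \ref{th:leclerc} to the pair $(N, M)$ instead. Part (i) of that theorem gives $[N \otimes M] = [N \hconv M] + \sum_j [T_j]$ with $\Lambda(N, T_j) < \Lambda(N, M)$; using commutativity of $K(\uqm)$ and the identification $N \hconv M \simeq M \sconv N$, this rewrites as a decomposition of $[M \otimes N]$ whose singled-out term is $[M \sconv N]$. Thus $M \hconv N$ now appears among the $T_j$'s, and I conclude $\Lambda(N, M \hconv N) < \Lambda(N, M)$ and $\Lambda(N, S_k) < \Lambda(N, M)$. Similarly Theorem \ref{th:leclerc}\;(ii) applied to $(N, M)$, combined with $N \sconv M \simeq M \hconv N$, gives $\Lambda(M \sconv N, N) < \Lambda(M, N)$ and $\Lambda(S_k, N) < \Lambda(M, N)$.

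The proof is essentially bookkeeping, so I do not anticipate any serious obstacle. The only delicate point is the identification between heads/socles of $M \otimes N$ and those of $N \otimes M$, which is precisely what Theorem \ref{thm: KKKo15 main}\;(iii) supplies, and the observation that $M \hconv N$ and $M \sconv N$ are distinct when $M \otimes N$ is reducible, which follows from Theorem \ref{thm: KKKo15 main}\;(iv).
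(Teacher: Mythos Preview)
Your proposal is correct and matches the paper's intent: Corollary~\ref{cor:compest} is stated there without proof, as an immediate consequence of Theorem~\ref{th:leclerc}, and your argument spells out exactly that deduction, including the head/socle identifications from Theorem~\ref{thm: KKKo15 main} needed for part~(ii).
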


The following theorem is a $\uqpg$-analogue of \cite[Theorem 4.1]{KK18}:

\begin{theorem} \label{thm:commcv}
Let $X$ be a simple module and $M$  a real simple module in $\uqm$.
If $[X] = [M] \phi$   for some $\phi$ in $K(\uqm)$,
then $X \simeq M\otimes  Y$ for some simple module $Y$ in $\uqm$ which strongly commutes with $M$.
\end{theorem}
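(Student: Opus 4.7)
The plan is to construct a candidate $Y$ explicitly as a simple subquotient of a dual tensor product, and then use the Grothendieck-ring relation together with the $\Lambda$-invariant to force $M \tens Y \simeq X$.

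Since $M$ is real, its right dual $\rd M$ is also real---duality reverses tensor products ($\rd(A\tens B)\simeq\rd B\tens\rd A$), so $\rd M\tens\rd M\simeq\rd(M\tens M)$ remains simple. Define $Y\seteq\rd M\sconv X$, the simple socle of $\rd M\tens X$, which is well-defined by Theorem~\ref{thm: KKKo15 main} because $\rd M$ is real. Under the adjunction $\Hom(Y,\rd M\tens X)\simeq\Hom(M\tens Y,X)$, the embedding $Y\hookrightarrow\rd M\tens X$ corresponds to a non-zero, hence surjective, homomorphism $M\tens Y\twoheadrightarrow X$. Since $M\tens Y$ has simple head $M\hconv Y$ (Theorem~\ref{thm: KKKo15 main}, using that $M$ is real), this yields $X\simeq M\hconv Y$.

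By Theorem~\ref{thm: KKKo15 main}(iv) and Corollary~\ref{cor:commde}, proving $M\tens Y\simeq X$ reduces to showing $\delta(M,Y)=0$, i.e., that $M$ and $Y$ strongly commute. Let $K$ be the kernel of the surjection $M\tens Y\twoheadrightarrow X$; then in the Grothendieck ring
\[
[K]=[M\tens Y]-[X]=[M][Y]-[M]\phi=[M]\bl[Y]-\phi\br.
\]
By Theorem~\ref{th:leclerc}(i), every simple subquotient $S$ of $K$ satisfies $\Lambda(M,S)<\Lambda(M,M\hconv Y)=\Lambda(M,Y)=\Lambda(M,X)$.

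The heart of the proof---and the main obstacle---is to deduce $[K]=0$, equivalently $[Y]=\phi$. Writing $[Y]-\phi=\sum_i n_i[N_i]$ for distinct simple modules $N_i$ with $n_i\in\Z\setminus\{0\}$, we get $[K]=\sum_i n_i[M\tens N_i]$. Expanding each $[M\tens N_i]$ via Theorem~\ref{th:leclerc}(i) exhibits the leading $\Lambda(M,\cdot)$-terms as $\sum_i n_i[M\hconv N_i]$, where $\Lambda(M,M\hconv N_i)=\Lambda(M,N_i)$. An iterated cancellation of these leading terms---comparing against the strict upper bound $\Lambda(M,S)<\Lambda(M,Y)$ on simple subquotients of $K$, and using that $K(\uqm)$ is a polynomial ring in prime simples (hence a domain, so $[M]$ is not a zero divisor)---forces $\sum_i n_i[N_i]=0$, following the strategy of \cite[Theorem~4.1]{KK18}. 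The technical core is this bookkeeping of maximal $\Lambda(M,\cdot)$-contributions, for which the precise leading-term identifications in Theorem~\ref{th:leclerc} and the length-$2$ exact sequence of Proposition~\ref{Prop: l2} are the essential inputs. Once $[Y]=\phi$ is established, $[M\tens Y]=[X]$ gives $M\tens Y\simeq X$, and $M$ strongly commutes with $Y$ as required.
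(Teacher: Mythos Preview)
Your construction of $Y=\rd M\sconv X$ and the verification that $X\simeq M\hconv Y$ via adjunction is correct and elegant. The gap is in the step ``forces $\sum_i n_i[N_i]=0$''. The only constraints you have assembled are: (a) $[K]=[M]\bl[Y]-\phi\br$ with $[K]\ge0$, and (b) every simple constituent $S$ of $K$ satisfies $\La(M,S)<\La(M,Y)$. These do \emph{not} force $[Y]=\phi$. For instance, nothing you wrote rules out $[Y]-\phi=[N]$ for a single simple $N$ with $\La(M,N)<\La(M,Y)$; then $[K]=[M\tens N]\ge0$ and all its constituents satisfy $\La(M,\cdot)\le\La(M,N)<\La(M,Y)$, with no contradiction. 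Your appeal to ``iterated cancellation following \cite{KK18}'' does not match what actually happens in that argument.

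The paper's proof (and \cite[Theorem 4.1]{KK18}) is \emph{two-sided}: it selects $Y_{i_0}$ maximizing $\La(M,\,\cdot\,)$ among the positive terms of $\phi$ to get $X\simeq M\hconv Y_{i_0}$, and \emph{separately} selects $Y_{i_1}$ maximizing $\La(\,\cdot\,,M)$ to get $X\simeq Y_{i_1}\hconv M\simeq M\sconv Y_{i_1}$. Combining these yields $\La(Y_{i_0},M)\le\La(Y_{i_1},M)=\La(X,M)=\La(M\hconv Y_{i_0},M)$, and Corollary~\ref{cor:compest}(i) then forces $M$ and $Y_{i_0}$ to strongly commute. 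Your one-sided bound on $\La(M,\,\cdot\,)$ cannot substitute for this; to repair your argument you would need to bring in $\La(\,\cdot\,,M)$ as well---for example by also realizing $X$ as $Y'\hconv M$ for a suitable $Y'$ and comparing.
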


\begin{proof}
We may assume that
$$\phi=\sum_{i \in K }[Y_i]-\sum_{j\in K'}[Z_j],$$
where $Y_i$ and $Z_j$ are simple modules in $\uqm$ and there is no pair $(i,j)\in K\times K'$  such that $Y_i\simeq Z_j$.
It follows that
$$[X] +\sum_{j\in K' }[M\otimes  Z_j]= \sum_{i \in K}[M\otimes  Y_i] \qt{ and } [X] +\sum_{j\in K' }[Z_j\otimes  M]= \sum_{i \in K}[Y_i\otimes  M] $$
in $K(\uqm)$.
Take $i_0$ such that $\La(M,Y_{i_0})=\max\set{\La(M,Y_i)}{i\in K}$.
For any $j \in K'$, the head $M \hconv Z_j$ appears as a subquotient of  some $M \otimes  Y_i$.
Since $M \hconv Z_j \not \simeq  M \hconv Y_i$, we have
$$
\La(M,Z_j)=\La(M,M\hconv Z_j) < \La(M,M \hconv Y_i) = \La(M,Y_i) \le  \La(M,Y_{i_0}).
$$
Since any simple  subquotient $S$ of $M\otimes  Z_j$ satisfies
$$ \La(M,S) \le \La(M,Z_j) <\La(M,Y_{i_0}) =\La(M,M\hconv Y_{i_0}),$$
we conclude that $M\hconv Y_{i_0}$ does not appear in $M\otimes  Z_j$ for any $j \in K'$.
Hence $$X\simeq M\hconv Y_{i_0}.$$

In particular, we have
$$\La(M, Y_i)\le \La(M,Y_{i_0})=\La(M,M\hconv Y_{i_0})=\La(M, X)
\qt{for any $i\in K$.}$$

Take $i_1$ such that $\La(Y_{i_1},M)=\max\set{\La(Y_i,M)}{i\in K}$.
For any $j \in K'$, the head $Z_j \hconv M$ appears as a subquotient of some $Y_i \otimes M$. Since $Z_j \hconv M \not\simeq Y_i \hconv M$, we have
$$
\La(Z_j,M)=\La(Z_j\hconv M,M) < \La(Y_i\hconv M,M) = \La(M,Y_i) \le  \La(Y_{i_1},M).
$$
Thus, by the same reasoning as above, we have
$$  X \simeq Y_{i_1} \hconv M \simeq M \sconv Y_{i_1}.$$

In particular, we have
$$\La(Y_{i_0},M)
\le \La(Y_{i_1},M)=\La(Y_{i_1}\hconv M,M)=\La(X, M)=\La(M \hconv Y_{i_0},M). $$
Hence, if $M$ and $Y_{i_0}$ do not strongly commute, the
inequality $\La(Y_{i_0},M) \le \La(M \hconv Y_{i_0},M)$ contradicts Corollary~\ref{cor:compest} (i). Thus $M$ and $Y_{i_0}$  strongly commute and hence
$X\simeq\M\tens Y_{i_0}$.
\end{proof}

\begin{definition} [{cf.\ \cite[Definition 2.5]{KK18}}]
A sequence $(L_1,\ldots,L_r)$ of real simple modules  in $\uqm$ is called a \emph{normal sequence} if the composition of the
$R$-matrices
\eqn
\rmat{L_1,\ldots,L_r}\seteq
\displaystyle\prod_{1\le i <k \le r} \rmat{L_i,L_k} =&&(\rmat{L_{r-1},L_r})  \circ \cdots \circ (\rmat{L_2,L_r}\circ \cdots \circ \rmat{L_2,L_3})  \circ (\rmat{L_1,L_r} \circ \cdots  \circ \rmat{L_1,L_2})
\\
  &&: L_1\tens \cdots \tens L_r \longrightarrow L_r \tens \cdots \tens  L_1
\eneqn
does not vanish.
\end{definition}

The following two lemmas can be proved by the same arguments in \cite[Section 2.3]{KK18} with $\La$.

\begin{lemma}
If $(L_1,\ldots,L_r)$ is a normal sequence of real simple modules in $\uqm$, then
the image of $\rmat{L_1,\ldots,L_r}$ is simple and coincides with
the head of $L_1\conv \cdots \conv L_r$
and also with the socle of $ L_r \conv \cdots \conv  L_1$,
\end{lemma}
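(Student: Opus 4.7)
The plan is to induct on $r$, the base case $r \le 2$ being immediate from Theorem~\ref{thm: KKKo15 main}. For the inductive step, I would work with the decomposition
$$\rmat{L_1,\ldots,L_r} = (\rmat{L_2,\ldots,L_r} \otimes L_1) \circ \rmat{L_1,\,L_2 \otimes \cdots \otimes L_r}$$
obtained by combining the given product formula with Proposition~\ref{prop:rcomp}(iii). Since $\rmat{L_1,\ldots,L_r} \neq 0$, the sub-R-matrix $\rmat{L_2,\ldots,L_r}$ is also non-zero, so $(L_2,\ldots,L_r)$ is normal. By the induction hypothesis, $N' := L_2 \hconv \cdots \hconv L_r$ is the simple head of $N := L_2 \otimes \cdots \otimes L_r$, and $\rmat{L_2,\ldots,L_r}$ factors as a surjection $f : N \twoheadrightarrow N'$ followed by an inclusion $g : N' \hookrightarrow L_r \otimes \cdots \otimes L_2$.

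The key step is to apply Proposition~\ref{prop:sochdNM}(ii) with $M = L_1$, which requires the commutativity (up to a non-zero scalar) of the square formed by $\rmat{L_1, N}$, $\rmat{L_1, N'}$ and the quotient maps induced by $f$. Proposition~\ref{prop:subqcL} gives $\La(L_1, N') \le \La(L_1, N)$, and I would promote this to an equality by exploiting normality: were the inequality strict, the dual form of Proposition~\ref{prop:inequalities}(iii)/(iv) applied to $f : N \twoheadrightarrow N'$ would force the composition $(f \otimes L_1) \circ \rmat{L_1, N}$ to vanish, whence
$$\rmat{L_1,\ldots,L_r} = (g \otimes L_1) \circ (f \otimes L_1) \circ \rmat{L_1, N} = 0,$$
contradicting normality. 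Equality therefore holds, Proposition~\ref{prop:inequalities}(ii) (dual form) supplies the commutative square, and Proposition~\ref{prop:sochdNM}(ii) identifies $L_1 \hconv N' = L_1 \hconv \cdots \hconv L_r$ as the simple head of $L_1 \otimes \cdots \otimes L_r$.

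From the commutative square I would then rewrite $\rmat{L_1,\ldots,L_r} = (g \otimes L_1) \circ \rmat{L_1, N'} \circ (L_1 \otimes f)$. Since $L_1 \otimes f$ is surjective and $g \otimes L_1$ injective, the image of $\rmat{L_1,\ldots,L_r}$ coincides with the image of $\rmat{L_1, N'}$, which by Theorem~\ref{thm: KKKo15 main} is precisely the simple module $L_1 \hconv N'$. The assertion that this module is also the socle of $L_r \otimes \cdots \otimes L_1$ follows by the entirely symmetric argument, using Proposition~\ref{prop:sochdNM}(i) with $M = L_r$ and $N'' := L_{r-1} \otimes \cdots \otimes L_1$: normality of $(L_1,\ldots,L_{r-1})$ (obtained analogously from the dual decomposition $\rmat{L_1,\ldots,L_r} = (L_r \otimes \rmat{L_1,\ldots,L_{r-1}}) \circ \rmat{L_1 \otimes \cdots \otimes L_{r-1}, L_r}$) and the induction hypothesis give $\soc(N'') = L_1 \hconv \cdots \hconv L_{r-1}$, and the required commutative square is checked by the same interplay of Propositions~\ref{prop:subqcL} and~\ref{prop:inequalities}. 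The main obstacle I expect is precisely this verification: normality is used only indirectly, to rule out the strict inequality of $\La$-values, so one must carefully interlace the factorization $\rmat{L_2,\ldots,L_r} = g \circ f$ with the comparison proposition before invoking Proposition~\ref{prop:sochdNM}.
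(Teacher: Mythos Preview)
Your proposal is correct and follows essentially the same inductive strategy that the paper has in mind: the paper does not spell out a proof but refers to \cite[Section~2.3]{KK18}, where the argument proceeds exactly by factoring $\rmat{L_1,\ldots,L_r}$ through $\rmat{L_2,\ldots,L_r}$, invoking the induction hypothesis, and using the head/socle criterion (here Proposition~\ref{prop:sochdNM}) together with the $\La$-comparison (Proposition~\ref{prop:inequalities}) to control the relevant square. Your identification of the key step---ruling out a strict inequality $\La(L_1,N')<\La(L_1,N)$ by showing it would force $\rmat{L_1,\ldots,L_r}=0$---is precisely the point where normality is consumed, and matches the original argument.
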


\begin{lemma} \label{lemma:normal1}
Let $(L_1,\ldots,L_r)$  be a sequence of real simple  modules in $\uqm$.
Then the following three conditions are equivalent:
\bna
\item $(L_1,\ldots,L_r)$ is a normal sequence,
\item
$(L_2,\ldots,L_r)$ is a normal sequence and
$$\La(L_1, \hd(L_2\tens\cdots \tens L_r)) = \sum\nolimits_{2\le j\le r} \La(L_1,L_j),$$
\item
$(L_1,\ldots,L_{r-1})$ is a normal sequence and
$$\La(\hd(L_1\tens\cdots \tens L_{r-1}), L_r) = \sum\nolimits_{1\le j\le r-1} \La(L_j,L_r).$$
\ee
\end{lemma}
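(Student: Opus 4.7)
The plan is to prove $\text{(a)} \Leftrightarrow \text{(b)}$; the equivalence $\text{(a)} \Leftrightarrow \text{(c)}$ is entirely symmetric. Set $M \seteq L_2 \tens \cdots \tens L_r$ and $M' \seteq \hd(M)$. The first step is to realize, up to a nonzero scalar, the factorization
\begin{equation*}
\rmat{L_1, \ldots, L_r} \;=\; (\rmat{L_2, \ldots, L_r} \tens L_1) \circ \rmat{L_1, M}
\end{equation*}
on $L_1 \tens M$: the subcomposition $\rmat{L_1, L_r} \circ \cdots \circ \rmat{L_1, L_2}$ that moves $L_1$ to the rightmost position equals $\rmat{L_1, M}$ up to a constant by iterated application of Proposition~\ref{prop:rcomp}(iii), using that $L_1$ is simple. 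I also record the identity $\La(L_1, M) = \sum_{j=2}^r \La(L_1, L_j)$, which is immediate from Lemma~\ref{lem: Lambda tensor}(ii) since $L_1$ is simple.

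Assume (a). Then both factors in the displayed factorization must be nonzero; in particular $(L_2, \ldots, L_r)$ is normal, so by the preceding lemma $\rmat{L_2, \ldots, L_r}$ factors as $M \twoheadrightarrow M' \hookrightarrow L_r \tens \cdots \tens L_2$ with $M'$ simple. Since the inclusion tensored with $L_1$ remains injective, the nonvanishing of $\rmat{L_1, \ldots, L_r}$ is equivalent to the nonvanishing of the composition
$(\mathrm{pr}\tens L_1) \circ \rmat{L_1, M}\colon L_1 \tens M \to M' \tens L_1$,
where $\mathrm{pr}\colon M \twoheadrightarrow M'$ denotes the projection. Now Proposition~\ref{prop:subqcL} gives $\La(L_1, M') \le \La(L_1, M)$, and the analog of Proposition~\ref{prop:inequalities}(iii) for tensoring on the right (the variant mentioned in the note following that proposition) asserts that if this inequality were strict, the composition above would vanish. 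Hence equality $\La(L_1, M') = \La(L_1, M) = \sum_{j=2}^r \La(L_1, L_j)$ holds, which is (b).

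Conversely, assume (b). The equality $\La(L_1, M) = \La(L_1, M')$ together with the variant of Proposition~\ref{prop:inequalities}(ii) applied to $\mathrm{pr}\colon M \twoheadrightarrow M'$ yields the commutativity
\begin{equation*}
(\mathrm{pr} \tens L_1) \circ \rmat{L_1, M} \;=\; \rmat{L_1, M'} \circ (L_1 \tens \mathrm{pr}).
\end{equation*}
The right-hand side is nonzero: $L_1 \tens \mathrm{pr}$ is surjective and $\rmat{L_1, M'}$ is nonzero, being an $R$-matrix between two simple modules. Composing with the injection $M' \tens L_1 \hookrightarrow L_r \tens \cdots \tens L_2 \tens L_1$ recovers $\rmat{L_1, \ldots, L_r}$ up to a nonzero scalar, so $(L_1, \ldots, L_r)$ is normal. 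The main technical point throughout is that $M$ is not simple, so one must justify the existence and naturality of $\rmat{L_1, M}$ and apply Propositions~\ref{prop:inequalities} and~\ref{prop:subqcL} for subquotients; this is precisely what rational renormalizability via Proposition~\ref{prop:rcomp} provides. The proof of $\text{(a)} \Leftrightarrow \text{(c)}$ is the mirror argument, using instead the factorization $\rmat{L_1, \ldots, L_r} = \rmat{L_{r-1} \tens \cdots \tens L_1, L_r} \circ (\rmat{L_1, \ldots, L_{r-1}} \tens L_r)$ (valid up to a nonzero scalar by repeated use of~\eqref{eq: commutativity}) combined with Lemma~\ref{lem: Lambda tensor}(i).
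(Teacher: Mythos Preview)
Your argument is correct and follows the approach the paper intends: it omits the proof here, referring to \cite[Section 2.3]{KK18}, and the argument there is precisely the one you give, namely the factorization $\rmat{L_1,\ldots,L_r}=(\rmat{L_2,\ldots,L_r}\tens L_1)\circ\rmat{L_1,M}$ combined with the comparison of $\La(L_1,M)$ and $\La(L_1,M')$ via Proposition~\ref{prop:inequalities}. One small remark: in the direction (b)$\Rightarrow$(a) you justify $\rmat{L_1,M'}\neq0$ by simplicity, but in fact $\rmat{}$ is nonzero by its very definition as the specialization of $\Rren$, so simplicity of $M'$ is not needed for that step.
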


\begin{lemma}
For real simple modules $L$, $M$ and $N$ in $\uqm$, $(L,M,N)$ is a normal sequence if either $L$ and $M$ strongly commute or
$L$ and $N^*$ strongly commute.
\end{lemma}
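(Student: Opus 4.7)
The plan is to reduce the claim to the normality criterion in Lemma~\ref{lemma:normal1} and then invoke the additivity statements already established for $\Lambda$ under strong commutation. Two-term sequences such as $(L,M)$ and $(M,N)$ are automatically normal (the $R$-matrix between two simple modules is, by definition, non-zero), so in either case the nontrivial input needed is the equality of $\Lambda$ with the corresponding sum over summands.

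For the first case, assume $L$ and $M$ strongly commute. Then $L \tens M$ is itself simple, so $\hd(L\tens M)=L\tens M$. Applying Lemma~\ref{lem: Lambda tensor}(i) with $N$ simple (which forces equality rather than inequality) gives
\[
\Lambda(\hd(L\tens M),\,N)=\Lambda(L\tens M,\,N)=\Lambda(L,N)+\Lambda(M,N).
\]
Since $(L,M)$ is normal, the equivalence (a)$\Leftrightarrow$(c) of Lemma~\ref{lemma:normal1} yields that $(L,M,N)$ is normal.

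For the second case, assume $L$ and $N^*$ strongly commute. This is exactly the hypothesis of Corollary~\ref{cor:aadd}(ii) (with the roles of $M$ and $N$ as in its statement), which gives
\[
\Lambda(L,\,M\hconv N)=\Lambda(L,M)+\Lambda(L,N).
\]
Since $M\hconv N=\hd(M\tens N)$ and $(M,N)$ is normal, the equivalence (a)$\Leftrightarrow$(b) of Lemma~\ref{lemma:normal1} again yields that $(L,M,N)$ is normal.

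There is no real obstacle here: the whole argument is a repackaging of the additivity of $\Lambda$ with respect to $\hd$ under a strong-commutation hypothesis (namely Lemma~\ref{lem: Lambda tensor}(i) when the commuting factor sits on the left of $N$, and Corollary~\ref{cor:aadd}(ii) when the commuting factor $L$ meets $M\hconv N$ from the outside via $N^*$), combined with the characterization of normal sequences in Lemma~\ref{lemma:normal1}. The only point that deserves a moment's care is to check that the relevant two-term subsequence is normal, but this is automatic.
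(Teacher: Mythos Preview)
Your proof is correct and essentially follows the paper's approach: both reduce to Lemma~\ref{lemma:normal1} via an additivity statement for $\Lambda$, with Case~2 handled identically through Corollary~\ref{cor:aadd}(ii). The only minor difference is in Case~1, where the paper invokes Lemma~\ref{lem:LMN1}(ii) to obtain $\Lambda(L,M\hconv N)=\Lambda(L,M)+\Lambda(L,N)$ and then uses criterion~(b) of Lemma~\ref{lemma:normal1}, whereas you use Lemma~\ref{lem: Lambda tensor}(i) to obtain $\Lambda(L\tens M,N)=\Lambda(L,N)+\Lambda(M,N)$ and then use criterion~(c); this is an inessential variation.
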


\begin{proof}
The first case follows from Lemma~\ref{lem:LMN1},
and the second case from Corollary~\ref{cor:aadd}.
\end{proof}

\begin{corollary}
For real simple modules $L$, $M$ and $N$ in $\uqm$, $(L^*,M,N)$ is a normal sequence if and only if $(M,N,L)$ is a normal sequence.
\end{corollary}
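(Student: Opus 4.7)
The plan is to apply Lemma~\ref{lemma:normal1} to each of the two three-term sequences, reducing both normality assertions to the same pair of conditions, and then to use Proposition~\ref{prop: sym lam} to identify them.

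Concretely, by the equivalence (a)$\Leftrightarrow$(b) of Lemma~\ref{lemma:normal1} applied to $(L^*,M,N)$, that sequence is normal iff $(M,N)$ is normal and
\[
\La\bl L^*,\;\hd(M\tens N)\br=\La(L^*,M)+\La(L^*,N).
\]
Likewise, by the equivalence (a)$\Leftrightarrow$(c) applied to $(M,N,L)$, that sequence is normal iff $(M,N)$ is normal and
\[
\La\bl\hd(M\tens N),\;L\br=\La(M,L)+\La(N,L).
\]
The pair $(M,N)$ is automatically a normal sequence, since for a two-term sequence normality just means $\rmat{M,N}\ne 0$, which holds by the definition of the $R$-matrix. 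Hence the equivalence of the two assertions reduces to the equivalence of the two displayed $\La$-identities.

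This equivalence is a direct application of Proposition~\ref{prop: sym lam}, which states $\La(A,B)=\La(B,\rd A)$ for any simple $A,B$ in $\uqm$. Taking $A=L^*$ and using $\rd(L^*)\simeq L$, we obtain
\[
\La(L^*,X)=\La(X,L)
\]
for every simple $X$. Applying this with $X=M$, with $X=N$, and with $X=\hd(M\tens N)=M\hconv N$ (which is simple by Theorem~\ref{thm: KKKo15 main}, as $L,M,N$ are real), we see that the first $\La$-identity transforms, term by term, into the second, so the two conditions coincide and the corollary follows.

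There is essentially no obstacle here; the proof is purely formal. The only things to verify are the duality identity $\rd(L^*)\simeq L$ (which holds because $(-)^*$ and $\rd(-)$ are mutually inverse autoequivalences on the rigid monoidal category $\uqm$) and that the intermediate head $M\hconv N$ is simple so that Proposition~\ref{prop: sym lam} can be applied to it.
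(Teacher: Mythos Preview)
Your proof is correct and follows essentially the same approach as the paper: both reduce the equivalence via Lemma~\ref{lemma:normal1} to the equality
\[
\La(L^*, M)+\La(L^*,N)-\La(L^*,M\hconv N)=\La(M,L)+\La(N,L)-\La(M\hconv N,L),
\]
which is then obtained termwise from Proposition~\ref{prop: sym lam}. Your version is slightly more explicit in justifying that $(M,N)$ is normal and that $M\hconv N$ is simple, but the argument is the same.
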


\begin{proof}
Proposition~\ref{prop: sym lam} implies that
$$\La(L^*, M)+\La(L^*,N)-\La(L^*,M\hconv N)
=\La(M,L)+\La(N,L)-\La(M\hconv N.L).$$
Then  our assertion follows from Lemma~\ref{lemma:normal1}
since $(M,N)$ is a normal sequence.
\end{proof}

\vs{5ex}

\section{Cluster algebras} \label{sec: cluster algebra}
In this section, we briefly recall the definition of cluster algebra  with little modifications.
For more detail, we refer the reader to \cite{BZ05,FZ02}.
Fix a countable index set $K=K^\ex \sqcup K^\fr$ which decomposes into subset $K^\ex$ of exchangeable indices  and  a  subset $K^\fr$ of frozen indices.

Let $\widetilde{B}=(b_{ij})_{(i,j)\in K \times \Kex}$ be an integer-valued matrix such that
\eq &&\hs{2ex}
\parbox{75ex}{
\begin{enumerate}[{\rm (a)}]
\item  for each $j \in \Kex$, there exist finitely many $i \in \K$ such that $b_{ij} \ne 0$,
\item  the {\it principal part} $B \seteq (b_{ij})_{i,j \in \Kex}$ is skew-symmetric.
\end{enumerate}
}\label{eq: condition B}
\eneq
We extend the definition of $b_{ij}$ for $(i,j)\in K\times K$ by:
$$\text{
$b_{ij}=-b_{ji}$ if $i\in \Kex$ and $j\in K$ and
$b_{ij}=0$ for $i,j\in \Kfr$, }$$
so that $(b_{ij})_{i,j\in \K}$ is skew-symmetric.

To the matrix $\tB$, we associate the quiver $\mathfrak{Q}_{\tB}$ such that
the set of vertices is $\K$ and
the number of arrows from $i\in\K$ to $ j\in\K$ is $\max(0,b_{ij})$.
Then, $\mathfrak{Q}_{\tB}$ satisfies that
\begin{eqnarray} &&
\parbox{75ex}{
\begin{enumerate}[{\rm (a)}]
\item the set of vertices of $\mathfrak{Q}_{\tB}$ are labeled by $\K$,
\item $\mathfrak{Q}_{\tB}$ does not have any loop, any $2$-cycle nor arrow between frozen vertices,
\item each  exchangeable vertex $v$ of $\mathfrak{Q}_{\tB}$ has {\it finite degree}; that is, the number of arrows incident with $v$ is finite.
\end{enumerate}
}\label{eq: Quiver condition}
\end{eqnarray}

Conversely, for a given quiver satisfying~\eqref{eq: Quiver condition}, we can associate a matrix $\tB$ by
\begin{align}\label{eq: bij}
b_{ij} \seteq \text{(the number of arrows from $i$  to $j$)} \hspace{-.2ex}   -  \hspace{-.2ex}  \text{(the number of arrows from $j$  to $i$)}.
\end{align}
Then $\tB$ satisfies~\eqref{eq: condition B}.

Let $L=(\la_{ij})_{i,j\in K}$ be a skew-symmetric integer-valued $K \times K$-matrix. We say that $L$ is \emph{compatible} with
$\widetilde{B}$ with a positive integer $d \in \Z_{\ge1}$, if
$$   \sum_{k \in \K} \lambda_{ik}b_{kj} =\delta_{i,j}d \qquad \text{ for each $i \in \K$ and $j \in \Kex$}.$$

Let $\{ X_i\}$ be the set of mutually commuting indeterminates.

\Def
For a commutative ring $A$, we say that a triple $\Seed_{\Uplambda}=(\{ x_i \}_{i \in K}, L,\tB) $ is a \emph{$\Uplambda$-seed} in $A$ if
\bna
\item
there exists an injective algebra homomorphism $\Z[X_i]_{i \in K}$ into $A$ such that $X_i \mapsto x_i$,
\item $(L,\tB)$ is a compatible pair with respect to $d \in \Z_{\ge 1}$.
{\em In this paper, we always assume that $d=2$.}
\ee
\edf

For a $\Uplambda$-seed $\Seed_{\Uplambda}=(\{ x_i \}_{i \in K}, L,\tB)$,
we call the set $\{ x_i \}_{i \in K}$  the \emph{cluster} of $\Seed_{\Uplambda}$ and
its elements the \emph{cluster variables}.
An element of the form   $x^{{\bf a}}$ $\bigl({\bf a} \in \Z_{\ge 0}^{\oplus \K}\bigr)$
is called a \emph{cluster monomial},
where
$$ x^{{\bf c}}  \seteq  \prod_{k \in K} x_{i_k}^{c_{i_k}} \quad \text{ for }  \ {\bf c}=(c_{i})_{i\in\K} \in \Z^{\oplus \K}.$$

Let $\Seed_{\Uplambda}=(\{ x_i \}_{i \in K}, L,\tB)$ be a $\Uplambda$-seed in
a field $\mathfrak{K}$ of characteristic $0$.
For each $k \in \Kex$, we define
\begin{eqnarray} &&
\parbox{81ex}{
\begin{enumerate}
\item[{\rm (a)}] $\mu_k(L)_{ij} =
\begin{cases}
  -\la_{kj}+\displaystyle\sum _{t\in\K} \max(0, -b_{tk}) \la_{tj} \quad \  & \text{if} \ i=k, \ j\neq k, \\
  -\la_{ik}+\displaystyle\sum _{t\in\K} \max(0, -b_{tk}) \la_{it} & \text{if} \ i \neq k, \ j= k, \\
   \la_{ij} & \text{otherwise,}
\end{cases}$

\vs{2ex}
\item[{\rm (b)}]  $\mu_k(\tB)_{ij} =
\begin{cases}
  -b_{ij} & \text{if}  \ i=k \ \text{or} \ j=k, \\
  b_{ij} + (-1)^{\true(b_{ik} < 0)} \max(b_{ik} b_{kj}, 0) & \text{otherwise,}
\end{cases}
$

\vs{1ex}
\item[{\rm (c)}] $ \hspace{0.25em} \    \mu_k(x)_i  =\begin{cases}
x^{{\bf a}'}  +   x^{{\bf a}''}, & \text{if} \ i=k, \\
x_i & \text{if} \ i\neq k,
\end{cases}
$
\end{enumerate}
}\label{eq: mutation in a direction k}
\end{eqnarray}
where ${\bf a}'\seteq(a_i')_{i\in\K}$ and ${\bf a}''\seteq(a_i'')_{i\in\K} \in \Z^{\oplus \K}$ are defined as follows:
\begin{align*}
a_i'= \begin{cases}
  -1 & \text{if} \ i=k, \\
 \max(0,b_{ik}) & \text{if} \ i\neq k,
\end{cases} \qquad
a_i''= \begin{cases}
  -1 & \text{if} \ i=k, \\
 \max(0,-b_{ik}) & \text{if} \ i\neq k.
\end{cases}
\end{align*}
Then the triple
$$\mu_k(\Seed_{\Uplambda}) \seteq (   \{ \mu_k(x)_i\}_{k \in K}, \mu_k(L),\mu_k(\tB) )$$
becomes a new $\Uplambda$-seed in $\mathfrak{K}$ and we call it the \emph{mutation} of $\Seed_{\Uplambda}$ at $k$.

The \emph{cluster algebra $\mathscr{A}(\Seed_{\Uplambda})$ associated to the $\Uplambda$-seed} $\Seed_\Uplambda$
 is the $\Z$ -subalgebra of the field $\mathfrak{K}$  generated by all the cluster variables in the $\Uplambda$-seeds obtained from
$\Seed_{\Uplambda}$ by all possible successive mutations.

A \emph{cluster algebra structure associated to a $\Uplambda$-seed $\Seed_\Uplambda$} on a $\Z$-algebra $A$
is a family $\mathscr{F}$ of $\Uplambda$-seeds in $A$ such that
\begin{enumerate}[{\rm (a)}]
\item for any $\Uplambda$-seed $\Seed_{\Uplambda}$ in $\mathscr{F}$,
the cluster algebra $\mathscr{A}(\Seed_\Uplambda)$ is isomorphic to $A$,
\item any mutation of a $\Uplambda$-seed in $\mathscr{F}$ is in $\mathscr{F}$,
\item for any pair $\Seed_\Uplambda$, $\Seed'_\Uplambda$ of $\Uplambda$-seeds in $\mathscr{F}$,
$\Seed'_\Uplambda$ can be obtained from $\Seed_\Uplambda$ by
a finite sequence of mutations.
\end{enumerate}

Note that the definition of cluster algebra associated to a $\Uplambda$-seed is designed for the
Grothendieck ring $\K(\Ca_\g)$ of $\Ca_\g$
and can be understood as an intermediate one between a cluster algebra and a quantum cluster algebra.
When we ignore $L$ in each $\Uplambda$-seed $\Seed_\Uplambda$, we recover the definition of cluster algebra.

\section{Monoidal categorification} \label{sec: monoidal categorification}
In this section, we construct a $\uqpg$-analogue of \cite[Section 7]{KKKO18}.
{\em From now on, $\shc$ is  a full subcategory of $\uqm$
stable under taking tensor products, subquotients and extensions.}
Note that $K(\shc)$ has a $\Z$-basis consisting of
the isomorphism classes of simple modules.

\begin{definition}
A \emph{monoidal seed in $\shc$} is
a pair $\seed = (\{ M_i\}_{i\in \K },\widetilde B)$
consisting of
a strongly
commuting family $\{ M_i\}_{i\in\K}$ of real simple modules in
$\shc$ and
an  integer-valued $\K\times\Kex$-matrix
$\widetilde B = (b_{ij})_{(i,j)\in\K\times\Kex}$
satisfying the conditions in~\eqref{eq: condition B}.

For $i\in\K$, we call $M_i$ the $i$-th {\em cluster variable module} of $\seed$.
\end{definition}

For a monoidal seed  $\seed=(\{M_i\}_{i\in\K}, \widetilde B)$, let
$\La^\seed=(\La^\seed_{ij})_{i,j\in\K}$
be the skew-symmetric matrix
given by $\La^\seed_{ij}=\Lambda(M_i,M_j)$.

\begin{definition}
For $k\in\Kex$, we say that a  monoidal seed  $\seed = (\{ M_i\}_{i\in \K },\widetilde B)$ \emph{admits a mutation in direction $k$} if
there exists a simple object  $M_k' \in \shc$ such that
\bna
  \item
there exist exact sequences in $\shc$
\eqn
&&0 \to  \sotimes_{b_{ik} >0} M_i^{\tens b_{ik}} \to M_k \tens M_k' \to
 \sotimes_{b_{ik} <0} M_i^{\tens (-b_{ik})} \to 0, \label{eq:ses_mutation1}\\
 &&0 \to  \sotimes_{b_{ik} <0} M_i^{\tens(-b_{ik})} \to M_k' \tens M_k \to
  \sotimes_{b_{ik} >0} M_i^{\tens b_{ik}} \to 0.\label{eq:ses_mutation2}
\eneqn
\item
The pair $\mu_k(\seed)\seteq
(\{M_i\}_{i\neq k}\cup\{M_k'\},\mu_k(\widetilde B))$ is
a monoidal seed in $\shc$.
\end{enumerate}
\end{definition}
Condition (b) is equivalent to saying that $M'_k$ is real and strongly commuting with $M_i$ for any $i\in\K\setminus\st{k}$.
\smallskip

\begin{definition} \label{def:admissible}
A monoidal seed $\seed=(\{M_i\}_{i\in\K}, \widetilde B)$ is called \emph{admissible} if,
for each $k\in\Kex$, there exists a simple object $M'_k$ of $\shc$  such that
 there is an exact sequence in $\shc$
\begin{align}\label{eq:ses mutation}
0 \to  \sotimes_{b_{ik} >0} M_i^{\tens  b_{ik}} \to M_k \otimes M_k' \to \sotimes_{b_{ik} <0} M_i^{\tens  (-b_{ik})} \to 0,
\end{align}
and $M_k'$ commutes with $M_i$  for any  $i \neq k$.
\end{definition}

Note that $M'_k$ is uniquely determined by $k$ and $\seed$.
Indeed, it follows from
$ M_k\hconv M'_k\simeq\sotimes_{b_{ik} <0} M_i^{\tens(-b_{ik})}$ and \cite[Corollary 3.7]{KKKO15}.

It is evident that  a monoidal seed which admits a mutation
at all $k\in\Kex$ is admissible.
Indeed, the converse is true.

\begin{proposition} \label{prop:condition simplified}
Let $\seed=(\{M_i\}_{i\in\K},\widetilde B)$ be an admissible monoidal seed in $\shc$ and $k\in\Kex$,
Let $M'_k$ be as in {\rm Definition~\ref{def:admissible}}.
Then we have the following properties.
\bnum
\item
The monoidal seed $\seed$ admits a mutation in direction $k$.
In particular, $M_k'$ is a real simple object. \label{item:1}
\item For any $j \in \K$, we have $(\Lambda^\seed\;\widetilde{B})_{jk}=-2 \delta_{jk} \de(M_k,M_k')$. \label{item:2}
\item For any $j\in\K$, we have
\begin{align}\label{eq:Larel}
&\La(M_j,M'_k)=-\La(M_j,M_k)-\sum_{b_{ik}<0}\La(M_j,M_i)b_{ik},\\[1ex]
&\La(M'_k,M_j)=-\La(M_k,M_j)+\sum_{b_{ik}>0}\La(M_i,M_j)b_{ik}.
\end{align} \label{item:Larel}
\end{enumerate}
\end{proposition}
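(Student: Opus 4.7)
My plan is to establish (i), (iii), and (ii) in that order, since (ii) is a direct computation from (iii). The key observation that unlocks everything else is to upgrade the hypothesis ``$M'_k$ commutes with $M_i$'' to strong commutation.

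First I will show $M'_k$ strongly commutes with $M_i$ for every $i\in\K\setminus\{k\}$. From the given isomorphism $M'_k\tens M_i\simeq M_i\tens M'_k$ together with the fact that $M_i$ is real, Theorem~\ref{thm: KKKo15 main}(i)---which says that $\Hom(M'_k\tens M_i,\,M_i\tens M'_k)$ is one-dimensional and spanned by $\rmat{M'_k,M_i}$---forces $\rmat{M'_k,M_i}$ to be an isomorphism. By Theorem~\ref{thm: KKKo15 main}(iii), its image is $M'_k\hconv M_i$, hence $M_i\tens M'_k\simeq M'_k\hconv M_i$ is simple, which is exactly strong commutation. Writing $X\seteq\sotimes_{b_{ik}>0}M_i^{\tens b_{ik}}$ and $Y\seteq\sotimes_{b_{ik}<0}M_i^{\tens(-b_{ik})}$, it follows that $X\tens M'_k$ and $Y\tens M'_k$ are simple, and Proposition~\ref{prop:real}(ii) applied to the given exact sequence $0\to X\to M_k\tens M'_k\to Y\to 0$ (with the real module $M_k$) yields that $M'_k$ is real.

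To produce the reversed exact sequence, note that Theorem~\ref{thm: KKKo15 main} now guarantees $M_k\tens M'_k$ has a simple socle and a simple head, which must be $X$ and $Y$ respectively in view of the given sequence. Applying Theorem~\ref{thm: KKKo15 main}(iii) to $\rmat{M_k,M'_k}$ and $\rmat{M'_k,M_k}$ identifies the socle of $M'_k\tens M_k$ as $Y$ and its head as $X$; combined with $[M'_k\tens M_k]=[X]+[Y]$ in $K(\shc)$, this gives the desired sequence. Together with the strong commutations established above, this also shows $\mu_k(\seed)$ is a monoidal seed, completing (i).

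For (iii), I invoke Lemma~\ref{lem:LMN1}. For any $j\in\K$, $M_j$ strongly commutes with $M_k$: for $j\neq k$ both belong to the seed, while for $j=k$ this holds because $M_k$ is real. Since $M_k\hconv M'_k=Y$, Lemma~\ref{lem:LMN1}(ii) gives
\[
\La(M_j,M_k)+\La(M_j,M'_k)=\La(M_j,Y),
\]
and Lemma~\ref{lem: Lambda tensor}(ii) evaluates the right side as $-\sum_{b_{ik}<0}b_{ik}\La(M_j,M_i)$, yielding the first formula. The second follows symmetrically from Lemma~\ref{lem:LMN1}(i) applied to the reversed sequence, using $M'_k\hconv M_k=X$.

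Finally, (ii) is obtained by splitting $(\La^\seed\widetilde B)_{jk}=\sum_i\La(M_j,M_i)b_{ik}$ according to the sign of $b_{ik}$ and substituting (iii). For $j\neq k$ I additionally use $\La(M_j,M_i)=-\La(M_i,M_j)$, which holds because $\de(M_i,M_j)=0$ by Corollary~\ref{cor:commde} (strong commutation within the seed), and for $j=k$ because $M'_k$ strongly commutes with $M_i$ ($i\neq k$); the two halves then cancel. For $j=k$, using $b_{kk}=0$ and $\La(M_k,M_k)=0$ from Corollary~\ref{cor:real0}, the substitution collapses to $-\La(M_k,M'_k)-\La(M'_k,M_k)=-2\de(M_k,M'_k)$ by Corollary~\ref{cor:desym}. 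The sole non-routine step is the commutation-upgrade in the first paragraph; everything else is bookkeeping with the lemmas already established in the preceding sections.
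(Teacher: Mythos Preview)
Your argument is correct and follows the paper's approach almost verbatim: reality of $M'_k$ via Proposition~\ref{prop:real}\eqref{item: real ii}, the reversed exact sequence from the head/socle description, the two $\Lambda$-identities from Lemma~\ref{lem:LMN1}, and then the computation of $(\Lambda^\seed\widetilde B)_{jk}$.

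The one genuine addition in your write-up is the preliminary step upgrading ``commutes'' to ``strongly commutes'' for $M'_k$ and the $M_i$ ($i\neq k$) via Theorem~\ref{thm: KKKo15 main}\eqref{runiq}. The paper's proof jumps directly to Proposition~\ref{prop:real}, whose hypotheses require $X\tens M'_k$ and $Y\tens M'_k$ to be simple; your observation makes that step transparent. Your phrasing in the derivation of (ii) is a little tangled (for $j\neq k$ the two contributions do not cancel each other but are each zero, using $\de(M_j,M_k)=0$ from the seed and $\de(M_j,M'_k)=0$ from the strong commutation you just established), but the computation itself is fine and matches the paper's.
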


\begin{proof}
(i) The reality of $M'_k$ follows from the exact sequence
\eqref{eq:ses mutation} by applying Proposition \ref{prop:real}~\eqref{item: real ii} to the case
$$M=M_k, \ N=M'_k, \ X= \sotimes_{b_{ik} > 0} M_i^{\tens b_{ik}} \ \text{and} \  Y= \sotimes_{b_{ik} < 0} M_i^{\tens(-b_{ik})}. $$
Note that $N\tens M$ has the same length as the one of $M\tens N$, that is $2$.
Since $N\sconv M\simeq M\hconv N\simeq Y$ and
$N\hconv M\simeq M\sconv N\simeq X$,
we have an exact sequence $0\to Y\to N\tens M\to X\to0$.

\medskip
\noindent \eqref{item:Larel} follows from
\begin{align*}
\La(M_j,M_k)+\La(M_j,M'_k)&=\La(M_j,M_k\hconv M'_k)
=\La\bl M_j,\sotimes_{b_{ik} <0} M_i^{\tens(-b_{ik})}\br\\
&=\sum_{b_{ik}<0}\La(M_j,M_i)(-b_{ik})
\end{align*}
and
\begin{align*}
\La(M_k,M_j)+\La(M'_k,M_j)&=\La(M'_k\hconv M_k,M_j)
=\La\bl\sotimes_{b_{ik} >0} M_i^{\tens b_{ik}}, M_j\br\\*
&=\sum_{b_{ik}>0}\La(M_i,M_j)b_{ik}.
\end{align*}

\noindent \eqref{item:2} follows from \eqref{item:Larel} as follows:
\begin{align*}
2 \delta_{jk} \de(M_k,M_k')&=2\de(M_j,M'_k)=\La(M_j,M'_k)+\La(M'_k,M_j)\\
 &=-\La(M_j,M_k)-\La(M_k,M_j)-\sum_{b_{ik}<0}\La(M_j,M_i)b_{ik}
+\sum_{b_{ik}>0}\La(M_i,M_j)b_{ik}\\
&=-2\de(M_j,M_k)-\sum_{b_{ik}<0}\La(M_j,M_i)b_{ik}-\sum_{b_{ik}>0}\La(M_j,M_i)b_{ik}\\
&=-\sum_{i\in\K}\La(M_j,M_i)b_{ik}. \qedhere
\end{align*}
\end{proof}

\Def
We say that a monoidal seed $\seed=(\{M_i\}_{i\in \K},\widetilde B)$ is $\Uplambda$-admissible if $\seed$ is an admissible
monoidal seed and $(-\Lambda^\seed,\widetilde B)$ is compatible with $2$; i.e.,
$$ (\Lambda^\seed\tB)_{jk}=-2\delta_{jk}\qt{for $(j,k)\in\K\times\Kex$.}$$
\edf
Note that the compatibility condition  is equivalent to saying that
$\de(M_k,M'_k)=1$
for any $k\in\Kex$.

\smallskip
For a monoidal seed
$\seed=(\{M_i\}_{i\in \K},\widetilde B)$ in $\shc$,
we define the triple $[\seed]_\Uplambda$ in $K(\shc)$ by
$$[\seed]_\Uplambda\seteq\bl\{ [M_i] \}_{i \in K},-\La^\seed,\tB\br.$$

If $\seed$ is a $\Uplambda$-admissible monoidal seed, then $[\seed]_\Uplambda$
is a $\Uplambda$-seed.

The following lemma immediately follows from
Proposition~\ref{prop:condition simplified}.
\Lemma
Let $\seed=(\{ M_i \}_{i \in K},\tB)$ be a  $\Uplambda$-admissible monoidal seed,
and $k\in \Kex$.
Then we have
$$\mu_k\bl[\seed\,]_\Uplambda\br=[\,\mu_k(\seed)\,]_\Uplambda.$$
In particular, $\bl-\La^{\mu_k(\seed)},\mu_k(\tB)\br$ is compatible with $2$.
\enlemma

\begin{definition} 
A category $\shc$ is called a \emph{$\Uplambda$-monoidal categorification} of a cluster algebra $A$ if
\begin{enumerate}
\item the Grothendieck ring $K(\shc)$ is isomorphic to $A$,
\item there exists a monoidal seed $\seed=(\{ M_i \}_{i \in K},\tB)$ in $\shc$ such that
$$[\seed]_{\Uplambda}\seteq(\{ [M_i] \}_{i \in K},-\La^\seed,\tB)$$
is the initial $\Uplambda$-seed of $A$ and $\seed$ admits successive mutations in all possible directions.
\end{enumerate}
\end{definition}

\begin{definition} \label{def: max com}
A family of real simple modules $\{M_i\}_{i\in K}$ in $\shc$ is called a {\it maximal real commuting family in $\shc$} if it satisfies:
\bna
\item $\{M_i\}_{i\in K}$ are mutually strongly commuting,  and
\item
if a simple  module $X$ strongly commutes with all the $M_i$'s,
then $X$ is isomorphic to $\tens_{i \in \K}M^{\tens a_i}$
for some $\mathbf{a} \in \Z_{\ge 0}^{\oplus K}$.
\ee
\end{definition}

The following theorem can be proved similarly to
 its quiver Hecke algebra version \cite[Theorem 2.4]{KK18}.
\begin{theorem}\label{th;main}
Let $X$ be a simple module in $\shc$ which is a monoidal categorification of a cluster algebra $A$ associated to a $\Uplambda$-seed. If
 $\seed=(\{M_i\}_{i \in K}, \tB)$ a monoidal seed of $\shc$ and induces a $\Uplambda$-seed of $A$, then $\{M_i\}_{i \in K}$ is a maximal real commuting family in $\shc$.
\end{theorem}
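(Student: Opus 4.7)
The plan is to adapt the quiver Hecke algebra argument of \cite[Theorem 2.4]{KK18} to the quantum affine setting, using Theorem~\ref{thm:commcv} in place of its KLR analogue. Let $X$ be a simple module in $\shc$ which strongly commutes with every $M_i$; our goal is to exhibit $X\simeq \sotimes_{i\in\K}M_i^{\tens a_i}$ for some $\mathbf{a}\in\Z_{\ge0}^{\oplus\K}$. Since $\{[M_i]\}_{i\in\K}$ is a cluster in $A\simeq K(\shc)$, the Laurent phenomenon expresses $[X]$ as a Laurent polynomial in the variables $[M_i]$. Choose $\nu_i\in\Z_{\ge 0}$ large enough (only finitely many non-zero) so that $Z\seteq X\tens\sotimes_{i\in\K}M_i^{\tens\nu_i}$ is a simple module and $[Z]=[X]\prod_{i\in\K}[M_i]^{\nu_i}$ is an honest polynomial $Q\in\Z[T_i:i\in\K]$ evaluated at $T_i=[M_i]$; the simplicity of $Z$ uses that $X$ strongly commutes with each $M_i$ and that the $M_i$'s mutually strongly commute.

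The key observation is that the monomial classes form a subset of a $\Z$-basis of $K(\shc)$. Indeed, for any $\alpha\in\Z_{\ge 0}^{\oplus\K}$ the module $\sotimes_{i\in\K}M_i^{\tens\alpha_i}$ is real simple, and distinct multi-indices $\alpha$ give non-isomorphic simples because their classes $\prod[M_i]^{\alpha_i}$ are distinct elements of $A$ (the cluster variables $\{[M_i]\}$ are algebraically independent in the ambient field). Expanding $[Z]=Q(\{[M_i]\})=\sum_{\alpha}c_\alpha[\sotimes_i M_i^{\tens\alpha_i}]$ and using that $[Z]$ itself is a single basis element (the class of the simple $Z$), the $\Z$-linear independence of distinct simple classes forces exactly one $c_\alpha$ to be non-zero, with value $1$. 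Thus $Q=\prod_{i\in\K}[M_i]^{a_i}$ for some $\mathbf{a}\in\Z_{\ge0}^{\oplus\K}$, and $Z\simeq\sotimes_{i\in\K}M_i^{\tens a_i}$.

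It remains to compare exponents: from $X\tens\sotimes_i M_i^{\tens\nu_i}\simeq\sotimes_i M_i^{\tens a_i}$ we wish to conclude $a_i\ge\nu_i$ for every $i$, so that $X\simeq\sotimes_i M_i^{\tens(a_i-\nu_i)}$ with non-negative exponents. Suppose for contradiction that $\nu_{i_0}>a_{i_0}$ for some $i_0\in\K$. Apply Theorem~\ref{thm:commcv} iteratively with the real simple $M_{i_0}$ to the module $Z$: each successful peel replaces $Z$ by a simple $Y$ with $Z\simeq M_{i_0}\tens Y$, requires $[M_{i_0}]$ to divide the current class in $K(\uqm)$, and decreases both $\nu_{i_0}$ and $a_{i_0}$ by $1$. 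After $a_{i_0}$ peels on the right side one is left with a tensor monomial $\sotimes_{i\ne i_0}M_i^{\tens a_i}$ (not involving $M_{i_0}$), whereas the left side still carries a factor $M_{i_0}^{\tens(\nu_{i_0}-a_{i_0})}$ with $\nu_{i_0}-a_{i_0}>0$. To push the peeling one more step would require a simple $Y\in\uqm$ whose class equals $\prod_{i\ne i_0}[M_i]^{a_i}/[M_{i_0}]$; but $[M_{i_0}]$ is an irreducible (indeed prime, as a cluster variable) element of $A$ algebraically independent of $\{[M_j]\}_{j\ne i_0}$, so it cannot divide $\prod_{i\ne i_0}[M_i]^{a_i}$ in $A$, contradicting $[Y]\in A$.

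The hard part is the last paragraph: the basis argument alone only yields $X$ as a Laurent monomial in the $[M_i]$, and the delicate point is to promote this to non-negativity of exponents. The key inputs are the primeness of initial cluster variables in $A$ (a general feature of cluster algebras) combined with Theorem~\ref{thm:commcv}, which converts the algebraic divisibility into a genuine tensor factorization. Granting the positivity $a_i\ge\nu_i$, we conclude $X\simeq\sotimes_{i\in\K}M_i^{\tens(a_i-\nu_i)}$ with $(a_i-\nu_i)\in\Z_{\ge 0}^{\oplus\K}$, which is exactly condition (b) of Definition~\ref{def: max com}. Since (a) is immediate from the hypothesis that $\seed$ is a monoidal seed, the family $\{M_i\}_{i\in\K}$ is a maximal real commuting family in $\shc$.
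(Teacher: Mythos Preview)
Your proof is correct and follows essentially the same strategy as the quiver Hecke algebra argument in \cite[Theorem~2.4]{KK18} that the paper cites: clear denominators via the Laurent phenomenon to obtain a simple $Z$ whose class is a polynomial in the $[M_i]$, use the $\Z$-linear independence of isomorphism classes of simples to force $[Z]$ to be a single monomial, and then invoke primeness of cluster variables to upgrade the resulting Laurent monomial expression for $[X]$ to one with non-negative exponents. One small remark: in your step~3 the appeal to Theorem~\ref{thm:commcv} is not really needed---once you know $[X]\prod_i[M_i]^{\nu_i}=\prod_i[M_i]^{a_i}$ in $A=K(\shc)$, the hypothesis $\nu_{i_0}>a_{i_0}$ already forces $[M_{i_0}]$ to divide $\prod_{i\ne i_0}[M_i]^{a_i}$ in $A$, and primeness together with the pairwise non-associateness of the $[M_i]$ gives the contradiction directly. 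Framing this as ``peeling'' via Theorem~\ref{thm:commcv} is harmless but introduces a small ambiguity about whether the peeled module $Y$ lies in $\shc$ rather than just $\uqm$; working entirely inside the ring $A$ avoids that issue.
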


After the introduction of new invariants for pairs of modules in $\uqm$,
the following theorem can be proved similarly
to the one in \cite[Theorem 7.1.3]{KKKO18} with a small modification.
Since it is one of the principal results of this paper, we repeat its proof.

\begin{theorem}\label{th:main}
Let $\seed=(\{M_i\}_{i\in K},\widetilde B)$
be a $\Uplambda$-admissible monoidal seed  in $\shc$, and set
$$[\seed]_\Uplambda\seteq(\{ [M_i]\}_{i\in\K}, -\Lambda^\seed,\widetilde B).$$
We assume that
\begin{align}\label{eq:Cluster}
\hs{-15ex}\text{the algebra $K(\shc)$ is isomorphic to $\mathscr{A}([\seed]_\Uplambda)$}.
\end{align}
Then, for each $x\in\Kex$, the monoidal seed
$\mu_x(\seed)$
 is $\Uplambda$-admissible in $\shc$.
\end{theorem}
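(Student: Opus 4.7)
The plan is to follow the structure of \cite[Theorem 7.1.3]{KKKO18}, substituting the new invariants developed in Sections~\ref{sec: New invs}--\ref{sec: Further properties} for their quiver-Hecke counterparts. I will write $\mu_x(\seed) = (\{N_i\}_{i\in \K}, \widetilde B')$ with $N_i = M_i$ for $i\neq x$, $N_x = M_x'$, and $\widetilde B' = \mu_x(\widetilde B)$. By the lemma preceding the theorem, $[\mu_x(\seed)]_\Uplambda = \mu_x([\seed]_\Uplambda)$ is already a $\Uplambda$-seed of $\mathscr{A}([\seed]_\Uplambda)$, so $(-\Lambda^{\mu_x(\seed)}, \widetilde B')$ is compatible with $2$. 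Therefore it suffices to verify admissibility: for every $k \in \Kex$ I must produce a real simple $N_k' \in \shc$ fitting in the short exact sequence~\eqref{eq:ses mutation} prescribed by $\widetilde B'$ and strongly commuting with all $N_i$, $i \neq k$.

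The case $k = x$ will be immediate: take $N_x' := M_x$. Since $\mu_x(b)_{ix} = -b_{ix}$, the required sequence is precisely the ``second'' short exact sequence in the definition of ``$\seed$ admits a mutation in direction $x$'', and strong commutation of $M_x$ with $\{M_i\}_{i\neq x}$ holds because $\seed$ is a monoidal seed.

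The substantial case is $k \in \Kex \setminus \{x\}$. Under the isomorphism $K(\shc) \cong \mathscr{A}([\seed]_\Uplambda)$, the cluster mutation of $\mu_x([\seed]_\Uplambda)$ at $k$ singles out an element $y_k \in K(\shc)$ satisfying $[N_k]\cdot y_k = [A^+_k] + [A^-_k]$, where $A_k^{\pm} := \sotimes_{\pm b_{ik}' > 0} N_i^{\otimes |b_{ik}'|}$. My strategy will be to rewrite $y_k$, using the two exchange identities for $\seed$ at $k$ (with cluster variable module $M_k'$) and at $x$ (with $M_x'$), as $[M]\cdot \phi$ for some real simple module $M$ already available in our setup, and then to apply Theorem~\ref{thm:commcv} (iteratively, if needed) to realize $\phi$, and hence $y_k$, as the class of a genuine simple module $N_k'$. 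Once $N_k'$ is constructed, the compatibility $\de(N_k,N_k')=1$---which follows from $(-\Lambda^{\mu_x(\seed)}\widetilde B')_{kk}=2$ via the lemma preceding the theorem---combined with Proposition~\ref{Prop: l2}, will force $N_k \otimes N_k'$ to have length two; the Grothendieck identity will then identify its head and socle with $A_k^\mp$, producing the required exact sequence. Reality of $N_k'$ will follow from Proposition~\ref{prop:real}, and strong commutation of $N_k'$ with each $N_j$ ($j\neq k$) from $\de(N_j, N_k') = 0$ via Corollary~\ref{cor:commde}, the vanishing being guaranteed by $(\Lambda^{\mu_x(\seed)}\widetilde B')_{jk} = 0$.

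The main obstacle will be the invocation of Theorem~\ref{thm:commcv} to pass from the algebraically prescribed $y_k \in K(\shc)$ to an honest simple module: one has to exhibit a real simple divisor on both monomials of the exchange identity, and the argument splits into subcases according to the signs of $b_{ix}$ and $b_{xk}$. The bookkeeping is inherited from the analysis in \cite[\S 7]{KKKO18}; the essential new content is that Sections~\ref{sec: New invs}--\ref{sec: Further properties} put at our disposal the quantum affine counterparts of the quiver-Hecke statements used there---notably Proposition~\ref{pro:subquotient}, Proposition~\ref{prop:real}, Proposition~\ref{Prop: l2}, Corollary~\ref{cor:commde}, Theorem~\ref{thm: KKKo15 main}, and Theorem~\ref{thm:commcv}---which together will let the entire KKKO argument be transplanted into $\shc \subset \uqm$.
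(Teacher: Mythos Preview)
Your overall strategy---transplant the KKKO argument using the new invariants---is exactly what the paper does, and the easy cases ($k=x$; $k\neq x$ with $b_{xk}=0$, which you omit but should record) are handled as you say. However, two steps in the nontrivial case are not justified as written.

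\textbf{Circularity in the $\de$-computations.} You propose to deduce $\de(N_k,N_k')=1$ and $\de(N_j,N_k')=0$ directly from the compatibility $(\Lambda^{\mu_x(\seed)}\widetilde B')_{jk}=-2\delta_{jk}$. But the bridge between compatibility and $\de(N_j,N_k')$ is Proposition~\ref{prop:condition simplified}, whose proof uses the short exact sequence~\eqref{eq:ses mutation} for $\mu_x(\seed)$ at $k$---precisely what you are trying to establish. The paper avoids this loop: once $[N_k][M_y'']=[A_k^+]+[A_k^-]$ is known with $A_k^\pm$ simple and non-isomorphic, length two is automatic; the direction of the sequence is then fixed by comparing $\La(M_y,A_k^-)-\La(M_y,A_k^+)=-\sum_i\La(N_y,N_i)b'_{iy}=2>0$ (this uses only compatibility of $\mu_x(\seed)$, already known) together with Theorem~\ref{th:leclerc}/Corollary~\ref{cor:compest}. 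For strong commutation with $N_j$ ($j\neq k$), the paper does not compute $\de(N_j,M_y'')$ at all: it argues that if $M_y''\otimes N_j$ had length two, the resulting factorisation in $K(\shc)$ would force the prime cluster variable $[M_y]$ to divide one of two coprime monomials, a contradiction.

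\textbf{The construction of $N_k'$ is where the real work hides.} Saying ``rewrite $y_k$ as $[M]\cdot\phi$ and apply Theorem~\ref{thm:commcv}'' skips the heart of the proof. The paper produces an actual simple module $L\hconv M_y'$ with $L=(M_x')^{\otimes b_{xy}}$, and shows via an R-matrix diagram that $\Im(\rmat{L,\,M_y\otimes M_y'})\simeq M_y\otimes(L\hconv M_y')$ sits in an exact sequence whose outer terms are $(B\otimes P)\otimes A$ and $(L\otimes Q)\otimes A$. The key technical step is the equality $\La(M_x',X)=\La(M_x',Y)$ (obtained from compatibility of $\mu_x(\seed)$), which via Proposition~\ref{prop:inequalities}\,\eqref{it:commr} makes the relevant square of R-matrices commute; computing $\Im(\rmat{L,X})$ then uses Lemma~\ref{lem:MN}. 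Only after this does one get $[L\hconv M_y']=\phi\cdot[A]$ and invoke Theorem~\ref{thm:commcv} with the real simple $A$. Your outline should make this construction explicit; without it, there is no candidate module on which to run Theorem~\ref{thm:commcv}.
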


\begin{proof}
Set $N_i\seteq\mu_x(M)_i$ and $b'_{ij}\seteq \mu_x(\tB)_{ij}$ for $i \in \K$ and $j \in \Kex$, i.e.
$$\mu_x(\seed)=\bl\st{N_i}_{i\in \K}, (b'_{i,j})_{(i,j)\in\K\times\Kex}\br.$$
By Definition~\ref{def:admissible}, it is enough to show that,
for any $y\in\Kex$,
there exists a  simple module $M_y'' \in \shc$ such that
there is a short exact sequence
\begin{align}\label{eq:seqdes}
\xymatrix{
0 \ar[r] &  \sotimes_{b'_{iy} > 0} N_i^{\tens b'_{iy}} \ar[r] & N_y \tens  M_y'' \ar[r] & \sotimes_{b'_{iy} <0} N_i^{\tens (-b'_{iy})}   \ar[r] & 0
}
\end{align}
and
$$\de(N_i,M_y'')=0 \quad \text{for $i \neq y$.}$$

If $x=y$, then $b'_{iy}=-b_{ix}$ and hence $M''_y=M_x$ satisfies the desired condition.

Assume that $x\not=y$ and $b_{xy}=0$. Then $b'_{iy} = b_{iy}$ for any $i$
and $N_i=M_i$ for any $i\not=x$.
Hence  $M_y''=\mu_y(M)_y$ satisfies the desired condition.

\medskip
We will show the assertion in the case $b_{xy} > 0$.
We omit the proof of the case $b_{xy} <0$ because it
can be shown in a similar way.

Recall that we have
\begin{align} \label{eq:bxy>0 mutation}
b_{iy}'=\begin{cases}
b_{iy}+b_{ix}b_{xy} & \text{if } b_{ix} >0, \\
b_{iy} & \text{if } b_{ix} \le 0
\end{cases}
\end{align}
for $i \in \K$ different from $x$ and $y$.

Set
\begin{align*}
&M_x'\seteq\mu_x(M)_x, \hs{3ex} M_y'\seteq\mu_y(M)_y, \\
&C\seteq\sotimes_{b_{ix} >0} M_i^{\tens b_{ix}}, \quad  S\seteq\sotimes_{b_{ix} <0, \ i\neq y} M_i^{\tens -b_{ix}}, \\
&P\seteq\sotimes_{b_{iy} >0, i\neq x} M_i^{\tens b_{iy}}, \quad  Q:=\sotimes _{b_{iy}' <0, \ i\neq x} M_i^{\tens-b_{iy}'}, \\
&A\seteq\sotimes_{b_{iy}' \le 0, \ b_{ix} >0} M_i^{\tens b_{ix}b_{xy}} \tens \sotimes_{\substack{b_{iy} <0,\; b_{iy}' >0,\;b_{ix} >0}} M_i^{\tens -b_{iy}}
 \\*
&\hspace{3ex}\simeq
\sotimes_{\substack{b_{iy} <0,\; b_{ix} >0}}
M_i^{\tens \min(b_{ix}b_{xy},-b_{iy})},\\
&B\seteq\sotimes_{ b_{iy} \ge 0,  \ b_{ix} >0} M_i^{\tens b_{ix}b_{xy}} \tens \sotimes_{\substack{b_{iy}' >0, \; b_{iy} <0,\;b_{ix} >0}} M_i^{\tens b_{iy}'}.
\end{align*}

Set
 \eqn
&&L\seteq(M_x')^{\tens b_{xy}},   \quad
V\seteq M_x^{\tens b_{xy}}
\eneqn
 and
\eqn
X \seteq\sotimes_{b_{iy} >0} M_i^{\tens b_{iy}}\simeq
M_x^{\tens b_{xy}} \tens P =V \tens P,
 \hs{6ex} Y\seteq\sotimes_{b_{iy} <0} M_i^{\tens-b_{iy}}\simeq Q \tens A.
\eneqn

Then \eqref{eq:seqdes} reads  as
\eq
\xymatrix{
0 \ar[r] &   B \tens P \ar[r]  & M_y \tens M''_y \ar[r] & L  \tens Q\ar[r] & 0.
}\label{eq:seqdes'}
\eneq

Note that we have
\eq
&&0 \to C  \to M_x \tens M_x' \to  M_y^{\tens b_{xy}} \tens S \to 0,  \\
&&0 \to X \to  M_y \tens M_y' \to  Y \to 0. \label{eq:sesMy}
\eneq

Taking the tensor products of $L=(M_x')^{\tens b_{xy}}$  and \eqref{eq:sesMy}, we obtain
\eqn
&&
\xymatrix@R=.5ex{0 \ar[r]& L \tens X  \ar[r]&
  L \tens (M_y \tens M_y')\ar[r]& L \tens Y \ar[r]& 0, \\
0\ar[r]&  X \tens L \ar[r]&
(M_y \tens M_y') \tens L \ar[r]&  Y\tens L \ar[r]& 0.}
\eneqn

Since $L$ commutes with  $M_y$,
we have
\eqn
&&\Lambda(L, Y)=\Lambda(L, M_y \hconv M_y') \\
 &&=\Lambda(L, M_y) +\Lambda(L, M_y') = \Lambda(L,M_y \tens M_y' ).
\eneqn

On the other hand, we have
\begin{align*}
&\Lambda(M_x',X) - \Lambda(M_x',Y)  \allowdisplaybreaks\\
&=\Lambda(M_x',\sotimes_{b_{iy} >0} M_i^{\tens b_{iy}})-\Lambda(M_x',\sotimes_{b_{iy} <0} M_i^{\tens -b_{iy}})  \allowdisplaybreaks\\
&=\sum_{b_{iy} >0} \Lambda(M_x',M_i)b_{iy} -\sum_{b_{iy} <0} \Lambda(M_x',M_i)(-b_{iy})  \allowdisplaybreaks\\
&=\sum_{i\in\K} \Lambda(M_x', M_i)b_{iy}
=\sum_{i \neq x} \Lambda(M_x',M_i)b_{iy} + \Lambda(M_x',M_x)b_{xy} \allowdisplaybreaks\\
&=\sum_{i \neq x} \Lambda(M_x', M_i)(b_{iy}' -\delta(b_{ix} >0)b_{ix} b_{xy}) + \Lambda(M_x',M_x)b_{xy} \allowdisplaybreaks\\
&=\sum_{i \neq x} \Lambda(M_x', M_i) b_{iy}'
-\sum_{b_{ix} >0} \Lambda(M_x', M_i)b_{ix}b_{xy} + \Lambda(M_x',M_x)b_{xy} \allowdisplaybreaks\\
&\mathop=\limits_{\mathrm(\star)}0-\Lambda(M_x',\sotimes_{b_{ix} >0} M_i^{\tens b_{ix}})b_{xy} + \Lambda(M_x',M_x)b_{xy} \allowdisplaybreaks\\
&=\bl-\Lambda(M_x',\sotimes_{b_{ix} >0} M_i^{\tens b_{ix}} ) + \Lambda(M_x',M_x) \br b_{xy} \allowdisplaybreaks\\
&=(-\Lambda(M_x', M_x' \hconv M_x) + \Lambda(M_x',M_x) )b_{xy} \allowdisplaybreaks\\
&=(-\Lambda(M_x', M_x')-\Lambda(M_x', M_x) + \Lambda(M_x',M_x) )b_{xy}=0.
\end{align*}
Note that we have used the compatibility of
$\bl \Lambda(\mu_x(M)_i,\mu(M)_j)\br_{i,j}$ and $\mu_x(B)$
when we derive the equality ($\star$).

Since $L=(M'_x)^{\tens b_{xy}}$, the equality $\La(M'_x,X)=\La(M'_x,Y)$ implies
\eqn
&& \Lambda(L,X)
=\Lambda(L,Y)=\La(L, M_y \tens M_y'). \label{eq:LaLX=LaLY}
\eneqn
Hence  the following  diagram is commutative by Proposition \ref{prop:inequalities}~\eqref{it:commr}:
\eqn \label{eq:commutative ses}
&&\ba{c}\xymatrix{
0 \ar[r] &  L \tens X \ar[r] \ar[d]^{\rmat{L,X}} & L \tens (M_y \tens M_y')
 \ar[d]^{\rmat{L,M_y \tens M_y'}} \ar[r] &  L \tens Y \ar[r] \ar[d]_{\rmat{L,Y}} ^{\bwr}&0\\
0 \ar[r] &  X \tens L \ar[r]  &  (M_y \tens M_y') \tens L \ar[r] & \, Y \tens L \ar[r] & 0.
}\ea
\eneqn
Note that  since $L=(M_x')^{\tens b_{xy}}$ commutes with $Q$ and $A$, $\rmat{L,Y}$ is an isomorphism.  Hence
we have
$$\Im(\rmat{L,Y}) \simeq L \tens Y.$$
 Therefore we obtain  an exact sequence
\eq
\xymatrix{0\ar[r]&\Im(\rmat{L,X})\ar[r]&\Im(\rmat{L, M_y\tens
 M'_y})\ar[r]
&L\circ Y\ar[r]&0.}\label{eq:exIm}
\eneq

%%--------------------------------------------------------------------
On the other hand, $\rmat{L, M_y \tens M_y'}$ decomposes by Proposition~\ref{prop:rcomp} as follows:
\eqn
\xymatrix@C=6em{
L \tens M_y \tens M_y'  \ar[r]^\sim_{\rmat{L,M_y} \tens M_y'}
\ar@/^2pc/ [rr]^{\rmat{L, M_y \tens M_y'}}&
 M_y \tens L \tens M_y'  \ar[r]_{M_y \tens \rmat{L,M_y'}} &
 M_y  \tens M_y' \tens L.
 }
 \eneqn
Since $L=(M_x')^{\tens b_{xy}}$ commutes with $M_y$,
the homomorphisms $\rmat{L,M_y} \tens M_y'$ is an isomorphism and hence we have
\eqn
{\rm Im}
 (\rmat{L, M_y \tens M_y'}) \simeq  M_y \tens (L\hconv M'_y).
 \eneqn

Similarly, $\rmat{L, X}$ decomposes as follows:
\eqn
\xymatrix@C=6em{
L \tens V \tens P  \ar[r]_{\rmat{L,V} \tens  P}
\ar@/^1.5pc/ [rr]^{\rmat{L,X}}&
 V \tens L \tens P  \ar[r]_{ V\tens \rmat{L,P}}^\sim &
V \tens P  \tens L.
 }
 \eneqn
Since $L$ commutes with $P$,
the  homomorphism  $V \tens \rmat{L,P} $ is an isomorphism and hence we have
  \eqn
{\rm Im}
 (\rmat{L,X})
  \simeq (L \hconv V) \tens P
\simeq \bl(M_x')^{\tens b_{xy}} \hconv M_x^{\tens b_{xy}}\br \tens P.
 \eneqn
On the other hand,
Lemma~\ref{lem:MN} implies that
\eqn
(M_x')^{\tens b_{xy}} \hconv M_x^{\tens b_{xy}}
\simeq (M_x' \hconv M_x)^{\tens b_{xy}}
\simeq  C^{\tens b_{xy}}
\simeq B \tens A,
\eneqn
and hence we obtain
  \eqn
{\rm Im}
 (\rmat{L,X})
\simeq (B \tens P)  \tens A.
\eneqn
Thus the exact sequence \eqref{eq:exIm} becomes the exact sequence in $\shc$:
\eq
&&\xymatrix{
0 \ar[r] &  (B \tens P) \tens A \ar[r]  & M_y \tens (L\hconv M_y') \ar[r] & (L  \tens Q) \tens A \ar[r] & 0.
}\label{eq:exML}
\eneq

%----------------------------------------------------

Thus we obtain the identity in $K(\uqm)$:
\eqn
[M_y]\, [L \hconv M_y'] = \bl\, [B \tens P] + [L \tens Q]\,\br\, [A].
\eneqn

On the other hand,  the hypothesis \eqref{eq:Cluster} implies that
there exists $\phi \in K(\shc)$
corresponding to $\mu_y\mu_x([M])$ so that
it satisfies
\eq
[M_y] \phi =  [B \tens P] + [L \tens Q].
\label{eq:Mphi}
\eneq

Hence, in $K(\shc)$, we have
\eqn
[M_y] \phi [A] = \bl  [B \tens P] + [L \tens Q]\br [A ]=  [M_y][L \tens M_y'].
\eneqn
Since $K(\shc)$ is an integral domain, we conclude that
\eqn
\phi [A] = [L \hconv M_y'].
\eneqn

By Theorem~\ref{thm:commcv}, there exists a simple module $M_y''$ such that $\phi=[M_y'']$, since $A$ is real simple.

Now \eqref{eq:Mphi}  implies
$$[M_y\tens M''_y] = [B \tens P] + [L \tens Q].$$
Hence there exists an exact sequence
$$0\To W\To\;M_y\tens M''_y\To Z\To 0,$$
where $W=B \tens P$ and $Z=L \tens Q$ or
$W=L \tens Q$ and $Z=B \tens P$.

Since $   \La(M_y,L\otimes Q) -  \La(M_y,B\otimes P) = -\displaystyle \sum_{b'_{iy}} \La(M_y,M_i)b'_{iy} =2\de(M_y,M_y'') >0$, we have
$$0\To B\tens P\To\;M_y\tens M''_y\To L\tens Q \To 0,$$
by Corollary~\ref{cor:compest}.

Now it remains to prove that
\bna
\item $M_y''$ strongly commutes with $M_i$ $(i \ne x,y)$ and $M_x'$,
\item $M_y''$ is real simple.
\ee
 Take $M$ as one of $M_i$ $(i \ne x,y)$ and $M_x'$.
Then $M_y'' \tens M$ is of length less than or equal to $2$, since $M_y \otimes M_y'' \otimes M$ is of length 2:
$$0\To H \tens M \To\;M_y\tens M''_y\tens M  \To G\tens M  \To 0,$$
where $H \seteq B \tens Q\simeq\sotimes_{b'_{iy}>0}M_i^{\tens b'_{iy}}$ and $G \seteq L \tens Q$.

Assume that $M_y'' \tens M$ is of length $2$:
\begin{align}\label{eq: length 2 assumption}
0\To U  \To  M''_y\tens M  \To V  \To 0.
\end{align}
By taking tensor $M_y \otimes$ to~\eqref{eq: length 2 assumption},  we have
$$  [M_y][U] =[H][M]. $$
Note that $[H \otimes M]=[\sotimes_{b'_{iy}>0}M_i^{\tens b'_{iy}}] [M]$.
Since $K(\shc)$ has a cluster algebra structure, the cluster variables $[M_y]$ and $[M]$ are prime (\cite{GLS13}). However, $[M_y]$ does not divide
either $[H]$ nor $[M]$ which contradicts $[M_y][U] =[H][M]$.
Thus we obtain (a).

By (a), $M_y''$ strongly commute with  $L \otimes Q$ and  $B \otimes P$, and hence $M_y''$ is  real simple by Proposition~\ref{prop:real}.
It completes the proof of Theorem~\ref{th;main}.
\end{proof}

\begin{corollary}\label{cor:main}
Let  $\seed=(\{M_i\}_{i\in\K},\widetilde B)$ be a $\Uplambda$-admissible
monoidal seed in $\shc$.
Under the assumption \eqref{eq:Cluster},
$\shc$ is a $\Uplambda$-monoidal categorification of the cluster algebra
$\mathscr A([\seed]_\Uplambda)$.
Furthermore,  the following statements hold{\rm:}
\bnum
  \item The  monoidal seed $\seed=(\{M_i\}_{i\in\K},\widetilde B)$   admits successive mutations in all directions.
  \item Any cluster monomial in $K(\shc)$  is the isomorphism class of a  real simple object in $\shc$.
  \item Any cluster monomial in $K(\shc)$ is a Laurent polynomial of the initial cluster variables with coefficient in $\Z_{\ge0}$.
\item For $k\in\Kex$ and the $k$-th cluster variable module $\widetilde{M}_k$ of a monoidal seed $\widetilde{\seed}$ obtained by successive mutations from the initial monoidal seed
$\seed$,  we have
$$\de(\widetilde{M}_k,\widetilde{M}_k')=1.$$
Here $\widetilde{M}_k'$ is the $k$-th cluster variable module of $\mu_k(\widetilde{\seed})$.
\item  Any monoidal cluster $\{\widetilde{M}_i\}_{i\in\K}$ is a maximal real commuting family.
\end{enumerate}

\end{corollary}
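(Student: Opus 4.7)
The plan is to deduce every part of Corollary~\ref{cor:main} from Theorem~\ref{th:main} by induction, combined with Proposition~\ref{prop:condition simplified} and Theorem~\ref{th;main}; no new representation--theoretic input is needed.

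First I would prove (i) and (iv) simultaneously by induction on the length of the mutation sequence. The initial seed $\seed$ is $\Uplambda$-admissible by hypothesis. Theorem~\ref{th:main} guarantees that $\mu_x(\seed)$ is again $\Uplambda$-admissible for every $x\in\Kex$, while Proposition~\ref{prop:condition simplified}(i) ensures that a $\Uplambda$-admissible seed actually admits a mutation in each direction. Iterating, every seed $\widetilde\seed$ reached from $\seed$ by a finite sequence of mutations is $\Uplambda$-admissible, which establishes (i). For (iv), the $\Uplambda$-admissibility condition $(\Lambda^{\widetilde\seed}\widetilde{B}')_{jk}=-2\delta_{jk}$ combined with Proposition~\ref{prop:condition simplified}(ii) at $j=k$ yields $\de(\widetilde{M}_k,\widetilde{M}_k')=1$.

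Next, the main claim that $\shc$ is a $\Uplambda$-monoidal categorification of $\mathscr{A}([\seed]_\Uplambda)$ is immediate from (i) together with the hypothesis \eqref{eq:Cluster}. For (ii), I would observe that every cluster of $\mathscr{A}([\seed]_\Uplambda)$ is realised by a monoidal cluster $\{\widetilde{M}_i\}_{i\in\K}$ coming from a $\Uplambda$-admissible seed produced in (i); since $\{\widetilde{M}_i\}_{i\in\K}$ is a strongly commuting family of real simple modules, any cluster monomial $\prod_i [\widetilde{M}_i]^{a_i}$ equals the class $\bigl[\sotimes_i \widetilde{M}_i^{\tens a_i}\bigr]$ of a real simple object in $\shc$. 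Part (iii) then follows by combining (ii) with the Laurent phenomenon of Fomin--Zelevinsky: a cluster monomial $[X]$ with $X$ real simple expands as a Laurent polynomial in the initial cluster variables $\{[M_i]\}_{i\in\K}$, and the coefficients are forced to be non-negative integers because each Laurent monomial is itself the class of a real simple module in $\shc$, hence the decomposition of $[X]$ in the $\Z$-basis of isomorphism classes of simple modules of $K(\shc)$ has non-negative coefficients.

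Finally, for (v), I would apply Theorem~\ref{th;main} directly: since $\shc$ is a monoidal categorification of $\mathscr{A}([\seed]_\Uplambda)$ and any monoidal cluster $\{\widetilde{M}_i\}_{i\in\K}$ induces a $\Uplambda$-seed of $\mathscr{A}([\seed]_\Uplambda)$, the family $\{\widetilde{M}_i\}_{i\in\K}$ is a maximal real commuting family in $\shc$. The main obstacle has already been surmounted in Theorem~\ref{th:main}, namely the step of showing that mutation preserves $\Uplambda$-admissibility; what remains is inductive bookkeeping plus an appeal to the Laurent phenomenon, and the mild subtlety in (iii) of reconciling Laurent expansion with the decomposition into simples is standard once (ii) is in hand.
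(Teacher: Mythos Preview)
Your proposal is correct and follows the natural route the paper intends: the corollary is stated without proof precisely because it is meant to follow from Theorem~\ref{th:main} by the inductive bookkeeping you describe, together with Proposition~\ref{prop:condition simplified} and Theorem~\ref{th;main}.

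One small imprecision in your argument for (iii): a Laurent monomial with negative exponents is not literally the class of a module in $\shc$, so the sentence ``each Laurent monomial is itself the class of a real simple module'' needs the standard clearing-of-denominators step. Multiply both sides of the Laurent expansion of $[X]$ by a high enough monomial $\prod_i[M_i]^{n_i}$ so that all exponents become non-negative; then the left side is the class of the module $X\otimes\sotimes_iM_i^{\otimes n_i}$, while the right side is an integer combination of classes of pairwise non-isomorphic real simple modules $\sotimes_iM_i^{\otimes b_i}$. Since isomorphism classes of simples form a $\Z$-basis of $K(\shc)$, the coefficients must be non-negative. With this adjustment your argument is complete.
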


\Rem
In \cite{KKO18,KKOP19B}, we constructed several examples of monoidal subcategories $\shc$ of $\uqm$ such that $\shc$ is a $\Uplambda$-monoidal categorification of $K(\shc)$. In those papers, we employed the method of generalized Schur-Weyl duality functors. Namely, we constructed a monoidal functor $\F\col\shc_{\QHA}\to\shc$
such that
\bna
\item $\shc_\QHA$ is a certain monoidal category related with a quiver Hecke algebra,
\item $\shc_\QHA$ is a monoidal categorification of the cluster algebra $K(\shc_\QHA)$,
\item $\F$ induces an isomorphism $K(\shc_{\QHA})\isoto K(\shc)$.
\ee

Further examples of monoidal categorifications obtained from
Theorem~\ref{th:main} will be given in a forthcoming paper.
\enrem

\end{document}